%
\documentclass[12pt, reqno]{amsart}
\usepackage{amsmath, amsthm, amscd, amsfonts, amssymb, graphicx, color}
\usepackage[bookmarksnumbered, colorlinks, plainpages]{hyperref}
\usepackage{mathrsfs}
\usepackage{epsfig}
\usepackage{makecell}
\usepackage{float}
\usepackage{epstopdf}
\usepackage{subfigure}
\usepackage{mathbbol}
\usepackage{pifont}

\hypersetup{colorlinks=true,linkcolor=red,
anchorcolor=green, citecolor=cyan, urlcolor=red,
filecolor=magenta, pdftoolbar=true}

\textheight 22.5truecm \textwidth 14.5truecm
\setlength{\oddsidemargin}{0.35in}\setlength{\evensidemargin}{0.35in}

\setlength{\topmargin}{-.5cm}

\newtheorem{theorem}{Theorem}[section]
\newtheorem{lemma}[theorem]{Lemma}
\newtheorem{proposition}[theorem]{Proposition}
\newtheorem{corollary}[theorem]{Corollary}
\theoremstyle{definition}

\newtheorem{example}[theorem]{Example}

\newtheorem{procedure}[theorem]{Procedure}

\theoremstyle{remark}
\newtheorem{remark}[theorem]{Remark}
\numberwithin{equation}{section}

\newcommand{\B}{\boldsymbol B}

\newcommand{\bq}{\begin{equation}}
\newcommand{\eq}{\end{equation}}
\newcommand{\beqn}{\begin{eqnarray*}}
\newcommand{\eeqn}{\end{eqnarray*}}
\newcommand{\beq}{\begin{eqnarray}}
\newcommand{\eeq}{\end{eqnarray}}

\newcommand{\rar}{\rightarrow}

\newcommand{\bc}{\begin{centre}}
\newcommand{\ec}{\end{centre}}

\newcommand{\ba}{\begin{array}}
\newcommand{\ea}{\end{array}}

\def \C{\mathbb{C}}

\newcommand*{\borel}[1]{{\mathfrak B}(#1)}
\newcommand*{\card}[1]{\mathrm{card}\,#1}
\newcommand*{\child}[1]{\mathsf{Chi}(#1)}

\newcommand*{\childn}[2]{{\mathsf{Chi}}^{\langle#1\rangle}(#2)}

\newcommand*{\des}[1]{{{\mathsf{Des}}(#1)}}

\newcommand*{\Ge}{\geqslant}
\newcommand*{\hh}{\mathcal{H}}
\newcommand*{\kk}{\mathcal{K}}
\newcommand*{\lambdab}{\boldsymbol\lambda}
\newcommand*{\Le}{\leqslant}
\newcommand*{\nbb}{\mathbb N}
\newcommand*{\parent}[1]{\mathsf{par}(#1)}

\newcommand*{\ran}{\mathrm{ran\,}}
\newcommand*{\rbb}{\mathbb R}
\newcommand*{\rot}{\mathsf{\omega}}
\newcommand*{\slam}{S_{\lambdab}}

\newcommand*{\supp}[1]{\mathrm{supp}(#1)}
\newcommand*{\tcal}{\mathscr T}
\newcommand*{\zbb}{\mathbb Z}

\begin{document}

\setcounter{page}{1}

\title[Complete systems of unitary invariants for $2$-isometries]
{Complete systems of unitary invariants for \\ some
classes of $2$-isometries}

\author[A. Anand, S. Chavan, Z.\ J.\ Jab{\l}o\'nski, \MakeLowercase{and} J. Stochel]
{Akash Anand,$^1$ Sameer Chavan,$^1$ Zenon Jan
Jab{\l}o\'nski,$^2$ \\ \MakeLowercase{and} Jan
Stochel$^{2*}$}

\address{$^{1}$Department of Mathematics and Statistics\\
Indian Institute of Technology Kanpur, India.}
\email{\textcolor[rgb]{0.00,0.00,0.84}{akasha@iitk.ac.in;
chavan@iitk.ac.in}}
\address{$^{2}$Instytut Matematyki, Uniwersytet Jagiello\'nski,
ul.\ \L ojasiewicza 6, PL-30348 Kra\-k\'ow, Poland.}
\email{\textcolor[rgb]{0.00,0.00,0.84}{Zenon.Jablonski@im.uj.edu.pl;
Jan.Stochel@im.uj.edu.pl}}

\dedicatory{Dedicated to the memory of Professor
Ronald G. Douglas}



\subjclass[2010]{Primary 47B20, 47B37; Secondary
47B49.}

\keywords{$2$-isometry, kernel condition, complete
system of unitary invariants, weighted shift on a
directed tree, Cauchy dual operator, $C_{0 \cdot}$ and
$C_{\cdot 0}$ classes.}

\begin{abstract}
The unitary equivalence of $2$-isometric operators
satisfying the so-called kernel condition is
characterized. It relies on a model for such operators
built on operator valued unilateral weighted shifts
and on a characterization of the unitary equivalence
of operator valued unilateral weighted shifts in a
fairly general context. A complete system of unitary
invariants for $2$-isometric weighted shifts on rooted
directed trees satisfying the kernel condition is
provided. It is formulated purely in the langauge of
graph-theory, namely in terms of certain generation
branching degrees. The membership of the Cauchy dual
operators of $2$-isometries in classes $C_{0 \cdot}$
and $C_{\cdot 0}$ is also studied.
\end{abstract}

\maketitle

   \section{Introduction}
We begin by defining the basic concepts discussed in
this paper. Let $\hh$ be a (complex) Hilbert space and
$\B(\hh)$ stand for the $C^*$-algebra of all bounded
linear operators on $\hh$. We say that an operator
$T\in \B(\hh)$ is
   \begin{enumerate}
   \item[$\bullet$] {\em hyponormal} if $T^*T-TT^* \Ge 0,$
   \item[$\bullet$] {\em subnormal} if it
has a normal extension in a possibly larger Hilbert
space,
   \item[$\bullet$] {\em $2$-hyperexpansive} if
$I - 2 T^*T + T^{*2}T^2 \Le 0,$
   \item[$\bullet$] {\em
$2$-isometric} if $I - 2 T^*T + T^{*2}T^2 =0.$
   \end{enumerate}
Subnormal operators are hyponormal (see
\cite[Proposition~ II.4.2]{Co}) and $2$-isome\-tries
are $2$-hyperexpansive, but none of these implications
can be reversed (see \cite[Exercise~ 3, p.\ 50]{Co}
and \cite[Lemma~ 6.1]{Ja-St}, respectively). Moreover,
hyponormal operators which are $2$-hyperexpansive are
isometric (see \cite[Theorem~ 3.4]{Ja-St}). The theory
of subnormal and hyponormal operators was initiated by
Halmos \cite{Hal-s}. The notion of a $2$-isometry was
invented by Agler \cite{Ag-0}, while the concept of a
$2$-hyperexpansive operator goes back to Richter
\cite{R-0} (see also \cite[Remark~ 2]{At}). The {\it
Cauchy dual operator} $T'$ of a left-invertible
operator $T$ is defined by $T'=T(T^*T)^{-1}.$ This
concept is due to Shimorin \cite{Sh}. The basic
relationship between $2$-hyperexpansions and
hyponormal operators via the Cauchy dual transform is
as follows (see \cite[Sect.\ 5]{Sh-1} and
\cite[Theorem~ 2.9]{Ch-0}).
   \begin{align} \label{2hypcon}
   \begin{minipage}{70ex}
{\em If $T\in \B(\hh)$ is a $2$-hyperexpansive
operator, then $T$ is left-invertible and $T'$ is a
hyponormal contraction.}
   \end{minipage}
   \end{align}

In a recent paper \cite{A-C-J-S}, the present authors
solved the Cauchy dual subnormality problem in the
negative by showing that there are $2$-isometric
operators $T$ whose Cauchy dual operators $T'$ are not
subnormal. One of the ideas of constructing such
counterexamples relies on perturbing the so-called
kernel condition in the context of weighted shifts on
directed trees (see \cite{JJS} for more information on
this class of operators). Recall from \cite{A-C-J-S}
that $T\in \B(\hh)$ satisfies the {\em kernel
condition}~ if
   \begin{align}  \label{kc}
T^*T (\ker T^*) \subseteq \ker T^*.
   \end{align}
It was proved in \cite[Theorem~ 6.5]{A-C-J-S} that if
$\tcal$ is a rooted directed tree and $\slam$ is a
$2$-isometric weighted shift on $\tcal$ with nonzero
weights which satisfies the perturbed kernel
condition, then the Cauchy dual operator $\slam'$ of
$\slam$ is subnormal if and only if $\slam$ satisfies
the kernel condition. Further, it was shown in
\cite[Theorem~ 3.3]{A-C-J-S} that the Cauchy dual
operator $T'$ of a $2$-isometry $T$ satisfying the
kernel condition is always subnormal. This can in turn
be derived from a model theorem for $2$-isometries
satisfying the kernel condition (see \cite[Theorem~
2.5]{A-C-J-S}). The model itself is built on operator
valued unilateral weighted shifts and is the starting
point of the present investigations. It is worth
mentioning that there are Dirichlet-type models for
cyclic analytic $2$-isometries and for finitely
multicyclic $2$-isometries given by Richter
\cite[Theorem~ 5.1]{R-1} and by Agler and Stankus
\cite[Theorem~ 3.49]{Ag-St}, respectively. Richter
used his model to characterize unitary equivalence of
cyclic analytic $2$-isometries (see \cite[Theorem~
5.2]{R-1}). As far as we know, there are no models for
arbitrary $2$-isometries.

The paper is organized as follows. In Section
\ref{Sec4}, looking for a complete system of unitary
invariants for $2$-isometries satisfying the kernel
condition, we first discuss the question of unitary
equivalence of operator valued unilateral weighted
shifts in the general context. This class of operators
was investigated by Lambert \cite{lam-1}. An essential
progress in their study, also relevant for our present
work, was done in \cite{Ja-3}. As opposed to the
previous approaches, our do not require the operator
weights to be even quasi-invertible. We only assume
that they have dense range. We provide a
characterization of unitary equivalence of such
operators (see Theorem~ \ref{przepluni}). Under some
carefully chosen constraints, we obtain a
characterization of their unitary equivalence (see
Theorem~ \ref{Fran4}), which resembles that for scalar
weighted shifts (cf.\ \cite[Theorem~ 1]{Shi}). We
conclude this section by characterizing the unitary
equivalence of orthogonal sums (of arbitrary
cardinality) of injective unilateral weighted shifts
(see Theorem~ \ref{desz1}). We want to draw the
reader's attention to \cite{Ba-Mi}, where the
so-called block shifts generalizing operator valued
unilateral weighted shifts were studied.

In Section \ref{Sec5}, using the model for
$2$-isometries satisfying the kernel condition (see
\cite[Theorem~ 2.5]{A-C-J-S}), we answer the question
of when two such operators are unitarily equivalent
(see Theorem~ \ref{fincyc} and Lemma~ \ref{unrown}).
We also answer the question of when a completely
non-unitary $2$-isometry satisfying the kernel
condition is unitarily equivalent to an orthogonal sum
of scalar unilateral weighted shifts (see Theorem~
\ref{fincyc2}). This enables us to show that each
finitely multicyclic completely non-unitary
$2$-isometry satisfying the kernel condition is a
finite orthogonal sum of weighted shifts (see
Corollary~ \ref{mulcyc}). As a consequence, the
adjoint of any such operator is in the Cowen-Douglas
class (see \cite[Corollary~ 3.7]{Cha2008} for a more
general result). We refer the reader to \cite{C-D} for
the definition of the Cowen-Douglas class.

In Section \ref{Sec9}, we investigate $2$-isometric
weighted shifts on directed trees satisfying the
condition \eqref{hypo+}, which in general is stronger
than the kernel condition. However, they coincide in
the case when the directed tree is leafless and the
weights of the weighted shift under consideration are
nonzero (see \cite[Lemma~ 5.6]{A-C-J-S}). Example~
\ref{obustr} shows that the fact that a weighted shift
on a rooted directed tree is completely non-unitary
(see \cite[Lemma~ 5.3(viii)]{A-C-J-S}) is no longer
true for weighted shifts on rootless directed trees
even though they are isometric and non-unitary.
Theorem~ \ref{2isscs-t} provides a model for
$2$-isometric weighted shifts on rooted directed trees
that satisfy the condition \eqref{hypo+}. These
operators are modelled by orthogonal sums of
inflations of unilateral weighted shifts whose weights
come from a single $2$-isometric unilateral weighted
shift. What is more, the additive exponent of the
$k$th inflation that appears in the orthogonal
decomposition \eqref{zenob} is equal to ${\mathfrak
j}^{\tcal}_k,$ the $k$th generation branching degree
of the underlying graph $\tcal$. This enables us to
answer the question of when two such operators are
unitarily equivalent by using ${\mathfrak
j}^{\tcal}_k$ (see Theorem~ \ref{equival}). We
conclude this section by showing that there are two
unitarily equivalent $2$-isometric weighted shifts on
non-graph isomorphic directed trees with nonzero
weights which satisfy the kernel condition (see
Example~ \ref{2+3}).

In Section \ref{Sec6}, we continue our investigations
of unitary invariants. We begin by calculating
explicitly another unitary invariant, namely the SOT
limit $\mathsf A_{T'}$ of the sequence
$\{T'^{*n}T'^{n}\}_{n=1}^{\infty}$ for two classes of
$2$-isometries $T$ (see Lemma~ \ref{convcd}). We next
show that the Cauchy dual operator $T^\prime$ of a
$2$-isometry $T$ is of class $C_{\cdot 0}$ if and only
if $T$ is completely non-unitary. Under the additional
assumption that $T$ satisfies the kernel condition,
the Cauchy dual operator $T^\prime$ is of class
$C_{0\cdot}$ if and only if $G(\{1\})=0,$ or
equivalently if and only if $E(\{1\})=0,$ where $G$
and $E$ are the spectral measures of $T^*T$ and the
zeroth weight $W_0$ of the model operator $W$ for $T,$
respectively (see Theorem~ \ref{coo}). Note that
non-isometric quasi-Brownian isometries do not satisfy
the kernel condition (see \cite[Example~ 4.4 and
Corollary~ 4.6]{A-C-J-S}) and their Cauchy dual
operators are never of class $C_{0\cdot}$ (see
Proposition~ \ref{coo-qB}(i)).

Now we fix notation and terminology. Let $\C$ stand
for the set of complex numbers. Denote by $\nbb$,
$\zbb_+$ and $\rbb_+$ the sets of positive integers,
nonnegative integers and nonnegative real numbers,
respectively. Given a set $X$, we write $\card{X}$ for
the cardinality of $X$ and denote by
$\chi_{\varDelta}$ the characteristic function of a
subset $\varDelta$ of $X$. The $\sigma$-algebra of all
Borel subsets of a topological space $X$ is denoted by
$\borel{X}$. In this paper, Hilbert spaces are assumed
to be complex and operators are assumed to be linear.
Let $\hh$ be a Hilbert space. As usual, we denote by
$\dim \hh$ the orthogonal dimension of $\hh$. If $f
\in \hh$, then $\langle f \rangle$ stands for the
linear span of the singleton of $f$. Given another
Hilbert space $\kk$, we denote by $\B(\hh,\kk)$ the
Banach space of all bounded operators from $\hh$ to
$\kk$. The kernel, the range and the modulus of an
operator $T \in \B(\hh,\kk)$ are denoted by $\ker T,$
$\ran T$ and $|T|,$ respectively. We abbreviate
$\B(\hh,\hh)$ to $\B(\hh)$ and regard $\B(\hh)$ as a
$C^*$-algebra. Its unit, which is the identity
operator on $\hh$, is denoted here by $I_\hh,$ or
simply by $I$ if no ambiguity arises. We write
$\sigma(T)$ for the spectrum of $T\in \B(\hh).$ Given
$T \in \B(\hh)$ and a cardinal number $\mathfrak n$,
we set $\hh^{\oplus{\mathfrak n}} = \bigoplus_{j\in J}
\hh_j$ and $T^{\oplus{\mathfrak n}}= \bigoplus_{j\in
J} T_j$ with $\hh_j=\hh$ and $T_j=T$ for all $j\in J$,
where $J$ is an index set of cardinality $\mathfrak
n$. We call $\hh^{\oplus{\mathfrak n}}$ and
$T^{\oplus{\mathfrak n}}$ the {\em $\mathfrak n$-fold
inflation} of $\hh$ and $T$, respectively. We adhere
to the convention that $\hh^{\oplus{0}}=\{0\}$ and
$T^{\oplus{0}}=0$. If $S$ and $T$ are Hilbert space
operators which are unitarily equivalent, then we
write $S \cong T$.

We say that an operator $T\in \B(\hh)$ is {\em
completely non-unitary} (resp., {\em pure}) if there
is no nonzero reducing closed vector subspace
$\mathcal L$ of $\hh$ such that the restriction
$T|_{\mathcal L}$ of $T$ to $\mathcal L$ is a unitary
(resp., a normal\/) operator. Following \cite{R-1}, we
call $T$ {\em analytic} if $\bigcap_{n=1}^{\infty}
T^n(\hh)=\{0\}$. Note that any analytic operator is
completely non-unitary. It is well known that any
operator $T\in \B(\hh)$ has a unique orthogonal
decomposition $T=N\oplus P$ such that $N$ is a normal
operator and $P$ is a pure operator (see
\cite[Corollary~ 1.3]{Mo}). We shall refer to $N$ and
$P$ as the {\em normal} and {\em pure} parts of $T$,
respectively. The following fact can be deduced from
\cite[Corollary~ 1.3]{Mo}.
   \begin{lemma}\label{unrown}
Operators $T_1\in \B(\hh_1)$ and $T_2\in \B(\hh_2)$
are unitarily equivalent if and only if their
corresponding normal and pure parts are unitarily
equivalent.
   \end{lemma}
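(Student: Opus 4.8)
The plan is to reduce everything to the uniqueness of the normal/pure decomposition guaranteed by \cite[Corollary~1.3]{Mo}, together with the observation that both normality and purity are invariant under unitary equivalence. Write $T_i = N_i \oplus P_i$ for the decomposition associated with $\hh_i = \mathcal{M}_i \oplus \mathcal{M}_i^\perp$, where $\mathcal{M}_i$ is a reducing closed subspace of $T_i$, $N_i = T_i|_{\mathcal{M}_i}$ is normal, and $P_i = T_i|_{\mathcal{M}_i^\perp}$ is pure ($i=1,2$).

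The ``if'' part is immediate: given unitaries $U \in \B(\mathcal{M}_1,\mathcal{M}_2)$ and $V \in \B(\mathcal{M}_1^\perp,\mathcal{M}_2^\perp)$ with $U N_1 = N_2 U$ and $V P_1 = P_2 V$, the unitary $U \oplus V \in \B(\hh_1,\hh_2)$ intertwines $T_1$ and $T_2$, whence $T_1 \cong T_2$.

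For the ``only if'' part, I would start from a unitary $W \in \B(\hh_1,\hh_2)$ with $W T_1 W^* = T_2$ and first record the elementary fact that a unitary carries reducing subspaces to reducing subspaces and preserves restrictions up to unitary equivalence; in particular, an operator unitarily equivalent to a normal (resp.\ pure) operator is again normal (resp.\ pure). Here purity is preserved because any nonzero reducing subspace of $W P_1 W^*$ on which that operator acts normally is the $W$-image of a reducing subspace of $P_1$ with the same property, which is excluded by the purity of $P_1$. Applying the transform $W(\cdot)W^*$ to $T_1 = N_1 \oplus P_1$ then produces, relative to $\hh_2 = W\mathcal{M}_1 \oplus (W\mathcal{M}_1)^\perp$, a decomposition $T_2 = (W N_1 W^*) \oplus (W P_1 W^*)$ in which the first summand is normal and the second is pure.

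The crux is then to invoke the uniqueness assertion in \cite[Corollary~1.3]{Mo}: since $T_2$ now admits two decompositions into a normal and a pure part, the reducing subspaces and the corresponding summands must coincide, so that $W\mathcal{M}_1 = \mathcal{M}_2$, $W N_1 W^* = N_2$, and $W P_1 W^* = P_2$. Restricting $W$ to $\mathcal{M}_1$ and to $\mathcal{M}_1^\perp$ exhibits $N_1 \cong N_2$ and $P_1 \cong P_2$, completing the argument. The only genuinely delicate point is verifying that purity is a unitary invariant, so that the second summand really is the pure part of $T_2$ and the uniqueness from \cite{Mo} can be applied; once this is in place, the remainder of the proof is purely formal.
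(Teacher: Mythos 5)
Your proof is correct and follows essentially the intended route: the paper itself gives no detailed argument, stating only that the lemma ``can be deduced from \cite[Corollary~1.3]{Mo},'' and your argument is precisely that deduction --- transport the decomposition $T_1=N_1\oplus P_1$ by the unitary $W$, check that normality and purity are unitary invariants, and invoke the uniqueness in Morrel's result to identify the transported summands with $N_2$ and $P_2$. Your careful verification that purity is preserved under unitary equivalence is exactly the point the paper leaves implicit, so nothing is missing.
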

   \section{\label{Sec4}Unitary equivalence of operator
valued unilateral weighted shifts} In this section,
the question of unitary equivalence of operator valued
unilateral weighted shifts is revisited. First, we
give a necessary and sufficient condition for two such
operators whose weights have dense range to be
unitarily equivalent (see Theorem~ \ref{przepluni}).
This result generalizes in particular \cite[Corollary~
3.3]{lam-1} in which weights are assumed to be
invertible. If weights are more regular, where the
regularity does not refer to invertibility, then the
characterization of unitary equivalence takes on a
much simpler form (see Theorem~ \ref{Fran4} and
Corollary~ \ref{dom2}). As an application, we answer
the question of when two orthogonal sums of uniformly
bounded families of injective unilateral weighted
shifts are unitarily equivalent (see Theorem~
\ref{desz1}).

We begin by proving a criterion for the modulus of a
finite product of bounded operators to be equal to the
product of their moduli.
   \begin{lemma} \label{kommod}
Let $n$ be an integer greater than or equal to $2$.
Suppose $A_1, \ldots, A_n \in \B(\hh)$ are such that
$|A_i|$ commutes with $A_j$ whenever $i < j$. Then
   \begin{enumerate}
   \item[(i)] the operators $|A_1|, \ldots, |A_n|$
mutually commute,
   \item[(ii)] $|A_1 \,\cdots\, A_n|^2 = |A_1|^2 \,\cdots\,
|A_n|^2$,
   \item[(iii)] $|A_1 \,\cdots\, A_n| = |A_1| \,\cdots\,
|A_n|$.
   \end{enumerate}
   \end{lemma}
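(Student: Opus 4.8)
The plan is to establish the three parts in the stated order, deriving each from its predecessor together with the standard fact from the continuous functional calculus that an operator commuting with a positive operator $B$ also commutes with every continuous function of $B$, in particular with $B^{1/2}$ (since $B^{1/2}$ is a norm limit of polynomials in $B$ on the spectrum of $B$).

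For part (i), I would fix indices $i < j$. The hypothesis reads $|A_i| A_j = A_j |A_i|$. Taking adjoints and using that $|A_i|$ is self-adjoint yields $A_j^* |A_i| = |A_i| A_j^*$, so $|A_i|$ commutes with $A_j^* A_j = |A_j|^2$. Since $|A_j|$ is the positive square root of $|A_j|^2$, the functional-calculus remark shows that $|A_i|$ commutes with $|A_j|$. As commutativity of self-adjoint operators is symmetric, this holds for every pair of indices, giving mutual commutativity.

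For part (ii), I would argue by induction on $n$. Writing $B = A_1 \cdots A_{n-1}$, one has $|A_1 \cdots A_n|^2 = A_n^* B^* B A_n = A_n^* |B|^2 A_n$. The induction hypothesis gives $|B|^2 = |A_1|^2 \cdots |A_{n-1}|^2$; each factor $|A_i|^2$ with $i < n$ commutes with $A_n$ (square the hypothesis relation), hence so does the product $|B|^2$. Pulling $|B|^2$ through $A_n$ converts $A_n^* |B|^2 A_n$ into $|A_n|^2 |B|^2$, and reordering by the mutual commutativity from part (i) yields $|A_1|^2 \cdots |A_n|^2$. The base case $n=2$ is the same computation with $B = A_1$. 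For part (iii), I would set $P = |A_1| \cdots |A_n|$: by part (i) the factors are mutually commuting positive operators, so their product is positive (for commuting positive $S,T$ one has $ST = S^{1/2} T S^{1/2} \Ge 0$, and one iterates), and commutativity also gives $P^2 = |A_1|^2 \cdots |A_n|^2 = |A_1 \cdots A_n|^2$ by part (ii). Thus $P$ is a positive square root of $|A_1 \cdots A_n|^2$, and uniqueness of the positive square root forces $P = |A_1 \cdots A_n|$.

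The computations are routine; the only point needing care is the repeated appeal to the fact that commuting with a positive operator entails commuting with its square root, which underlies both the transfer from $|A_j|^2$ to $|A_j|$ in part (i) and the positivity of the product $P$ in part (iii). One must also track the asymmetry of the hypothesis (it is imposed only for $i<j$) so that every reordering in part (ii) invokes exclusively the symmetric commutativity of the moduli established in part (i), rather than any unjustified commutation of the $A_j$ themselves.
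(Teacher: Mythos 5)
Your proof is correct and follows essentially the same route as the paper: part (i) via the adjoint relation $|A_i|A_j^*=A_j^*|A_i|$ and the square root theorem, part (ii) by pulling the squared moduli out of $A_n^*\cdots A_1^*A_1\cdots A_n$ (the paper iterates this telescopically where you phrase it as induction on $n$, a cosmetic difference), and part (iii) exactly as in the paper, via positivity of products of commuting positive operators and uniqueness of the positive square root.
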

   \begin{proof}
(i) Fix integers $i,j \in \{1, \ldots, n\}$ such that
$i<j$. Since $|A_i| A_j = A_j |A_i|$, and thus $|A_i|
A_j^* = A_j^* |A_i|$, we see that
$|A_i||A_j|^2=|A_j|^2|A_i|$. Hence
$|A_i||A_j|=|A_j||A_i|$, which proves (i).

(ii) By our assumption and (i), we have
\allowdisplaybreaks
   \begin{align}   \notag
|A_1\,\cdots\, A_n|^2 & = A_n^* \,\cdots\, A_2^*
|A_1|^2 A_2 \,\cdots\, A_n
   \\   \notag
& = |A_1|^2 A_n^* \,\cdots\, A_3^* |A_2|^2 A_3
\,\cdots\, A_n
   \\   \notag
& \hspace{4.5ex} \vdots
   \\  \label{Fran3}
& = |A_1|^2 \,\cdots\, |A_n|^2.
   \end{align}

(iii) It follows from \eqref{Fran3} and (i) that
   \begin{align*}
|A_1 \,\cdots\, A_n|^2 = (|A_1| \,\cdots\, |A_n|)^2.
   \end{align*}
Applying the square root theorem and the fact that the
product of commuting positive bounded operators is
positive, we conclude that (iii) holds.
   \end{proof}
Let us recall the definition of an operator valued
unilateral weighted shift. Suppose $\mathcal M$ is a
\underline{nonzero} Hilbert space. Denote by
$\ell^2_{\mathcal M}$ the Hilbert space of all vector
sequences $\{h_n\}_{n=0}^{\infty} \subseteq \mathcal
M$ such that $\sum_{n=0}^{\infty} \|h_n\|^2 < \infty$
equipped with the standard inner product
   \begin{align*}
\big\langle \{g_n\}_{n=0}^{\infty},
\{h_n\}_{n=0}^{\infty}\big\rangle =
\sum_{n=0}^{\infty} \langle g_n, h_n \rangle, \quad
\{g_n\}_{n=0}^{\infty}, \{h_n\}_{n=0}^{\infty} \in
\ell^2_{\mathcal M}.
   \end{align*}
Let $\{W_n\}_{n=0}^{\infty} \subseteq \B(\mathcal M)$
be a uniformly bounded sequence of operators. Then the
operator $W \in \B(\ell^2_{\mathcal M})$ defined by
   \begin{align*}
W(h_0, h_1, \ldots) = (0, W_0 h_0, W_1 h_1, \ldots),
\quad (h_0, h_1, \ldots) \in \ell^2_{\mathcal M},
   \end{align*}
is called an {\em operator valued unilateral weighted
shift} with weights $\{W_n\}_{n=0}^{\infty}$. It is
easy to verify that \allowdisplaybreaks
   \begin{align} \label{aopws}
W^*(h_0, h_1, \ldots) &= (W_0^*h_1, W_1^*h_2, \ldots),
\quad (h_0, h_1, \ldots) \in \ell^2_{\mathcal M},
   \\   \label{aopws2}
W^*W(h_0, h_1, \ldots) &= (W_0^*W_0h_0, W_1^*W_1h_1,
\ldots), \quad (h_0, h_1, \ldots) \in \ell^2_{\mathcal
M}.
   \end{align}
If each weight $W_n$ of $W$ is an invertible (resp., a
positive) element of the $C^*$-algebra $\B(\mathcal
M)$, then we say that $W$ is an operator valued
unilateral weighted shift with {\em invertible}
(resp., {\em positive}) weights. Putting $\mathcal
M=\C$, we arrive at the well-known notion of a
unilateral weighted shift in $\ell^2_{\C}=\ell^2$.

From now on, we assume that $\mathcal M^{(1)}$ and
$\mathcal M^{(2)}$ are nonzero Hilbert spaces and
$W^{(1)} \in \B(\ell^2_{\mathcal M^{(1)}})$ and
$W^{(2)} \in \B(\ell^2_{\mathcal M^{(2)}})$ are
operator valued unilateral weighted shifts with
weights $\{W_n^{(1)}\}_{n=0}^{\infty} \subseteq
\B(\mathcal M^{(1)})$ and
$\{W_n^{(2)}\}_{n=0}^{\infty} \subseteq \B(\mathcal
M^{(2)})$, respectively. Below, under the assumption
that the weights of $W^{(1)}$ have dense range, we
characterize bounded operators which intertwine
$W^{(1)}$ and $W^{(2)}$ (see \cite[Lemma~ 2.1]{lam-1}
for the case of invertible weights).
   \begin{lemma} \label{przepl}
Suppose that each operator $W_n^{(1)}$, $n \in
\zbb_+$, has dense range. Let $A\in
\B(\ell^2_{\mathcal M^{(1)}}, \ell^2_{\mathcal
M^{(2)}})$ be an operator with the matrix
representation $[A_{i,j}]_{i,j=0}^{\infty}$, where
$A_{i,j}\in\B(\mathcal M^{(1)}, \mathcal M^{(2)})$ for
all $i,j\in\zbb_+$. Then the following two conditions
are equivalent{\em :}
   \begin{enumerate}
   \item[(i)] $AW^{(1)} = W^{(2)}A$,
   \item[(ii)] $A$ is lower triangular, that is, $A_{i,j}=0$
whenever $i<j$, and
   \begin{align} \label{ABBA0}
A_{i,j} W_{j-1}^{(1)} \,\cdots\, W_{0}^{(1)} =
W_{i-1}^{(2)} \,\cdots\, W_{i-j}^{(2)} A_{i-j,0},
\quad i \Ge j \Ge 1.
   \end{align}
   \end{enumerate}
   \end{lemma}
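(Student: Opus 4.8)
The plan is to verify the equivalence by computing both sides of the intertwining relation $AW^{(1)} = W^{(2)}A$ entrywise in the block-matrix representation and comparing the resulting block equations. First I would set up the matrix calculus: writing $A = [A_{i,j}]$, I would use the formulas \eqref{aopws} (describing how $W^{(2)}$ acts on the left) together with the defining action of $W^{(1)}$ to express the $(i,j)$-block of each of the two products $AW^{(1)}$ and $W^{(2)}A$. Since $W^{(1)}$ shifts columns (multiplying the $j$th incoming coordinate by $W^{(1)}_j$ and moving it to position $j+1$) and $W^{(2)}$ shifts rows, a direct block computation should give
\begin{align*}
(AW^{(1)})_{i,j} = A_{i,j+1} W^{(1)}_{j}, \qquad (W^{(2)}A)_{i,j} = W^{(2)}_{i-1} A_{i-1,j},
\end{align*}
with the convention that blocks with a negative index vanish. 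Equating these for all $i,j \in \zbb_+$ yields the family of \emph{one-step recurrences}
\begin{align} \label{onestep}
A_{i,j+1} W^{(1)}_{j} = W^{(2)}_{i-1} A_{i-1,j}, \qquad i \Ge 1,\; j \Ge 0,
\end{align}
and for $i = 0$ the equation $A_{0,j+1} W^{(1)}_{j} = 0$ for all $j \Ge 0$.

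The next step is to extract the lower-triangularity from these recurrences, and this is where the dense-range hypothesis on the $W^{(1)}_n$ enters crucially. From the $i=0$ relations, $A_{0,j+1} W^{(1)}_{j} = 0$; since $W^{(1)}_{j}$ has dense range, its range is dense in $\mathcal M^{(1)}$, forcing $A_{0,j+1} = 0$ for every $j \Ge 0$, i.e.\ the entire zeroth row vanishes except possibly $A_{0,0}$. I would then propagate this up the diagonals: fixing the difference $i - j$ and inducting on $i$, whenever $i < j$ the recurrence \eqref{onestep} expresses $A_{i,j}$ (with the right index shift) in terms of $A_{i-1,j-1}$, which by the inductive hypothesis already lies above the diagonal and vanishes, and the dense-range cancellation of $W^{(1)}$ again kills the new block. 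Carrying this through shows $A_{i,j} = 0$ whenever $i < j$, which is the triangularity assertion.

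Finally, for the surviving lower-triangular part I would iterate the recurrence \eqref{onestep} downward along the diagonals to telescope it into the closed form \eqref{ABBA0}. Starting from a block $A_{i-j,0}$ on the zeroth column and applying \eqref{onestep} repeatedly $j$ times, each application peels off one factor $W^{(2)}_{\bullet}$ on the left and attaches one factor $W^{(1)}_{\bullet}$ on the right, producing exactly the product $W^{(2)}_{i-1} \cdots W^{(2)}_{i-j} A_{i-j,0}$ on one side and $A_{i,j} W^{(1)}_{j-1} \cdots W^{(1)}_{0}$ on the other. The converse direction (ii)$\Rightarrow$(i) is then a matter of reversing this bookkeeping: assuming $A$ is lower triangular and satisfies \eqref{ABBA0}, one checks that the block equations \eqref{onestep} hold and hence reassembles $AW^{(1)} = W^{(2)}A$. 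I expect the main obstacle to be purely organizational rather than conceptual—keeping the index shifts in \eqref{onestep} consistent through the diagonal induction and the telescoping, and handling the boundary cases $i=0$ and $j=0$ correctly—since the dense-range cancellation itself is immediate once the equations are in hand.
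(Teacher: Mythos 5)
Your proposal is correct and takes essentially the same approach as the paper's proof: the same block-matrix identification of the intertwining relation with the one-step recurrences, the same dense-range cancellations to kill the zeroth row and the blocks above the diagonal, the same telescoping to reach \eqref{ABBA0}, and the same reverse cancellation against the cumulative product $W^{(1)}_{j-1}\cdots W^{(1)}_{0}$ for (ii)$\Rightarrow$(i). The only difference is bookkeeping: the paper first iterates the recurrence into a general identity and then specializes it to get both triangularity and \eqref{ABBA0}, whereas you run the induction along the diagonals one step at a time.
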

   \begin{proof}
Denote by $\delta_{i,j}$ the Kronecker delta function.
Since $W^{(k)}$ has the matrix representation
$[\delta_{i,j+1} W_j^{(k)}]_{i,j=0}^{\infty}$ for
$k=1,2$, we see that (i) holds if and only if
$A_{i,j+1} W_j^{(1)} = W_{i-1}^{(2)} A_{i-1,j}$ for
all $i,j \in \zbb_+$ (with the convention that
$W_{-1}^{(2)}=0$ and $A_{-1,j}=0$ for $j\in \zbb_+$).
Hence, (i) holds if and only if the following
equations hold \allowdisplaybreaks
   \begin{align}  \label{ABBA1}
A_{0,j} & = 0, \quad j \in \nbb,
   \\  \label{ABBA3}
A_{i+1,j+1} W_j^{(1)} & = W_i^{(2)} A_{i,j}, \quad i,j
\in \zbb_+.
   \end{align}

(i)$\Rightarrow$(ii) By induction, we infer from
\eqref{ABBA3} that
   \begin{align} \label{ABBA2}
A_{i+k,j+k} W_{j+k-1}^{(1)} \,\cdots\, W_{j}^{(1)} =
W_{i+k-1}^{(2)} \,\cdots\, W_{i}^{(2)} A_{i,j}, \quad
i,j \in \zbb_+, \, k \in \nbb.
   \end{align}
This and \eqref{ABBA1} combined with the assumption
that each $W_n^{(1)}$ has dense range, imply that $A$
is lower triangular. It is a matter of routine to show
that \eqref{ABBA2} implies \eqref{ABBA0}.

(ii)$\Rightarrow$(i) Since $A$ is lower triangular and
\eqref{ABBA0} holds, it remains to show that
\eqref{ABBA3} is valid whenever $i\Ge j\Ge 1$.
Applying \eqref{ABBA0} again, we get
\allowdisplaybreaks
   \begin{align*}
A_{i+1,j+1} W_j^{(1)} \Big(W_{j-1}^{(1)} \,\cdots\,
W_0^{(1)}\Big) & = W_i^{(2)} \Big(W_{i-1}^{(2)}
\,\cdots\, W_{i-j}^{(2)} A_{i-j,0}\Big)
   \\
& = W_i^{(2)} A_{i,j} \Big(W_{j-1}^{(1)} \,\cdots\,
W_0^{(1)}\Big).
   \end{align*}
Since each operator $W_n^{(1)}$ has dense range, we
conclude that $A_{i+1,j+1} W_j^{(1)}=W_i^{(2)}
A_{i,j}$. This completes the proof.
   \end{proof}
The question of when the operators $W^{(1)}$ and
$W^{(2)}$ whose weights have dense range are unitarily
equivalent is answered by the following theorem (see
\cite[Corollary~ 3.3]{lam-1} for the case of
invertible weights).
   \begin{theorem} \label{przepluni}
Suppose that for any $k=1,2$ and every $n \in \zbb_+$,
the operator $W_n^{(k)}$ has dense range. Then the
following two conditions are equivalent{\em :}
   \begin{enumerate}
   \item[(i)] $W^{(1)} \cong  W^{(2)}$,
   \item[(ii)] there exists a unitary isomorphism $U_0\in
\B(\mathcal M^{(1)}, \mathcal M^{(2)})$ such that
   \begin{align} \label{Fra1}
|W_{[i]}^{(1)}| = U_0^*|W_{[i]}^{(2)}| U_0, \quad i
\in \nbb,
   \end{align}
where $W_{[i]}^{(k)} = W_{i-1}^{(k)} \,\cdots\,
W_{0}^{(k)}$ for $i\in \nbb$ and $k=1,2$.
   \end{enumerate}
   \end{theorem}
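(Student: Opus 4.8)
The plan is to derive both implications from the intertwining description in Lemma~\ref{przepl}, exploiting the fact that the dense range hypothesis is imposed symmetrically on both families of weights.

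For (i)$\Rightarrow$(ii), suppose $U \in \B(\ell^2_{\mathcal M^{(1)}}, \ell^2_{\mathcal M^{(2)}})$ is unitary with $UW^{(1)} = W^{(2)}U$. Applying Lemma~\ref{przepl} to $A = U$ shows that $U$ is lower triangular. The key move is that, since $U$ is unitary, $U^*$ intertwines in the reverse direction, $U^* W^{(2)} = W^{(1)} U^*$; because the weights of $W^{(2)}$ also have dense range, Lemma~\ref{przepl} applies to $A = U^*$ and shows that $U^*$ is lower triangular as well. An operator whose matrix and whose adjoint's matrix are both lower triangular must be block diagonal, so $U = \bigoplus_{i} U_{i,i}$ with each $U_{i,i} \in \B(\mathcal M^{(1)}, \mathcal M^{(2)})$ unitary. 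Setting $U_0 := U_{0,0}$ and specializing \eqref{ABBA0} to $j = i$ gives $U_{i,i} W_{[i]}^{(1)} = W_{[i]}^{(2)} U_0$. Since $U_{i,i}$ is unitary, computing $|W_{[i]}^{(1)}|^2 = W_{[i]}^{(1)*}W_{[i]}^{(1)}$ from this relation yields $|W_{[i]}^{(1)}|^2 = U_0^* |W_{[i]}^{(2)}|^2 U_0$, and uniqueness of positive square roots, together with unitarity of $U_0$, gives \eqref{Fra1}.

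For (ii)$\Rightarrow$(i), given $U_0$ I would build the diagonal blocks by hand. First note that each $W_{[i]}^{(k)}$ has dense range, being a finite product of dense-range operators. For $i \geq 1$, define $U_i$ on $\ran W_{[i]}^{(1)}$ by $U_i (W_{[i]}^{(1)} x) = W_{[i]}^{(2)} U_0 x$. The modulus relation \eqref{Fra1}, rewritten as $|W_{[i]}^{(2)}|^2 U_0 = U_0 |W_{[i]}^{(1)}|^2$, shows this map is well defined (the kernels of $W_{[i]}^{(1)}$ and of $W_{[i]}^{(2)}U_0$ coincide) and isometric on $\ran W_{[i]}^{(1)}$; the dense range of $W_{[i]}^{(1)}$ then lets it extend to an isometry on $\mathcal M^{(1)}$, while the dense range of $W_{[i]}^{(2)}$ forces it to be onto, hence unitary, with $U_i W_{[i]}^{(1)} = W_{[i]}^{(2)} U_0$ by construction. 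Assembling $U := \bigoplus_{i \geq 0} U_i$ (with the given $U_0$ as the zeroth block) produces a block-diagonal unitary; checking \eqref{ABBA0} reduces, because the off-diagonal blocks and the factors $U_{i-j,0}$ with $i>j$ all vanish, exactly to the relations $U_i W_{[i]}^{(1)} = W_{[i]}^{(2)} U_0$, so Lemma~\ref{przepl} yields $UW^{(1)} = W^{(2)}U$ and therefore $W^{(1)} \cong W^{(2)}$.

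The main obstacle is the block-diagonalization step in (i)$\Rightarrow$(ii): a priori a unitary intertwiner is only lower triangular, and it is precisely the two-sided dense range hypothesis, allowing Lemma~\ref{przepl} to be applied to $U^*$ as well, that collapses it to the diagonal and extracts the single unitary $U_0$ governing all the moduli. The remaining verifications (well-definedness, the isometry estimate, and surjectivity of the blocks $U_i$, as well as the matrix computation \eqref{ABBA0}) are routine once the modulus identity and the dense range of the partial products $W_{[i]}^{(k)}$ are in hand.
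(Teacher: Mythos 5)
Your proposal is correct and follows essentially the same route as the paper's proof: Lemma~\ref{przepl} applied to both $U$ and $U^*$ (using the dense range of both weight families) to force block-diagonality, then \eqref{ABBA0} with $i=j$ plus the square root theorem for (i)$\Rightarrow$(ii); and for (ii)$\Rightarrow$(i) the same construction of unitaries $U_i$ with $U_i W_{[i]}^{(1)} = W_{[i]}^{(2)} U_0$ from the norm identity $\|W_{[i]}^{(1)}f\| = \|W_{[i]}^{(2)}U_0 f\|$ and the dense range of the partial products, followed by Lemma~\ref{przepl} applied to $\bigoplus_i U_i$. Your write-up merely spells out in more detail the well-definedness, isometry, and surjectivity of the $U_i$, which the paper compresses into the existence of a unique unitary intertwiner.
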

   \begin{proof}
(i)$\Rightarrow$(ii) Suppose that $U\in
\B(\ell^2_{\mathcal M^{(1)}}, \ell^2_{\mathcal
M^{(2)}})$ is a unitary isomorphism such that $U
W^{(1)} = W^{(2)}U$ and $[U_{i,j}]_{i,j=0}^{\infty}$
is the matrix representation of $U$, where
$\{U_{i,j}\}_{i,j=0}^{\infty} \subseteq \B(\mathcal
M^{(1)}, \mathcal M^{(2)})$. It follows from Lemma~
\ref{przepl} that the operator $U$ is lower
triangular. Since $U^*=U^{-1}$ is a unitary
isomorphism with the corresponding matrix
representation $[(U_{j,i})^*]_{i,j=0}^{\infty}$ and
$U^*W^{(2)} = W^{(1)}U^*$, we infer from Lemma~
\ref{przepl} that $U^*$ is lower triangular. In other
words, $U_{i,j}=0$ whenever $i\neq j$. Since $U$ is a
unitary isomorphism, we deduce that for any $i \in
\zbb_+$, $U_i:=U_{i,i}$ is a unitary isomorphism. It
follows from \eqref{ABBA0} that
   \begin{align*}
U_i W_{[i]}^{(1)} = W_{[i]}^{(2)} U_0, \quad i \in
\nbb.
   \end{align*}
This yields
   \begin{align*}
|W_{[i]}^{(1)}|^2 = (W_{[i]}^{(1)})^* U_i^* U_i
W_{[i]}^{(1)} = U_0^* |W_{[i]}^{(2)}|^2 U_0, \quad
i\in \nbb.
   \end{align*}
Applying the square root theorem implies \eqref{Fra1}.

(ii)$\Rightarrow$(i) In view of \eqref{Fra1}, we have
   \begin{align}  \label{fran2}
\|W_{[i]}^{(1)} f\| = \||W_{[i]}^{(1)}| f\| =
\||W_{[i]}^{(2)}| U_0 f\| = \|W_{[i]}^{(2)} U_0 f\|,
\quad f \in \mathcal M^{(1)}, \, i \in \nbb.
   \end{align}
By our assumption, for any $k=1,2$ and every $i\in
\nbb$, the operator $W_{[i]}^{(k)}$ has dense range.
Hence, by \eqref{fran2}, for every $i\in \nbb$, there
exists a unique unitary isomorphism $U_i\in
\B(\mathcal M^{(1)}, \mathcal M^{(2)})$ such that
   \begin{align*}
U_i W_{[i]}^{(1)} = W_{[i]}^{(2)} U_0, \quad i \in
\nbb.
   \end{align*}
Set $U=\bigoplus_{i=0}^{\infty} U_i$. Applying Lemma~
\ref{przepl} to $A=U$, we get $UW^{(1)} = W^{(2)}U$
which completes the proof.
   \end{proof}
Under additional assumptions on weights, the above
characterization of unitary equivalence of $W^{(1)}$
and $W^{(2)}$ can be substantially simplified.
   \begin{theorem} \label{Fran4}
Suppose that for any $k=1,2$ and every $n \in \zbb_+$,
$\ker W_n^{(1)} = \{0\}$, the operator $W_n^{(k)}$ has
dense range and $|W_n^{(k)}|$ commutes with
$W_m^{(k)}$ whenever $m < n$. Then the following two
conditions are equivalent{\em :}
   \begin{enumerate}
   \item[(i)] $W^{(1)} \cong  W^{(2)}$,
   \item[(ii)] there exists a unitary isomorphism $U_0\in
\B(\mathcal M^{(1)}, \mathcal M^{(2)})$ such that
   \begin{align} \label{Fran1}
|W_{n}^{(1)}| = U_0^*|W_{n}^{(2)}| U_0, \quad n\in
\zbb_+.
   \end{align}
   \end{enumerate}
   \end{theorem}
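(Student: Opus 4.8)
The plan is to deduce this refinement from Theorem~\ref{przepluni} by showing that, under the present hypotheses, condition~(ii) above (phrased in terms of the individual moduli $|W_n^{(k)}|$) is equivalent to condition~(ii) of Theorem~\ref{przepluni} (phrased in terms of the products $W_{[i]}^{(k)}=W_{i-1}^{(k)}\cdots W_0^{(k)}$). The bridge is Lemma~\ref{kommod}. Applying it with $A_\ell=W_{i-\ell}^{(k)}$ for $\ell=1,\ldots,i$---so that the requirement ``$|A_p|$ commutes with $A_q$ for $p<q$'' is exactly the commutation hypothesis $m<n\Rightarrow|W_n^{(k)}|W_m^{(k)}=W_m^{(k)}|W_n^{(k)}|$---part~(iii) of that lemma gives the factorization
\begin{align*}
|W_{[i]}^{(k)}| = |W_{i-1}^{(k)}|\,\cdots\,|W_0^{(k)}|, \quad i\in\nbb,\ k=1,2,
\end{align*}
while part~(i) guarantees that, for each fixed $k$, the operators $\{|W_n^{(k)}|\}_{n\in\zbb_+}$ mutually commute; in particular every partial product among them is positive.

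For the implication (ii)$\Rightarrow$(i) I would substitute \eqref{Fran1} into this factorization and telescope, using $U_0U_0^*=I_{\mathcal M^{(2)}}$:
\begin{align*}
|W_{[i]}^{(1)}| = \big(U_0^*|W_{i-1}^{(2)}|U_0\big)\cdots\big(U_0^*|W_0^{(2)}|U_0\big) = U_0^*\Big(|W_{i-1}^{(2)}|\,\cdots\,|W_0^{(2)}|\Big)U_0 = U_0^*|W_{[i]}^{(2)}|U_0,\quad i\in\nbb.
\end{align*}
This is precisely condition~(ii) of Theorem~\ref{przepluni}, which then yields $W^{(1)}\cong W^{(2)}$.

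For the converse (i)$\Rightarrow$(ii), Theorem~\ref{przepluni} supplies a unitary isomorphism $U_0$ with $|W_{[i]}^{(1)}|=U_0^*|W_{[i]}^{(2)}|U_0$ for all $i\in\nbb$, which via the factorization becomes
\begin{align*}
|W_{i-1}^{(1)}|\,\cdots\,|W_0^{(1)}| = R_{i-1}\,\cdots\,R_0, \quad i\in\nbb,
\end{align*}
where $R_n:=U_0^*|W_n^{(2)}|U_0$ form another family of mutually commuting positive operators on $\mathcal M^{(1)}$. I would then prove $|W_n^{(1)}|=R_n$ for every $n\in\zbb_+$ by induction on $n$, which is exactly \eqref{Fran1}. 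Taking $i=1$ gives the base case $|W_0^{(1)}|=R_0$. Assuming $|W_n^{(1)}|=R_n$ for $n<m$, the relation for $i=m+1$ combined with the induction hypothesis yields $|W_m^{(1)}|P=R_mP$, hence $(|W_m^{(1)}|-R_m)P=0$, where $P:=|W_{m-1}^{(1)}|\cdots|W_0^{(1)}|$.

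The crux---and the only place where the injectivity hypothesis $\ker W_n^{(1)}=\{0\}$ is used---is the passage from $(|W_m^{(1)}|-R_m)P=0$ to $|W_m^{(1)}|=R_m$, since one cannot literally divide by the operator $P$. Instead I would argue that $P$ has dense range. Each $|W_n^{(1)}|$ is a positive operator with $\ker|W_n^{(1)}|=\ker W_n^{(1)}=\{0\}$, hence is injective and, being self-adjoint, has dense range. A product of finitely many mutually commuting positive operators is again positive (as already invoked in the proof of Lemma~\ref{kommod}), and such a product of injective operators stays injective; thus $P$ is positive and injective, and therefore of dense range. Consequently $|W_m^{(1)}|-R_m$ vanishes on the dense subspace $\ran P$ and must be $0$, completing the induction. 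I expect this cancellation to be the only real obstacle; note that injectivity is needed solely for $k=1$, whereas dense range of the weights $W_n^{(2)}$ enters only through the appeal to Theorem~\ref{przepluni}.
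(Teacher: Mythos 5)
Your proposal is correct and takes essentially the same route as the paper: both deduce the theorem from Theorem~\ref{przepluni} via the factorization $|W_{[i]}^{(k)}|=|W_{i-1}^{(k)}|\cdots|W_0^{(k)}|$ supplied by Lemma~\ref{kommod}, both telescope with $U_0U_0^*=I$ for (ii)$\Rightarrow$(i), and both carry out the cancellation in (i)$\Rightarrow$(ii) by observing that $P=|W_{[m]}^{(1)}|$ is positive and injective (from $\ker W_n^{(1)}=\{0\}$), hence has dense range. The only cosmetic difference is that you organize that direction as an induction on $n$, whereas the paper cancels directly at each $n$ by comparing \eqref{Fra1} at the indices $n$ and $n+1$.
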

   \begin{proof}
(i)$\Rightarrow$(ii) It follows from Theorem~
\ref{przepluni} that there exists a unitary
isomorphism $U_0\in \B(\mathcal M^{(1)}, \mathcal
M^{(2)})$ such that \eqref{Fra1} holds. We will show
that \eqref{Fran1} is valid. The case of $n=0$ follows
directly from \eqref{Fra1} with $i=1$. Suppose now
that $n\in \nbb$. Then, by Lemma~ \ref{kommod} and
\eqref{Fra1}, we have \allowdisplaybreaks
   \begin{align}  \notag
|W_n^{(1)}| |W_{[n]}^{(1)}| = |W_{[n+1]}^{(1)}| & =
U_0^*|W_{[n+1]}^{(2)}| U_0
   \\ \notag
& = U_0^* |W_n^{(2)}| U_0 U_0^* |W_{[n]}^{(2)}| U_0
   \\ \label{dom1}
& = U_0^* |W_n^{(2)}| U_0 |W_{[n]}^{(1)}|.
   \end{align}
Since $W_{[n]}^{(1)}$ is injective, we deduce that the
operator $|W_{[n]}^{(1)}|$ has dense range. Hence, by
\eqref{dom1}, $|W_n^{(1)}| = U_0^* |W_n^{(2)}| U_0$.

(ii)$\Rightarrow$(i) It follows from Lemma~
\ref{kommod} that
   \begin{align*}
|W_{[i]}^{(k)}| = |W_{i-1}^{(k)}| \,\cdots\,
|W_{0}^{(k)}|, \quad i\in \nbb, \, k=1,2.
   \end{align*}
Hence, by \eqref{Fran1} and Lemma~ \ref{kommod}, we
have
   \begin{align*}
|W_{[i]}^{(1)}| = (U_0^*|W_{i-1}^{(2)}| U_0)
\,\cdots\, (U_0^* |W_{0}^{(2)}|U_0) = U_0^*
|W_{[i]}^{(2)}| U_0, \quad i\in \nbb.
   \end{align*}
In view of Theorem~ \ref{przepluni}, $W^{(1)} \cong
W^{(2)}$. This completes the proof.
   \end{proof}
   \begin{corollary} \label{dom2}
Suppose that for $k=1,2$,
$\{W_n^{(k)}\}_{n=0}^{\infty}$ are injective diagonal
operators with respect to the same orthonormal basis
of $\mathcal M^{(k)}$. Then $W^{(1)} \cong W^{(2)}$ if
and only if the condition {\em (ii)} of Theorem~ {\em
\ref{Fran4}} is satisfied.
   \end{corollary}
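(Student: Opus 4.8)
The plan is to derive Corollary~\ref{dom2} directly from Theorem~\ref{Fran4} by verifying that the diagonal hypothesis is a special case of the hypotheses of that theorem. Fix a common orthonormal basis $\{e_s\}_{s\in S}$ of $\mathcal M^{(k)}$ with respect to which each $W_n^{(k)}$ is diagonal, say $W_n^{(k)} e_s = w_{n,s}^{(k)} e_s$ with $w_{n,s}^{(k)}\in\C$. First I would observe that injectivity of a diagonal operator forces $w_{n,s}^{(k)}\neq 0$ for every $s$, so each $W_n^{(k)}$ has dense range (its range contains the dense span of $\{e_s\}$) and $\ker W_n^{(1)}=\{0\}$. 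Thus the dense-range and injectivity requirements of Theorem~\ref{Fran4} hold automatically.

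Next I would check the commutativity condition $|W_n^{(k)}|$ commutes with $W_m^{(k)}$ for $m<n$. This is where the shared-basis assumption does all the work: since $|W_n^{(k)}|$ is the diagonal operator with entries $|w_{n,s}^{(k)}|$ and $W_m^{(k)}$ is diagonal with entries $w_{m,s}^{(k)}$ in the \emph{same} basis, any two such diagonal operators commute (in fact $|W_n^{(k)}|$ and $W_m^{(k)}$ are simultaneously diagonalized, so their product in either order is the diagonal operator with entries $|w_{n,s}^{(k)}|\,w_{m,s}^{(k)}$). Hence all three structural hypotheses of Theorem~\ref{Fran4} are satisfied by the families $\{W_n^{(k)}\}_{n=0}^\infty$.

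With the hypotheses verified, the conclusion is immediate: Theorem~\ref{Fran4} asserts the equivalence of $W^{(1)}\cong W^{(2)}$ with condition~(ii) of that theorem, which is precisely the statement of the corollary. I do not expect any genuine obstacle here; the corollary is essentially an instantiation of the theorem, and the only point requiring a word of justification is that \emph{diagonal with respect to a common basis} yields the commutativity $|W_n^{(k)}| W_m^{(k)} = W_m^{(k)} |W_n^{(k)}|$. The subtlety worth flagging, if any, is that the theorem requires commutativity only for $m<n$, but the diagonal hypothesis delivers commutativity of $|W_n^{(k)}|$ with \emph{every} $W_m^{(k)}$, so the needed inequality-restricted version is trivially covered.
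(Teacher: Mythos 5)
Your proposal is correct and is exactly the argument the paper intends: Corollary~\ref{dom2} is stated without proof as an immediate instance of Theorem~\ref{Fran4}, and your verification (injectivity of a diagonal operator forces nonzero diagonal entries, hence dense range, and simultaneous diagonalization gives the commutativity of $|W_n^{(k)}|$ with $W_m^{(k)}$) is precisely the implicit justification. The only cosmetic point is that the basis should carry the index $k$ (one basis per space $\mathcal M^{(k)}$, not one basis shared across $k=1,2$), which is clearly what you meant.
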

   \begin{remark}
First, it is easily verifiable that Theorem~
\ref{Fran4} remains true if instead of assuming that
the operators $\{W_n^{(1)}\}_{n=0}^{\infty}$ are
injective, we assume that the operators
$\{W_n^{(2)}\}_{n=0}^{\infty}$ are injective. Second,
the assumption that the operators
$\{W_n^{(1)}\}_{n=0}^{\infty}$ are injective was used
only in the proof of the implication
(i)$\Rightarrow$(ii) of Theorem~ \ref{Fran4}. Third,
the assertion (ii) of Theorem~ \ref{Fran4} implies
that the operators $\{W_n^{(1)}\}_{n=0}^{\infty}$ are
injective if and only if the operators
$\{W_n^{(2)}\}_{n=0}^{\infty}$ are injective. \hfill
$\diamondsuit$
   \end{remark}
We are now in a position to characterize the unitary
equivalence of two orthogonal sums of uniformly
bounded families of injective unilateral weighted
shifts.
   \begin{theorem} \label{desz1}
Suppose for $k=1,2$, $\varOmega_k$ is a nonempty set
and $\{S_{\omega}^{(k)}\}_{\omega \in \varOmega_k}
\subseteq \B(\ell^2)$ is a uniformly bounded family of
injective unilateral weighted shifts. Then the
following two conditions are equivalent{\em :}
   \begin{enumerate}
   \item[(i)] $\bigoplus_{\omega\in \varOmega_1}
S_{\omega}^{(1)} \cong \bigoplus_{\omega\in
\varOmega_2} S_{\omega}^{(2)}$,
   \item[(ii)] there exists a bijection $\varPhi\colon
\varOmega_1 \to \varOmega_2$ such that
$S_{\varPhi(\omega)}^{(2)}=S_{\omega}^{(1)}$ for all
$\omega \in \varOmega_1$.
   \end{enumerate}
   \end{theorem}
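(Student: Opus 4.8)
The plan is to realize each orthogonal sum as a single operator valued unilateral weighted shift with diagonal weights, and then to invoke Corollary \ref{dom2}. For $k=1,2$, let $S_\omega^{(k)}$ have weight sequence $\{s_n^{(k,\omega)}\}_{n=0}^{\infty}$, where $s_n^{(k,\omega)}\neq 0$ by injectivity, and set $\mathcal M^{(k)}=\ell^2(\varOmega_k)$ with its standard orthonormal basis $\{e_\omega\}_{\omega\in\varOmega_k}$. Regrouping the coordinates of $\bigoplus_{\omega\in\varOmega_k}\ell^2$ by level $n$ rather than by $\omega$ yields a unitary identification $\bigoplus_{\omega\in\varOmega_k}\ell^2\cong \ell^2_{\mathcal M^{(k)}}$ under which $\bigoplus_{\omega\in\varOmega_k}S_\omega^{(k)}$ becomes the operator valued unilateral weighted shift $W^{(k)}$ with weights $W_n^{(k)}$, where $W_n^{(k)}$ is the diagonal operator on $\ell^2(\varOmega_k)$ sending $e_\omega$ to $s_n^{(k,\omega)}e_\omega$. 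The hypothesis that $\{S_\omega^{(k)}\}_\omega$ is uniformly bounded is exactly what makes $\{W_n^{(k)}\}_n$ uniformly bounded, and injectivity of the shifts makes each $W_n^{(k)}$ an injective diagonal operator, hence with dense range. Thus the hypotheses of Corollary \ref{dom2} are met, so (i) holds if and only if there is a unitary isomorphism $U_0\in\B(\mathcal M^{(1)},\mathcal M^{(2)})$ with $|W_n^{(1)}|=U_0^*|W_n^{(2)}|U_0$, equivalently $U_0|W_n^{(1)}|=|W_n^{(2)}|U_0$, for every $n\in\zbb_+$.

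It then remains to decide when such a $U_0$ exists, and here I would pass to joint spectral data. Each $|W_n^{(k)}|$ is diagonal with $|W_n^{(k)}|e_\omega=|s_n^{(k,\omega)}|e_\omega$, so the $e_\omega$ are simultaneous eigenvectors of the commuting family $\{|W_n^{(k)}|\}_n$, the joint eigenvalue attached to $e_\omega$ being the sequence $\mathbf a^{(k,\omega)}:=(|s_n^{(k,\omega)}|)_{n\ge 0}$. Grouping the $e_\omega$ by the value of $\mathbf a^{(k,\omega)}$ gives an orthogonal decomposition $\mathcal M^{(k)}=\bigoplus_{\lambdab}\mathcal H^{(k)}_{\lambdab}$ into joint eigenspaces, where $\mathcal H^{(k)}_{\lambdab}$ is the closed linear span of $\{e_\omega:\mathbf a^{(k,\omega)}=\lambdab\}$ and $\dim\mathcal H^{(k)}_{\lambdab}=\card{\{\omega\in\varOmega_k:\mathbf a^{(k,\omega)}=\lambdab\}}$. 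The intertwining relation $U_0|W_n^{(1)}|=|W_n^{(2)}|U_0$ forces $U_0$ to carry each joint eigenspace onto the one with the same joint eigenvalue, i.e.\ $U_0\mathcal H^{(1)}_{\lambdab}=\mathcal H^{(2)}_{\lambdab}$ (applying the relation to $v\in\mathcal H^{(1)}_{\lambdab}$ for one inclusion and to $U_0^*$ for the reverse). Hence such a $U_0$ exists precisely when $\dim\mathcal H^{(1)}_{\lambdab}=\dim\mathcal H^{(2)}_{\lambdab}$ for every sequence $\lambdab$: the forward direction is the computation just made, and conversely one builds $U_0$ by choosing, for each $\lambdab$, a bijection between the equinumerous index sets $\{\omega\in\varOmega_1:\mathbf a^{(1,\omega)}=\lambdab\}$ and $\{\omega\in\varOmega_2:\mathbf a^{(2,\omega)}=\lambdab\}$ and mapping the corresponding basis vectors accordingly.

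Finally I would translate this multiplicity condition into the desired bijection. Equality of $\dim\mathcal H^{(1)}_{\lambdab}$ and $\dim\mathcal H^{(2)}_{\lambdab}$ for all $\lambdab$ says exactly that $\{\mathbf a^{(1,\omega)}\}_{\omega\in\varOmega_1}$ and $\{\mathbf a^{(2,\omega)}\}_{\omega\in\varOmega_2}$ agree as multisets, which is equivalent to the existence of a bijection $\varPhi\colon\varOmega_1\to\varOmega_2$ with $\mathbf a^{(1,\omega)}=\mathbf a^{(2,\varPhi(\omega))}$ for all $\omega$. By the scalar case of Theorem \ref{Fran4}, equality of the weight modulus sequences of two injective scalar shifts is precisely their unitary equivalence; if, as is standard here, the shifts are normalized to have nonnegative weights, this unitary equivalence is literally equality, giving $S^{(2)}_{\varPhi(\omega)}=S^{(1)}_\omega$, which is (ii). The reverse implication (ii)$\Rightarrow$(i) needs none of this machinery: such a $\varPhi$ induces the block permutation unitary $\bigoplus_{\omega\in\varOmega_1}\ell^2\to\bigoplus_{\omega\in\varOmega_2}\ell^2$ that is the identity on each summand and intertwines the two orthogonal sums.

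The step I expect to be the real work is the joint spectral analysis of the second paragraph, in particular tracking the dimensions of the joint eigenspaces as genuine cardinals when $\varOmega_k$ is uncountable or the multiplicities are infinite; the reduction to diagonal operator valued shifts together with Corollary \ref{dom2} is what makes this manageable. I would also flag explicitly that the literal equality asserted in (ii) presupposes the convention that the weights are nonnegative, since otherwise only the equality of modulus sequences (equivalently, unitary equivalence of the individual shifts) can be recovered.
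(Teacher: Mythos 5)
Your proposal is correct and follows essentially the same route as the paper's own proof: the paper likewise regroups each orthogonal sum into an operator valued unilateral weighted shift with injective diagonal weights, applies Corollary~\ref{dom2} to obtain the intertwining unitary $U_0$, and then extracts the bijection by comparing cardinalities (dimensions) of the joint eigenspaces $\bigcap_{n=0}^{\infty}\ker(\lambda_n I - W_n^{(k)})$, which are exactly your spaces $\mathcal H^{(k)}_{\lambdab}$. The only organizational differences are that the paper first equalizes the index sets (via $\dim\ker$ of the adjoints) and normalizes the weights to be positive by Shields' result before invoking Corollary~\ref{dom2}, whereas you work with the moduli $|W_n^{(k)}|$ throughout and appeal to the positivity convention only at the end --- the same convention that the paper's ``without loss of generality'' normalization implicitly relies on for the literal equality asserted in (ii).
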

   \begin{proof}
(i)$\Rightarrow$(ii) For $k=1,2$, we denote by
$\hh^{(k)}$ the Hilbert space in which the orthogonal
sum $T^{(k)}:=\bigoplus_{\omega\in \varOmega_k}
S_{\omega}^{(k)}$ acts and choose an orthonormal basis
$\{e_{\omega, n}^{(k)}\}_{\omega \in \varOmega_k,n \in
\zbb_+}$ of $\hh^{(k)}$ such that $T^{(k)} e_{\omega,
n}^{(k)} = \lambda_{\omega, n}^{(k)} e_{\omega,
n+1}^{(k)}$ for all $\omega \in \varOmega_k$ and $n\in
\zbb_+$, where $\lambda_{\omega, n}^{(k)}$ are nonzero
complex numbers. Clearly, the space $\bigoplus_{n \in
\zbb_+} \langle e_{\omega, n}^{(k)} \rangle$ reduces
$T^{(k)}$ to an operator which is unitarily equivalent
to $S_{\omega}^{(k)}$ for all $w\in \varOmega_k$ and
$k=1,2$.

Assume that $T^{(1)}\cong T^{(2)}$. First, we note
that there is no loss of generality in assuming that
$\varOmega_1 = \varOmega_2=:\varOmega$ because, due to
$(T^{(1)})^*\cong (T^{(2)})^*$, we have
   \begin{align*}
\card{\varOmega_1}&=\dim \bigg(\bigoplus_{\omega\in
\varOmega_1} \ker \big(S_{\omega}^{(1)}\big)^* \bigg)
= \dim \ker (T^{(1)})^*
   \\
& = \dim \ker (T^{(2)})^* = \card{\varOmega_2}.
   \end{align*}
In turn, by \cite[Corollary~ 1]{Shi}, we can assume
that $\lambda_{\omega, n}^{(k)} > 0$ for all $\omega
\in \varOmega,$ $n\in \zbb_+$ and $k=1,2$. For
$k=1,2$, we denote by $\mathcal M^{(k)}$ the
orthogonal sum $\bigoplus_{\omega \in \varOmega}
\langle e_{\omega, 0}^{(k)} \rangle$ and by $W^{(k)}$
the operator valued unilateral weighted shift on
$\ell^2_{\mathcal M^{(k)}}$ with weights
$\{W_n^{(k)}\}_{n=0}^{\infty} \subseteq \B(\mathcal
M^{(k)})$ uniquely determined by the following
equations
   \begin{align*}
W_n^{(k)} e_{\omega, 0}^{(k)} = \lambda_{\omega,
n}^{(k)} e_{\omega, 0}^{(k)}, \quad \omega \in
\varOmega, \, n \in \zbb_+, \, k=1,2.
   \end{align*}
($W^{(k)}$ is well-defined because $\|T^{(k)}\| =
\sup_{n\in \zbb_+} \sup_{\omega\in \varOmega}
\lambda_{\omega, n}^{(k)} = \sup_{n\in
\zbb_+}\|W_n^{(k)}\|$.) We claim that $T^{(k)} \cong
W^{(k)}$ for $k=1,2$. Indeed, for $k=1,2,$ there
exists a unique unitary isomorphism $V_k\in
\B(\hh^{(k)},\ell^2_{\mathcal M^{(k)}})$ such that
   \begin{align*}
V_k e_{\omega, n}^{(k)} =
(\underset{\substack{\phantom{a} \\ \langle 0
\rangle}} 0, \ldots, 0, \underset{\langle n
\rangle}{e_{\omega, 0}^{(k)}}, 0, \dots), \quad \omega
\in \varOmega,\, n\in \zbb_+.
   \end{align*}
It is a matter of routine to show that $V_k T^{(k)}
e_{\omega, n}^{(k)}= W^{(k)} V_k e_{\omega, n}^{(k)}$
for all $\omega \in \varOmega,$ $n\in \zbb_+$ and
$k=1,2.$ This implies the claimed unitary equivalence.
As a consequence, we see that $W^{(1)} \cong W^{(2)}$.
Hence, by Corollary~ \ref{dom2}, there exists a
unitary isomorphism $U_0\in \B(\mathcal M^{(1)},
\mathcal M^{(2)})$ such that
   \begin{align} \label{Jur3}
U_0 W_n^{(1)} = W_n^{(2)} U_0, \quad n\in \zbb_+.
   \end{align}
Given $k,l \in \{1,2\}$ and $\omega_0 \in \varOmega$,
we set
   \begin{align*}
\varOmega_{\omega_0}^{(k,l)}=\big\{\omega\in \varOmega
\colon \lambda_{\omega, n}^{(k)} = \lambda_{\omega_0,
n}^{(l)} \, \forall n \in \zbb_+\big\}=\big\{\omega
\in \varOmega \colon S_{\omega}^{(k)} =
S_{\omega_0}^{(l)}\big\}.
   \end{align*}
Our next goal is to show that
   \begin{align} \label{Jur1}
\card \varOmega_{\omega_0}^{(1,1)} = \card
\varOmega_{\omega_0}^{(2,1)}, \quad \omega_0 \in
\varOmega.
   \end{align}
For this, fix $\omega_0 \in \varOmega$. It follows
from the injectivity of $U_0$ that \allowdisplaybreaks
   \begin{align} \notag
U_0 \bigg(\bigcap_{n=0}^{\infty} \ker
(\lambda_{\omega_0, n}^{(1)} I - W_n^{(1)})\bigg) & =
\bigcap_{n=0}^{\infty} U_0 \Big (\ker
(\lambda_{\omega_0, n}^{(1)} I - W_n^{(1)})\Big )
   \\ \label{Ko}
& \hspace{-1.7ex}\overset{\eqref{Jur3}}=
\bigcap_{n=0}^{\infty} \ker (\lambda_{\omega_0,
n}^{(1)} I - W_n^{(2)}).
   \end{align}
Since
   \begin{align*}
\ker (\lambda_{\omega_0, n}^{(1)} I - W_n^{(k)}) =
\bigoplus_{\substack{\omega\in \varOmega\colon \\
\lambda_{\omega, n}^{(k)} = \lambda_{\omega_0,
n}^{(1)}}} \langle e_{\omega, 0}^{(k)} \rangle, \quad
n \in \zbb_+, \, k=1,2,
   \end{align*}
and consequently
   \begin{align*}
\bigcap_{n=0}^{\infty} \ker (\lambda_{\omega_0,
n}^{(1)} I - W_n^{(k)}) = \bigoplus_{\omega\in
\varOmega_{\omega_0}^{(k,1)}} \langle e_{\omega,
0}^{(k)} \rangle, \quad k=1,2,
   \end{align*}
we deduce that \allowdisplaybreaks
   \begin{align*}
\card \varOmega_{\omega_0}^{(1,1)} &= \dim
\bigoplus_{\omega\in \varOmega_{\omega_0}^{(1,1)}}
\langle e_{\omega, 0}^{(1)} \rangle = \dim
\bigcap_{n=0}^{\infty} \ker (\lambda_{\omega_0,
n}^{(1)} I - W_n^{(1)})
   \\
& \hspace{-1.7ex}\overset{\eqref{Ko}}= \dim
\bigcap_{n=0}^{\infty} \ker (\lambda_{\omega_0,
n}^{(1)} I - W_n^{(2)}) = \card
\varOmega_{\omega_0}^{(2,1)}.
   \end{align*}
Hence, the condition \eqref{Jur1} holds. Since by
\eqref{Jur3}, $U_0^* W_n^{(2)} = W_n^{(1)} U_0^*$ for
all $n\in \zbb_+$, we infer from \eqref{Jur1} that
   \begin{align}  \label{Jur2}
\card \varOmega_{\omega_0}^{(2,2)} = \card
\varOmega_{\omega_0}^{(1,2)}, \quad \omega_0 \in
\varOmega.
   \end{align}
Using the equivalence relations $\mathcal R_k
\subseteq \varOmega \times \varOmega$, $k=1,2$,
defined by
   \begin{align*}
\omega \mathcal R_k \omega^{\prime} \iff
S_{\omega}^{(k)} = S_{\omega^{\prime}}^{(k)}, \quad
\omega, \omega^{\prime} \in \varOmega, \, k,l \in
\{1,2\},
   \end{align*}
and combining \eqref{Jur1} with \eqref{Jur2} we obtain
(ii).

(ii)$\Rightarrow$(i) This implication is obvious.
   \end{proof}
   \section{\label{Sec5}Unitary equivalence of $2$-isometries satisfying
the kernel condition} In view of the well-known
characterizations of the unitary equivalence of normal
operators (see e.g., \cite[Chap.\ 7]{b-s}), Lemma~
\ref{unrown} reduces the question of unitary
equivalence of $2$-isometries satisfying the kernel
condition to the consideration of pure operators in
this class. By Theorem~ \ref{Zak1} below, a
$2$-isometry satisfying the kernel condition is pure
if and only if it is unitarily equivalent to an
operator valued unilateral weighted shift $W$ on
$\ell^2_{\mathcal M}$ with weights
$\{W_n\}_{n=0}^{\infty}$ defined by \eqref{wagi}. Our
first goal is to give necessary and sufficient
conditions for two such operators to be unitarily
equivalent (see Theorem~ \ref{fincyc}). Next, we
discuss the question of when a pure $2$-isometry
satisfying the kernel condition is unitarily
equivalent to an orthogonal sum of unilateral weighted
shifts (see Theorem~ \ref{fincyc2}). This enables us
to answer the question of whether all finitely
multicyclic pure $2$-isometries satisfying the kernel
condition are necessarily finite orthogonal sums of
weighted shifts (see Corollary~ \ref{mulcyc}).

Before stating a model theorem for pure $2$-isometries
satisfying the kernel condition, we list some basic
properties of the sequence $\{\xi_n\}_{n=0}^{\infty}$
of self-maps of the interval $[1,\infty)$ which are
defined by
   \begin{align} \label{xin}
\xi_n(x) = \sqrt{\frac{1+ (n+1)(x^2-1)}{1+ n(x^2-1)}},
\quad x \in [1,\infty), \, n\in \zbb_+.
   \end{align}
   \begin{lemma} \label{xin11}
   \mbox{\phantom{a}}
\begin{enumerate}
   \item[(i)] $\xi_0$ is the
identity map,
   \item[(ii)] $\xi_{m+n} = \xi_m \circ \xi_n$
for all $m,n\in \zbb_+$,
   \item[(iii)] $\xi_n(1)=1$ for all $n\in \zbb_+$,
   \item[(iv)] $\xi_{n}(x) > \xi_{n+1}(x) > 1$ for all
$x \in (1,\infty)$ and $n\in \zbb_+,$
   \item[(v)] if $\{\zeta_n\}_{n=0}^{\infty}$ is a sequence of
self-maps of $[1,\infty)$ such that $\zeta_0$ is the
identity map and $\zeta_{n+1} =
\sqrt{\frac{2\zeta_{n}^2-1}{\zeta_{n}^2}}$ for all
$n\in \zbb_+,$ then $\zeta_{n}= \xi_{n}$ for all $n\in
\zbb_+.$
   \end{enumerate}
   \end{lemma}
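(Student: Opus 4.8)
The plan is to reduce the whole lemma to a single algebraic identity. Setting $t = x^2 - 1 \Ge 0$, a direct computation from \eqref{xin} gives
\[
\xi_n(x)^2 - 1 = \frac{t}{1 + nt}, \quad x \in [1,\infty), \, n \in \zbb_+,
\]
equivalently, for $x \in (1,\infty)$,
\[
\frac{1}{\xi_n(x)^2 - 1} = \frac{1}{x^2 - 1} + n.
\]
From the first form (i) and (iii) are immediate: taking $n = 0$ yields $\xi_0(x)^2 = x^2$, whence $\xi_0(x) = x$ (as $x \Ge 1$), and taking $x = 1$, i.e.\ $t = 0$, yields $\xi_n(1) = 1$.

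For (ii) I would use the additive form. Writing $\phi(y) = 1/(y^2 - 1)$ for $y \in (1,\infty)$, the displayed identity reads $\phi(\xi_n(x)) = \phi(x) + n$. Hence, for $x \in (1,\infty)$,
\[
\phi(\xi_m(\xi_n(x))) = \phi(\xi_n(x)) + m = \phi(x) + (m+n) = \phi(\xi_{m+n}(x)).
\]
Since $\phi$ is strictly decreasing on $(1,\infty)$, it is injective there, so $\xi_m(\xi_n(x)) = \xi_{m+n}(x)$; the remaining case $x = 1$ follows from (iii). For (iv), the relation $\phi(\xi_n(x)) = \phi(x) + n$ shows that $\phi(\xi_n(x))$ is finite and strictly increasing in $n$ for $x \in (1,\infty)$; since $\phi$ is decreasing and $\phi(y) \to \infty$ as $y \to 1^{+}$, this forces $\xi_n(x) > \xi_{n+1}(x) > 1$.

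For (v) the key step is to verify that the sequence $\{\xi_n\}$ itself satisfies the prescribed recursion. Using $\xi_n(x)^2 = (1 + (n+1)t)/(1+nt)$, a short computation gives
\[
\frac{2\xi_n(x)^2 - 1}{\xi_n(x)^2} = 2 - \frac{1 + nt}{1 + (n+1)t} = \frac{1 + (n+2)t}{1 + (n+1)t} = \xi_{n+1}(x)^2,
\]
so that $\xi_{n+1} = \sqrt{(2\xi_n^2 - 1)/\xi_n^2}$ (nonnegative root), while $\xi_0$ is the identity by (i). Since $\zeta_0 = \xi_0$ and $\zeta$ obeys the same recursion, a straightforward induction on $n$ yields $\zeta_n = \xi_n$.

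I do not expect any serious obstacle; the whole lemma is elementary. The only point requiring a little care is recognising the invariant $\phi(\xi_n(x)) = \phi(x) + n$ that linearises the family, after which (i)--(iv) become transparent. For (v) one should also note in passing that the recursion is well posed, i.e.\ $2 - 1/\zeta_n^2 \in [1,2]$ whenever $\zeta_n \Ge 1$, so that each $\zeta_n$ is indeed a self-map of $[1,\infty)$.
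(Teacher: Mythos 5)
Your proof is correct in all five parts. The paper itself states Lemma \ref{xin11} without proof (it is treated as an elementary verification), so there is no argument in the paper to compare against; your write-up simply supplies the missing details, and it does so efficiently. The linearising invariant $\tfrac{1}{\xi_n(x)^2-1}=\tfrac{1}{x^2-1}+n$ for $x\in(1,\infty)$ is exactly the right observation: it makes (ii) and (iv) one-line consequences of the injectivity and monotonicity of $y\mapsto 1/(y^2-1)$, and your algebraic check that $\{\xi_n\}$ satisfies the recursion $\xi_{n+1}^2=(2\xi_n^2-1)/\xi_n^2$, together with the induction from $\zeta_0=\xi_0$, settles (v); the remark on well-posedness of the recursion ($\zeta_n\Ge 1$ implies $2-1/\zeta_n^2\in[1,2]$) is a nice touch, though not strictly needed since the hypothesis already asserts the $\zeta_n$ are self-maps of $[1,\infty)$.
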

The following model theorem, which is a part of
\cite[Theorem~ 2.5]{A-C-J-S}, classifies (up to
unitary equivalence) pure $2$-isometries satisfying
the kernel condition.
   \begin{theorem} \label{Zak1}
If $\hh \neq \{0\}$ and $T\in \B(\hh)$, then the
following are equivalent{\em :}
   \begin{enumerate}
   \item[(i)] $T$ is an analytic $2$-isometry satisfying the kernel
condition,
   \item[(ii)] $T$ is a completely non-unitary
$2$-isometry satisfying the kernel condition,
   \item[(iii)] $T$ is a pure $2$-isometry satisfying the kernel
condition,
   \item[(iv)] $T$ is unitarily equivalent to
an operator valued unilateral weighted shift $W$ on
$\ell^2_{\mathcal M}$ with
weights\footnote{\;\label{Foot2}Note that the sequence
$\{W_n\}_{n=0}^{\infty} \subseteq \B(\mathcal M)$
defined by \eqref{wagi} is uniformly bounded, and
consequently $W\in \B(\ell^2_{\mathcal M})$.}
$\{W_n\}_{n=0}^{\infty}$ given by
   \begin{align} \label{wagi}
   \left.
   \begin{gathered} W_n = \int_{[1,\infty)}
\xi_n(x) E(d x), \quad n \in \zbb_+,
   \\
   \begin{minipage}{63ex}
where $E$ is a compactly supported $\B(\mathcal
M)$-valued Borel spectral measure on the interval
$[1,\infty)$.
   \end{minipage}
   \end{gathered}
   \; \right\}
   \end{align}
   \end{enumerate}
   \end{theorem}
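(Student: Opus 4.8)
The plan is to run the cycle (i)$\Rightarrow$(ii)$\Rightarrow$(iii)$\Rightarrow$(iv)$\Rightarrow$(i), with the model construction in (iii)$\Rightarrow$(iv) carrying essentially all the weight. The implication (i)$\Rightarrow$(ii) is immediate since every analytic operator is completely non-unitary. For (ii)$\Rightarrow$(iii) I would use the decomposition $T = N \oplus P$ into its normal and pure parts (Lemma~\ref{unrown}): the identity passes to reducing summands, so $N$ is a normal $2$-isometry, and the spectral theorem applied to $I - 2N^*N + N^{*2}N^2 = 0$ gives $(|z|^2-1)^2 = 0$ on $\sigma(N)$, whence $|z| \equiv 1$ and $N$ is unitary; complete non-unitarity then forces $N$ to act on $\{0\}$, i.e.\ $T = P$ is pure. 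Finally (iv)$\Rightarrow$(i) is a direct verification on the model $W$: the weights $W_n = \int \xi_n\, dE$ commute and are positive, so by \eqref{aopws2} the operators $W^*W$ and $W^{*2}W^2$ are diagonal with entries $W_n^2$ and $W_n^2 W_{n+1}^2$, and the $2$-isometry identity reduces to $W_{n+1}^2 = 2I - W_n^{-2}$, i.e.\ to $\xi_{n+1}^2 = 2 - \xi_n^{-2}$, which holds either directly from \eqref{xin} or by Lemma~\ref{xin11}(i),(v); since each $\xi_n \Ge 1$ (Lemma~\ref{xin11}(iii),(iv)) the weights are injective, so by \eqref{aopws} $\ker W^* = \mathcal M \oplus 0 \oplus \cdots$ is $W^*W$-invariant, giving \eqref{kc}, while $W^n$ is supported on coordinates $\Ge n$, giving analyticity.

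For the core implication (iii)$\Rightarrow$(iv), first recall that a $2$-isometry is $2$-hyperexpansive, hence left-invertible with $T^*T \Ge I$ by \eqref{2hypcon}; thus $\ran T$ is closed and $\mathcal M := \ker T^* = \hh \ominus \ran T$ is nonzero (otherwise $T$ would be an invertible, hence unitary, $2$-isometry, contradicting purity). The kernel condition \eqref{kc} says precisely that $A := T^*T|_{\mathcal M}$ is a well-defined positive operator on $\mathcal M$ with $A \Ge I$; I let $B = A^{1/2}$, take $E$ to be the compactly supported, $[1,\infty)$-concentrated spectral measure of $B$, and define $W_n = \xi_n(B) = \int \xi_n\, dE$ as in \eqref{wagi}. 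The whole construction rests on the $2$-isometry identity $T^{*n}T^n = I + n(T^*T - I)$, which follows from the linearity of $n \mapsto \|T^n x\|^2$.

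The two key structural facts are orthogonality and completeness of the family $\{T^n \mathcal M\}_{n=0}^{\infty}$. For orthogonality, take $m, m' \in \mathcal M$ and $0 \Le p < q$; writing $\langle T^p m, T^q m'\rangle = \langle m, (I + p(T^*T - I))T^{q-p}m'\rangle$ via the identity, the term $\langle m, T^{q-p}m'\rangle$ vanishes because $T^* m = 0$, while the remaining term equals $p\langle T^*T m, T^{q-p}m'\rangle$, which vanishes because \eqref{kc} places $T^*T m$ in $\ker T^* = (\ran T)^\perp$ and $T^{q-p}m' \in \ran T$. Thus the $T^n \mathcal M$ are mutually orthogonal — this is exactly where the kernel condition enters. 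For completeness, purity forces analyticity: by the Wold-type decomposition of a $2$-isometry into a unitary part on $\bigcap_n \overline{\ran T^n}$ and an analytic part, the unitary summand is trivial, and analyticity together with left-invertibility yields $\hh = \bigvee_n T^n \mathcal M$. Combining the two facts gives the orthogonal decomposition $\hh = \bigoplus_{n=0}^{\infty} T^n \mathcal M$.

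It then remains to assemble the intertwining unitary. Using \eqref{xin}, the product $W_{[n]} := W_{n-1}\cdots W_0 = \prod_{j=0}^{n-1}\xi_j(B)$ telescopes to $W_{[n]}^2 = I + n(A - I)$, which matches $\langle T^{*n}T^n m, m\rangle = \langle (I + n(A-I))m, m\rangle$ for $m \in \mathcal M$; hence $\|T^n m\| = \|W_{[n]} m\|$. I would define $U\colon \hh \to \ell^2_{\mathcal M}$ by sending $T^n m$ to the sequence with $n$-th coordinate $W_{[n]} m$ and all others zero. Orthogonality and the norm identity make $U$ isometric, invertibility of each $W_{[n]}$ (so that $W_{[n]}\mathcal M = \mathcal M$) makes it onto, and the relation $W_n W_{[n]} = W_{[n+1]}$ gives $UT = WU$ in one line, so $T \cong W$. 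The main obstacle is the completeness step: the kernel condition buys orthogonality of the wandering iterates, but turning the a priori only algebraic Wold direct sum of a pure $2$-isometry into a genuine orthogonal decomposition spanning $\hh$ is the delicate point, and it is precisely what upgrades $U$ from an isometry on $\bigoplus_n T^n \mathcal M$ to a unitary on all of $\hh$.
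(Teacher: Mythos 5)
Your proof is correct, but note that this paper never proves Theorem~\ref{Zak1} itself: it is imported verbatim as part of \cite[Theorem~2.5]{A-C-J-S}, and the only place its technique surfaces here is the proof of Theorem~\ref{2isscs-t}(i), which explicitly reprises the argument of \cite{A-C-J-S}. Measured against that template, your route shares the skeleton but diverges in the two substantive steps. The shared skeleton: orthogonality of the spaces $T^n(\ker T^*)$ (the point where the kernel condition enters) plus completeness $\hh=\bigvee_n T^n(\ker T^*)$ from Shimorin's Wold-type theorem \cite[Theorem~3.6]{Sh} once purity kills the unitary part. The divergence: the paper's route (as reprised in Theorem~\ref{2isscs-t}(i)) cites the orthogonality from \cite[Theorem~2.5]{A-C-J-S}, chooses abstract unitaries $V_n\colon T^n(\ker T^*)\to\ker T^*$, obtains an operator valued weighted shift with invertible weights $V_{n+1}\varLambda_n V_n^{-1}$, and only then invokes Jab{\l}o\'nski's normalization results \cite[Proposition~2.2 and Theorem~3.3]{Ja-3} together with Lemma~\ref{xin11}(v) to force the weights into the form \eqref{wagi}; you instead prove the orthogonality by a direct two-line computation and define the weights outright as $W_n=\xi_n(B)$ with $B=(T^*T|_{\ker T^*})^{1/2}$, verifying the intertwining by hand via the telescoping identity $\prod_{j=0}^{n-1}\xi_j(x)^2=1+n(x^2-1)$ matched against $T^{*n}T^n=I+n(T^*T-I)$. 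Your version is more elementary and self-contained (no appeal to \cite{Ja-3}) and exhibits the spectral measure $E$ concretely as that of $B$; the paper's version buys generality, since the machinery of \cite{Ja-3} handles hyperexpansive operator valued shifts that do not arrive pre-packaged in this special form. Your remaining implications --- (i)$\Rightarrow$(ii) trivially, (ii)$\Rightarrow$(iii) via Morrel's normal/pure decomposition plus ``normal $2$-isometry $=$ unitary,'' and (iv)$\Rightarrow$(i) by the diagonal computation with the recursion $\xi_{n+1}^2=2-\xi_n^{-2}$ --- are all sound. One caveat: your phrase ``analyticity together with left-invertibility yields $\hh=\bigvee_n T^n\mathcal M$'' must not be read as a general fact (the wandering subspace property fails for general analytic left-invertible operators); it is precisely Shimorin's theorem for concave operators, which is what you, like the paper, are implicitly invoking, so make that citation explicit.
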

Now we answer the question of when two pure
$2$-isometries satisfying the kernel condition are
unitarily equivalent. We refer the reader to
\cite[Section~ 2.2]{JJS} (resp., \cite[Chapter~
7]{b-s}) for necessary information on the diagonal
operators (resp., the spectral type and the
multiplicity function of a selfadjoint operator, which
is a complete system of its unitary invariants).
   \begin{theorem} \label{fincyc}
Suppose $W\in \B(\ell^2_{\mathcal M})$ is an operator
valued unilateral weighted shift with weights
$\{W_n\}_{n=0}^{\infty}$ given by
   \begin{align*}
W_n = \int_{[1,\infty)} \xi_n(x) E(d x), \quad n \in
\zbb_+,
   \end{align*}
where $\{\xi_n\}_{n=0}^{\infty}$ are as in \eqref{xin}
and $E$ is a compactly supported $\B({\mathcal
M})$-valued Borel spectral measure on $[1,\infty)$.
Let $({\widetilde W}, {\widetilde{\mathcal M}},
\{\widetilde W_n\}_{n=0}^{\infty}, {\widetilde E})$ be
another such system. Then the following conditions are
equivalent{\em :}
   \begin{enumerate}
   \item[(i)] $W\cong \widetilde W$,
   \item[(ii)] $W_0\cong \widetilde W_0$,
   \item[(iii)] the spectral types and the multiplicity functions
of $W_0$ and $\widetilde W_0$ coincide,
   \item[(iv)] the spectral measures $E$ and
$\widetilde E$ are unitarily equivalent.
   \end{enumerate}
Moreover, if the operators $W_0$ and $\widetilde W_0$
are diagonal, then {\em (ii)} holds if and only if
   \begin{enumerate}
   \item[(v)] $\dim \ker(\lambda I - W_0) = \dim \ker(\lambda I -
\widetilde W_0)$ for all $\lambda \in \C$.
   \end{enumerate}
   \end{theorem}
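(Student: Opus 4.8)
The plan is to reduce the entire statement to the single positive operator $W_0$, exploiting the fact that all the weights are functions of it. The key observation is that $\xi_0$ is the identity map (Lemma~\ref{xin11}(i)), so that $W_0 = \int_{[1,\infty)} x \, E(dx)$. Hence $W_0$ is selfadjoint with $W_0 \Ge I$, its spectrum lies in $[1,\infty)$, and by the uniqueness part of the spectral theorem $E$ is precisely the spectral measure of $W_0$ (and likewise $\widetilde E$ is the spectral measure of $\widetilde W_0$). Applying the Borel functional calculus to the bounded continuous functions $\xi_n$, I would record that $W_n = \xi_n(W_0)$ and $\widetilde W_n = \xi_n(\widetilde W_0)$ for every $n$, which is the identity driving the whole argument.

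First I would dispose of the equivalences (ii)$\iff$(iii)$\iff$(iv), which are pure spectral theory. Since $E$ and $\widetilde E$ are the spectral measures of the selfadjoint operators $W_0$ and $\widetilde W_0$, the unitary equivalence of $E$ and $\widetilde E$ is by definition the unitary equivalence of $W_0$ and $\widetilde W_0$, giving (ii)$\iff$(iv); and (ii)$\iff$(iii) is the classical fact that the spectral type together with the multiplicity function is a complete system of unitary invariants for a selfadjoint operator (see \cite[Chapter~ 7]{b-s}).

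The heart of the matter is (i)$\iff$(ii), which I would establish through Theorem~\ref{Fran4}. Because $\xi_n(x) \Ge 1$ on $[1,\infty)$ and $E$ is compactly supported, each weight satisfies $W_n \Ge I$; in particular every $W_n$ and $\widetilde W_n$ is injective and has dense range. Being functions of $W_0$, the weights are positive and mutually commuting, so $|W_n| = W_n$ and $|W_n|$ commutes with $W_m$ for all $m,n$; thus the hypotheses of Theorem~\ref{Fran4} are met. That theorem then yields $W \cong \widetilde W$ if and only if there is a unitary isomorphism $U_0\in \B(\mathcal M, \widetilde{\mathcal M})$ with $W_n = U_0^* \widetilde W_n U_0$ for all $n$. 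Taking $n=0$ gives $W_0 \cong \widetilde W_0$, which is (i)$\Rightarrow$(ii). Conversely, if $W_0 = U_0^* \widetilde W_0 U_0$ for a unitary $U_0$, then the intertwining property of the functional calculus gives $W_n = \xi_n(W_0) = U_0^* \xi_n(\widetilde W_0) U_0 = U_0^* \widetilde W_n U_0$ for every $n$, so condition (ii) of Theorem~\ref{Fran4} holds and hence $W \cong \widetilde W$.

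For the \emph{moreover} clause I would observe that when $W_0$ and $\widetilde W_0$ are diagonal selfadjoint operators, $\dim \ker(\lambda I - W_0)$ records the multiplicity of $\lambda$ as an eigenvalue (and equals $0$ when $\lambda$ is not one); two such operators are unitarily equivalent exactly when these eigenvalue multiplicities agree for every $\lambda \in \C$, which is condition (v). The main point of the whole proof is really the reduction itself: once one sees that $\xi_0$ being the identity forces every weight to be a function of the positive operator $W_0$, the positivity, injectivity and commutativity hypotheses needed for Theorem~\ref{Fran4} are immediate, and all that remains is standard spectral multiplicity theory; so the only mild obstacle is verifying these hypotheses and the functional-calculus identity $W_n = \xi_n(W_0)$ cleanly.
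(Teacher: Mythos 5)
Your proposal is correct and follows essentially the same route as the paper: both reduce everything to $W_0$ via the observation that $\xi_0$ is the identity (so $E$, $\widetilde E$ are the spectral measures of $W_0$, $\widetilde W_0$), verify that the positive commuting weights with $W_n \Ge I$ satisfy the hypotheses of Theorem~\ref{Fran4}, and then use that theorem in both directions, with the converse propagated by intertwining the spectral measures (the paper cites \cite[Theorem~5.4.9]{b-s} where you invoke the functional-calculus identity $W_n=\xi_n(W_0)$ — the same argument in substance). The treatment of (ii)$\iff$(iii) via \cite[Chapter~7]{b-s} and of the diagonal \emph{moreover} clause also matches the paper.
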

   \begin{proof}
Since $\xi_0(x) = x$ for all $x \in [1,\infty)$, $E$
and $\widetilde E$ are the spectral measures of $W_0$
and $\widetilde W_0$, respectively. Hence, the
conditions (ii) and (iv) are equivalent. That (ii) and
(iii) are equivalent follows from \cite[Theorem~
7.5.2]{b-s}. Note that $\{W_n\}_{n=0}^{\infty}$ are
commuting positive bounded operators such that $W_n
\Ge I$ for all $n\in \zbb_+$. The same is true for
$\{\widetilde W_n\}_{n=0}^{\infty}$. Therefore, $W$
and $\widetilde W$ satisfy the assumptions of Theorem~
\ref{Fran4}.

(i)$\Rightarrow$(ii) This is a direct consequence of
Theorem~ \ref{Fran4}.

(iii)$\Rightarrow$(i) If $UE=\widetilde E U$, where
$U\in \B({\mathcal M}, \widetilde {\mathcal M})$ is a
unitary isomorphism, then by \cite[Theorem~
5.4.9]{b-s} $UW_n=\widetilde W_nU$ for $n \in \zbb_+$.
Hence, by Theorem~ \ref{Fran4}, $W\cong \widetilde W$.

It is a simple matter to show that if the operators
$W_0$ and $\widetilde W_0$ are diagonal, then the
conditions (ii) and (v) are equivalent. This completes
the proof.
   \end{proof}
It follows from Theorems~ \ref{Zak1} and \ref{fincyc}
that the spectral type and the multiplicity function
of the spectral measure of $W_0$ form a complete
system of unitary invariants for completely
non-unitary $2$-isometries satisfying the kernel
condition.

Theorem~ \ref{fincyc2} below answers the question of
when a completely non-unitary $2$-isometry satisfying
the kernel condition is unitarily equivalent to an
orthogonal sum of unilateral weighted shifts. In the
case when $\ell^2_{\mathcal M}$ is a separable Hilbert
space, this result can in fact be deduced from
\cite[Theorem~ 3.9]{lam-1}. There are two reasons why
we have decided to include the proof of Theorem~
\ref{fincyc2}. First, our result is stated for Hilbert
spaces which are not assumed to be separable. Second,
an essential part of the proof of Theorem~
\ref{fincyc2} will be used later in the proof of
Theorem~ \ref{2isscs-t}.
   \begin{theorem} \label{fincyc2}
Let $W\in \B(\ell^2_{\mathcal M})$ be an operator
valued unilateral weighted shift with weights
$\{W_n\}_{n=0}^{\infty}$ given by
   \begin{align} \label{wnint}
W_n = \int_{[1,\infty)} \xi_n(x) E(d x), \quad n \in
\zbb_+,
   \end{align}
where $\{\xi_n\}_{n=0}^{\infty}$ are as in \eqref{xin}
and $E$ is a compactly supported $\B({\mathcal
M})$-valued Borel spectral measure on $[1,\infty)$.
Then the following conditions are equivalent{\em :}
   \begin{enumerate}
   \item[(i)] $W \cong \bigoplus_{j\in J} S_j$, where $S_j$
are unilateral weighted shifts,
   \item[(ii)] $W_0$ is a diagonal operator.
   \end{enumerate}
Moreover, if {\em (i)} holds, then the index set $J$
is of cardinality $\dim \ker W^*.$
   \end{theorem}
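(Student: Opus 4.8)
The plan is to prove the two implications separately, exploiting the observation that, since $\xi_0(x)=x$ by Lemma~\ref{xin11}(i), the measure $E$ is exactly the spectral measure of $W_0$ and $W_n=\xi_n(W_0)$ for every $n\in\zbb_+$ by the $\B(\mathcal M)$-functional calculus; in particular all the weights are functions of $W_0$, and each $W_n\Ge I$ is invertible.

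For (ii)$\Rightarrow$(i) I would argue by a direct reduction. If $W_0$ is diagonal, say $W_0 e_\omega=x_\omega e_\omega$ for an orthonormal basis $\{e_\omega\}_{\omega\in J}$ of $\mathcal M$ with $x_\omega\in[1,\infty)$, then each $W_n$ is diagonal in the same basis with eigenvalue $\xi_n(x_\omega)$ on $e_\omega$. For each $\omega$ the subspace of $\ell^2_{\mathcal M}$ consisting of the sequences $(h_0,h_1,\dots)$ with every $h_n\in\langle e_\omega\rangle$ then reduces both $W$ and $W^*$, and the restriction of $W$ to it is unitarily equivalent to the scalar unilateral weighted shift $S_\omega$ with weights $\{\xi_n(x_\omega)\}_{n=0}^\infty$. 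Forming the orthogonal sum over $\omega\in J$ gives (i).

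For (i)$\Rightarrow$(ii), suppose $W\cong\bigoplus_{j\in J}S_j$ with each $S_j$ a scalar unilateral weighted shift. Since $W$ is a $2$-isometry by Theorem~\ref{Zak1} and an orthogonal sum is $2$-isometric precisely when each summand is, every $S_j$ is a $2$-isometric weighted shift; by \cite[Corollary~1]{Shi} we may assume its weights $\{a^{(j)}_n\}_{n=0}^\infty$ are positive. The $2$-isometry relation for a scalar shift reduces to the recursion $\big(a^{(j)}_{n+1}\big)^2=2-1/\big(a^{(j)}_n\big)^2$, which is precisely the recursion of Lemma~\ref{xin11}(v); since $\{\xi_n(x)\}_n$ satisfies it for every fixed $x$, uniqueness forces $a^{(j)}_n=\xi_n(x_j)$ with $x_j:=a^{(j)}_0\in[1,\infty)$. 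Exactly as in the proof of Theorem~\ref{desz1}, the orthogonal sum $\bigoplus_{j\in J}S_j$ is then unitarily equivalent to an operator valued weighted shift $\widetilde W$ on $\ell^2_{\widetilde{\mathcal M}}$ whose weights are $\widetilde W_n=\xi_n(\widetilde W_0)$ with $\widetilde W_0$ a diagonal operator having eigenvalues $x_j$; thus $\widetilde W$ is of the model form and $W\cong\widetilde W$. Theorem~\ref{fincyc} now yields $W_0\cong\widetilde W_0$, and since $\widetilde W_0$ is diagonal and diagonality is invariant under unitary equivalence, $W_0$ is diagonal, i.e.\ (ii) holds.

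For the cardinality assertion I would use \eqref{aopws}: because every $W_n\Ge I$ is invertible, $(h_0,h_1,\dots)\in\ker W^*$ forces $h_n=0$ for $n\ge 1$, so $\ker W^*=\{(h_0,0,0,\dots):h_0\in\mathcal M\}$ and $\dim\ker W^*=\dim\mathcal M$. On the other hand each scalar shift has $\dim\ker S_j^*=1$, whence $\dim\ker\big(\bigoplus_{j\in J}S_j\big)^*=\card{J}$, and comparing the two quantities via $W\cong\bigoplus_{j\in J}S_j$ gives $\card{J}=\dim\ker W^*$. The step I expect to demand the most care is the reduction in (i)$\Rightarrow$(ii) showing that the orthogonal sum of the scalar $2$-isometric shifts is genuinely of the model form with a \emph{diagonal} zeroth weight, so that Theorem~\ref{fincyc} becomes applicable; this is exactly the portion of the argument reused from Theorem~\ref{desz1}.
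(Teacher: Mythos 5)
Your proposal is correct and follows essentially the same route as the paper: the (ii)$\Rightarrow$(i) decomposition into reducing subspaces indexed by an eigenbasis of $W_0$ is just a rephrasing of the paper's explicit unitary $V\colon \ell^2_{\mathcal M}\to(\ell^2)^{\oplus\mathfrak n}$, and your (i)$\Rightarrow$(ii) argument (identify the weights of each $S_j$ as $\{\xi_n(x_j)\}_{n=0}^{\infty}$, assemble a model operator valued shift $\widetilde W$ with diagonal $\widetilde W_0$, then invoke Theorem~\ref{fincyc}) is exactly the paper's, with the only cosmetic differences that you derive the weight identification from the recursion in Lemma~\ref{xin11}(v) where the paper cites \cite[Lemma~6.1(ii)]{Ja-St}, and you reuse the construction from the proof of Theorem~\ref{desz1} where the paper reuses its own implication (ii)$\Rightarrow$(i).
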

   \begin{proof}
(ii)$\Rightarrow$(i) Since $W_0$ is a diagonal
operator and $W_0\Ge I$, there exists an orthonormal
basis $\{e_j\}_{j\in J}$ of $\mathcal M$ and a system
$\{\lambda_j\}_{j\in J} \subseteq [1,\infty)$ such
that
   \begin{align*}
W_0e_j = \lambda_j e_j, \quad j\in J.
   \end{align*}
By \eqref{aopws}, $\dim \ker W^*=\dim \mathcal
M=\text{the cardinality of $J$}$. Note that $E$, which
is the spectral measure of $W_0$, is given by
   \begin{align} \label{edel}
E(\varDelta) f = \sum_{j\in J}
\chi_{\varDelta}(\lambda_j) \langle f, e_j \rangle
e_j, \quad f \in \mathcal M, \, \varDelta \in
\borel{[1,\infty)}.
   \end{align}
Let $S_j$ be the unilateral weighted shift in $\ell^2$
with weights $\{\xi_n(\lambda_j)\}_{n=0}^{\infty}$. By
\cite[Lemma~ 6.1 and Proposition~ 6.2]{Ja-St}, $S_j$
is a $2$-isometry such that $\|S_j\|=\lambda_j$ for
every $j\in J$. Since $\sup_{j\in J} \lambda_j <
\infty$, we see that $\bigoplus_{j\in J} S_j \in
\B\big((\ell^2)^{\oplus{\mathfrak n}}\big)$, where
$\mathfrak{n}$ is the cardinal number of $J$. Define
the operator $V \colon \ell^2_{\mathcal M} \to
(\ell^2)^{\oplus{\mathfrak n}}$ by
   \begin{align*}
(V(h_0,h_1,\dots))_j = (\langle h_0, e_j\rangle,
\langle h_1, e_j\rangle, \ldots), \quad j \in J, \,
(h_0,h_1,\ldots) \in \ell^2_{\mathcal M}.
   \end{align*}
Since for every $(h_0,h_1,\ldots) \in \ell^2_{\mathcal
M}$,
   \begin{align*}
\sum_{j\in J} \sum_{n=0}^{\infty} |\langle h_n,
e_j\rangle|^2 = \sum_{n=0}^{\infty} \sum_{j\in J}
|\langle h_n, e_j\rangle|^2 = \sum_{n=0}^{\infty}
\|h_n\|^2 = \|(h_0,h_1,\ldots)\|^2,
   \end{align*}
the operator $V$ is an isometry. Note that for all
$j,k\in J$ and $m\in \zbb_+$,
   \begin{align*}
(V(\underset{\langle 0 \rangle} 0, \ldots, 0,
\underset{\langle m \rangle}{e_k}, 0, \ldots))_j =
   \begin{cases}
(0,0,\ldots) & \text{if } j\neq k,
   \\[1.5ex]
(\underset{\langle 0 \rangle} 0, \ldots, 0,
\underset{\langle m \rangle} 1, 0, \dots) & \text{if }
j=k,
   \end{cases}
   \end{align*}
which means that the range of $V$ is dense in
$(\ell^2)^{\oplus{\mathfrak n}}$. Thus $V$ is a
unitary isomorphism. It follows from \eqref{wnint}
that
   \begin{align} \label{wnej}
W_n e_j = \int_{[1,\infty)} \xi_n(x) E(dx) e_j
\overset{\eqref{edel}}= \xi_n(\lambda_j) e_j, \quad j
\in J, \, n \in \zbb_+.
   \end{align}
This implies that \allowdisplaybreaks
   \begin{align*}
VW(h_0,h_1,\ldots) & = \{(0, \langle W_0h_0,
e_j\rangle, \langle W_1h_1, e_j\rangle,
\ldots)\}_{j\in J}
   \\
& \hspace{-.7ex}\overset{\eqref{wnej}}= \{(0,
\xi_0(\lambda_j)\langle h_0, e_j\rangle,
\xi_1(\lambda_j)\langle h_1, e_j\rangle,
\ldots)\}_{j\in J}
   \\
& = \{S_j(V(h_0,h_1, \ldots))_j\}_{j\in J}
   \\
& = \Big(\bigoplus_{j\in J} S_j\Big)
V(h_0,h_1,\ldots), \quad (h_0,h_1,\ldots) \in
\ell^2_{\mathcal M}.
   \end{align*}

(i)$\Rightarrow$(ii) Suppose that $W\cong
\bigoplus_{j\in J} S_j$, where $S_j$ are unilateral
weighted shifts. Since $W$ is a $2$-isometry, so is
$S_j$ for every $j\in J$. Hence $S_j$ is injective for
every $j\in J$. As a consequence, there is no loss of
generality in assuming that the weights of $S_j$ are
positive (see \cite[Corollary~ 1]{Shi}). By
\cite[Lemma~ 6.1(ii)]{Ja-St}, for every $j\in J$ there
exists $\lambda_j \in [1,\infty)$ such that
$\{\xi_n(\lambda_j)\}_{n=0}^{\infty}$ are weights of
$S_j$. Let $\widetilde{\mathcal M}$ be a Hilbert space
such that $\dim \widetilde{\mathcal M}=\text{the
cardinality of $J$}$, $\{\tilde e_j\}_{j\in J}$ be an
orthonormal basis of $\widetilde{\mathcal M}$ and
$\widetilde E$ be a $\B(\widetilde{\mathcal
M})$-valued Borel spectral measure on $[1,\infty)$
given by
   \begin{align*}
\widetilde E(\varDelta) f = \sum_{j\in J}
\chi_{\varDelta}(\lambda_j) \langle f, \tilde e_j
\rangle \tilde e_j, \quad f \in \widetilde {\mathcal
M}, \, \varDelta \in \borel{[1,\infty)}.
   \end{align*}
Since by \cite[Proposition~ 6.2]{Ja-St}, $\sup_{j\in
J} \lambda_j = \sup_{j\in J} \|S_j\| < \infty$, the
spectral measure $\widetilde E$ is compactly supported
in $[1,\infty)$. Define the sequence $\{\widetilde
W_n\}_{n=0}^{{\infty}} \subseteq
\B(\widetilde{\mathcal M})$ by
   \begin{align*}
\widetilde W_n = \int_{[1,\infty)} \xi_n(x) \widetilde
E(d x), \quad n \Ge 0.
   \end{align*}
Note that the sequence $\{\widetilde
W_n\}_{n=0}^{{\infty}}$ is uniformly bounded (see
Footnote \ref{Foot2}). Clearly, $\widetilde W_0 \tilde
e_j = \lambda_j \tilde e_j$ for all $j \in J$, which
means that $\widetilde W_0$ is a diagonal operator.
Denote by $\widetilde W$ the operator valued
unilateral weighted shift on
$\ell^2_{\widetilde{\mathcal M}}$ with weights
$\{\widetilde W_n\}_{n=0}^{{\infty}}$. It follows from
the proof of the implication (ii)$\Rightarrow$(i) that
$\widetilde W \cong \bigoplus_{j\in J} S_j$. Hence $W
\cong \widetilde W$. By Theorem~ \ref{fincyc}, $W_0$
is a diagonal operator.
   \end{proof}
   \begin{remark}
Regarding Theorem~ \ref{fincyc2}, it is worth noting
that if $\dim \ker W^* \Le \aleph_0$ and $W_0$ is
diagonal, then $W$ can be modelled by a weighted
composition operator on an $L^2$-space over a
$\sigma$-finite measure space (use \cite[Section
2.3(g)]{BJJS2} and an appropriately adapted version of
\cite[Corollary~ C.2]{BJJS1}). \hfill $\diamondsuit$
   \end{remark}
Recall that for a given operator $T \in B (\hh)$, the
smallest cardinal number $\mathfrak n$ for which there
exists a closed vector subspace $\mathcal N$ of $\hh$
such that $\dim {\mathcal N} = \mathfrak n$ and $\hh =
\bigvee_{n=0}^{\infty} T^n(\mathcal N)$ is called the
{\em order of multicyclicity} of $T$. If the order of
multicyclicity of $T$ is finite, then $T$ is called
{\em finitely multicyclic}. As shown in Lemma~
\ref{mulcyc1} below, the order of multicyclicity of a
completely non-unitary $2$-isometry can be calculated
explicitly (in fact, the proof of Lemma~ \ref{mulcyc1}
contains more information). Part (i) of Lemma~
\ref{mulcyc1} appeared in \cite[Proposition~
1(i)]{Her} with a slightly different definition of the
order of multicyclicity and a different proof. Part
(ii) of Lemma~ \ref{mulcyc1} is covered by
\cite[Lemma~ 2.19(b)]{Ch-0} in the case of finite
multicyclicity. In fact, the proof of part (ii) of
Lemma~ \ref{mulcyc1}, which is given below, works for
analytic operators having Wold-type decomposition in
the sense of Shimorin (see \cite{Sh}).
   \begin{lemma} \label{mulcyc1}
Let $T\in \B(\hh)$ be an operator. Then
   \begin{enumerate}
   \item[(i)] the order of
multicyclicity of $T$ is greater than or equal to
$\dim \ker T^*,$
   \item[(ii)] if $T$ is a completely non-unitary
$2$-isometry, then the order of multicyclicity of $T$
is equal to $\dim \ker T^*.$
   \end{enumerate}
   \end{lemma}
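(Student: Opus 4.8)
The plan is to prove the two parts by different means: part (i) by an elementary projection argument valid for arbitrary $T$, and part (ii) by combining (i) with Shimorin's Wold-type decomposition together with a direct analysis of the residual subspace.

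For (i), I would fix an arbitrary closed subspace $\mathcal N \subseteq \hh$ with $\hh = \bigvee_{n=0}^{\infty} T^n(\mathcal N)$ and show $\dim \ker T^* \Le \dim \mathcal N$. Since $T^n(\mathcal N) \subseteq \overline{\ran T}$ for every $n \Ge 1$, the generating condition collapses to $\hh = \mathcal N \vee \overline{\ran T}$. Let $P$ be the orthogonal projection of $\hh$ onto $\ker T^* = \hh \ominus \overline{\ran T}$. Then $P$ annihilates $\overline{\ran T}$, so by continuity and linearity $\ker T^* = P(\hh) = P\big(\overline{\mathcal N + \overline{\ran T}}\big) \subseteq \overline{P(\mathcal N)}$; since $P(\mathcal N) \subseteq \ker T^*$ sits inside a closed subspace, this forces $\overline{P(\mathcal N)} = \ker T^*$. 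Thus $P|_{\mathcal N}\colon \mathcal N \to \ker T^*$ has dense range, and as a bounded operator with dense range cannot increase the orthogonal dimension (compare density characters, noting that a finite-rank range is closed), we obtain $\dim \ker T^* \Le \dim \mathcal N$. Taking the infimum over all admissible $\mathcal N$ yields (i).

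For (ii), in view of (i) it suffices to produce one generating subspace of dimension $\dim \ker T^*$, and the natural candidate is $\mathcal N = \ker T^*$; concretely, I would prove $\hh = \bigvee_{n=0}^{\infty} T^n(\ker T^*)$. A $2$-isometry is $2$-hyperexpansive, hence left-invertible by \eqref{2hypcon}, so Shimorin's Wold-type decomposition from \cite{Sh} applies and yields $\hh = [\ker T^*] \dotplus \hh_\infty$, where $[\ker T^*] := \bigvee_{n=0}^{\infty} T^n(\ker T^*)$ and $\hh_\infty := \bigcap_{n=1}^{\infty} T^n(\hh)$. The argument then reduces to showing that $\hh_\infty$ reduces $T$ and that $T|_{\hh_\infty}$ is unitary: once this is in hand, complete non-unitarity forces $\hh_\infty = \{0\}$, so $\hh = [\ker T^*]$, and $\mathcal N = \ker T^*$ witnesses that the order of multicyclicity is at most $\dim \ker T^*$; together with (i) this gives equality.

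The hard part will be establishing the unitarity of $T|_{\hh_\infty}$. Writing $\Delta = T^*T - I \Ge 0$, the identity $I - 2T^*T + T^{*2}T^2 = 0$ rearranges to $T^*\Delta T = \Delta$, whence $T^{*n}\Delta T^n = \Delta$ and $\|T^n x\|^2 = \|x\|^2 + n\langle \Delta x, x\rangle$ for all $n \in \zbb_+$. I would first check $T(\hh_\infty) = \hh_\infty$ using injectivity of $T$: each $x \in \hh_\infty$ equals $T^n z$ for some $z$, and left-invertibility pins the preimage down inside $\hh_\infty$. For $x \in \hh_\infty$, writing $x = T^n y_n$ with $y_n \in \hh_\infty$, the affine formula combined with expansiveness ($\|y_n\| \Le \|T^n y_n\| = \|x\|$) gives $n\langle \Delta x, x\rangle \Le \|x\|^2$ for every $n$, forcing $\langle \Delta x, x\rangle = 0$, hence $\Delta x = 0$ and $\|Tx\| = \|x\|$; so $T|_{\hh_\infty}$ is a surjective isometry. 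Finally, for $x \in \hh_\infty$ write $x = Ty$ with $y \in \hh_\infty$; since $\langle \Delta y, y\rangle = 0$ and $\Delta \Ge 0$ give $\Delta y = 0$, we get $T^* x = T^*T y = (I + \Delta) y = y \in \hh_\infty$, so $\hh_\infty$ also reduces $T$. This last computation is the technically delicate step; the remainder is bookkeeping.
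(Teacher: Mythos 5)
Your proposal is correct and follows essentially the same route as the paper's proof: part (i) is the same projection argument (density of $P(\mathcal N)$ in $\ker T^*$, then the fact that a bounded operator with dense range cannot increase Hilbert-space dimension, \cite[Problem 56]{Hal}), and part (ii) rests, exactly as in the paper, on Shimorin's Wold-type decomposition \cite[Theorem 3.6]{Sh} combined with complete non-unitarity. The only difference is that you re-derive, via $T^*\Delta T=\Delta$, the fact that $\hh_\infty=\bigcap_{n=1}^{\infty}T^n(\hh)$ reduces $T$ to a unitary operator; this is already contained in the statement of Shimorin's theorem, so the paper simply quotes it and concludes $\hh=\bigvee_{n=0}^{\infty}T^n(\ker T^*)$ in a single line.
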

   \begin{proof}
(i) Let $\mathcal N$ be a closed vector subspace of
$\hh$ such that $\hh = \bigvee_{n=0}^{\infty}
T^n(\mathcal N)$ and $P\in \B(\hh)$ be the orthogonal
projection of $\hh$ onto $\ker T^*.$ Clearly, $\ker
T^* \perp T^n(\hh)$ for all $n \in\nbb.$ If $f \in
\ker T^* \ominus \overline{P(\mathcal N)}$, then
   \begin{align*}
\langle f, T^0 h \rangle = \langle f, P h \rangle = 0,
\quad h \in {\mathcal N},
   \end{align*}
which together with the previous statement yields $f
\in \big(\bigvee_{n=0}^{\infty} T^n(\mathcal N)
\big)^{\perp} = \{0\}.$ Hence $\overline{P(\mathcal
N)} = \ker T^*.$ As a consequence, the operator
$P|_{\mathcal N}\colon \mathcal N\to \ker T^*$ has
dense range, which implies that $\dim \ker T^* \Le
\dim \mathcal N$ (see \cite[Problem 56]{Hal}). This
gives~ (i).

(ii) Since, by \cite[Theorem~ 3.6]{Sh}, $\hh =
\bigvee_{n=0}^{\infty} T^n(\ker T^*),$ we see that the
order of multicyclicity of $T$ is less than or equal
to $\dim \ker T^*.$ This combined with (i) completes
the proof.
   \end{proof}
The following result generalizes the remarkable fact
that a finitely multicyclic completely non-unitary
isometry is unitarily equivalent to an orthogonal sum
of finitely many unilateral unweighted shifts (cf.
\cite[Proposition~ 2.4]{Kub}).
   \begin{corollary} \label{mulcyc}
A finitely multicyclic completely non-unitary
$2$-isometry $T$ satisfying the kernel condition is
unitarily equivalent to an orthogonal sum of
$\mathfrak n$ unilateral weighted shifts, where
$\mathfrak n$ equals the order of multicyclicity of
$T$. Moreover, for each cardinal number $\mathfrak n
\Ge \aleph_0$ there exists a completely non-unitary
$2$-isometry satisfying the kernel condition, whose
order of multicyclicity equals $\mathfrak n$ and which
is not unitarily equivalent to any orthogonal sum of
unilateral weighted shifts.
   \end{corollary}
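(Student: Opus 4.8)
The plan is to split the statement into its two assertions and handle them separately using the machinery already developed. For the first assertion, suppose $T$ is a finitely multicyclic completely non-unitary $2$-isometry satisfying the kernel condition, and let $\mathfrak n$ be its order of multicyclicity. By Lemma \ref{mulcyc1}(ii), $\mathfrak n = \dim \ker T^*$, which is finite by hypothesis. Theorem \ref{Zak1} (the equivalence of (ii) and (iv)) lets me replace $T$ by its model: $T \cong W$, where $W$ is an operator valued unilateral weighted shift on $\ell^2_{\mathcal M}$ with weights $W_n = \int_{[1,\infty)} \xi_n(x)\,E(dx)$ for a compactly supported $\B(\mathcal M)$-valued spectral measure $E$. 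Under this unitary equivalence $\dim \ker T^* = \dim \ker W^* = \dim \mathcal M$ by \eqref{aopws}, so $\mathcal M$ is finite-dimensional.

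First I would observe that since $\mathcal M$ is finite-dimensional, the selfadjoint operator $W_0 = \int_{[1,\infty)} x\, E(dx)$ is automatically diagonalizable with respect to some orthonormal basis of $\mathcal M$; that is, $W_0$ is a diagonal operator. Invoking Theorem \ref{fincyc2} (the implication (ii)$\Rightarrow$(i)), I conclude that $W \cong \bigoplus_{j\in J} S_j$ for unilateral weighted shifts $S_j$, and the final clause of that theorem gives $\card J = \dim \ker W^* = \mathfrak n$. Hence $T$ is unitarily equivalent to an orthogonal sum of exactly $\mathfrak n$ unilateral weighted shifts, which proves the first assertion.

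For the second assertion, fix a cardinal $\mathfrak n \Ge \aleph_0$ and I would construct a completely non-unitary $2$-isometry satisfying the kernel condition whose order of multicyclicity is $\mathfrak n$ but which is \emph{not} an orthogonal sum of unilateral weighted shifts. The strategy is to build the model operator $W$ directly via Theorem \ref{Zak1}: choose a Hilbert space $\mathcal M$ with $\dim \mathcal M = \mathfrak n$ and a compactly supported $\B(\mathcal M)$-valued spectral measure $E$ on $[1,\infty)$ for which $W_0 = \int x\, E(dx)$ is \emph{not} a diagonal operator. The natural choice is to let $W_0$ be a selfadjoint operator with nontrivial continuous spectral part — for instance, the operator of multiplication by the independent variable on an $L^2$ space of a continuous measure on a compact subinterval of $[1,\infty)$, orthogonally summed with enough copies to reach dimension $\mathfrak n$ if needed. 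Then $W$ is a pure (hence completely non-unitary) $2$-isometry satisfying the kernel condition by Theorem \ref{Zak1}, and its order of multicyclicity equals $\dim \ker W^* = \dim \mathcal M = \mathfrak n$ by Lemma \ref{mulcyc1}(ii). Finally, Theorem \ref{fincyc2} (the implication (i)$\Rightarrow$(ii)) shows that if $W$ were an orthogonal sum of unilateral weighted shifts then $W_0$ would have to be diagonal, contradicting the choice of $E$; hence no such decomposition exists.

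The routine part is assembling the citations; the one genuinely delicate point is the second assertion, where I must guarantee that the chosen $W_0$ is simultaneously selfadjoint with $W_0 \Ge I$, non-diagonal, and has a spectral measure that is compactly supported — so that \eqref{wagi} produces a bona fide bounded model operator — while arranging $\dim \mathcal M = \mathfrak n$. The main obstacle will be exhibiting a non-diagonal $W_0$ explicitly, and here the cleanest device is a continuous (non-atomic) spectral measure: a selfadjoint operator is diagonal precisely when its spectral measure is purely atomic, so any $E$ with a nonzero continuous part yields a non-diagonal $W_0$ and thereby defeats any orthogonal-sum-of-shifts decomposition.
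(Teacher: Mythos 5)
Your proposal is correct and follows essentially the same route as the paper: the authors' (one-line) proof is precisely to combine Theorem~\ref{fincyc2}, Lemma~\ref{mulcyc1}, and the fact that positive operators on finite-dimensional spaces are diagonal while on infinite-dimensional spaces they need not be, which is exactly the skeleton you fleshed out (including the standard non-diagonal example of multiplication by $x$ on an $L^2$-space of a continuous measure). Your write-up merely makes explicit the details the paper leaves to the reader, so there is nothing to correct.
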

   \begin{proof}
Apply Theorem~ \ref{fincyc2}, Lemma~ \ref{mulcyc1} and
the fact that positive operators in finite-dimensional
Hilbert spaces are diagonal while in
infinite-dimensional not necessarily.
   \end{proof}
   \section{\label{Sec9} Unitary equivalence of $2$-isometric
weighted shifts on directed trees satisfying the
kernel condition} This section provides a model for a
$2$-isometric weighted shift $\slam$ on a rooted
directed tree $\tcal$ which satisfy the condition
\eqref{hypo+} (see Theorem~ \ref{2isscs-t}). Although
the kernel condition is weaker than the condition
\eqref{hypo+}, both are equivalent if $\tcal$ is
leafless and the weights of $\slam$ are nonzero. The
aforesaid model enables us to classify (up to unitary
equivalence) $2$-isometric weighted shifts on rooted
directed trees satisfying the condition \eqref{hypo+}
in terms of $k$th generation branching degree (see
Theorem~ \ref{equival}).

We begin with necessary information on weighted shifts
on directed trees. The reader is referred to
\cite{JJS} for more details on this subject (see also
\cite{B-D-P,K-L-P,M-A} for very recent developments).
Let $\tcal = (V,E)$ be a directed tree (if not stated
otherwise, $V$ and $E$ stand for the sets of vertices
and edges of $\tcal$, respectively). If $\tcal$ has a
root, we denote it by $\rot$. We set $V^{\circ}=V$ if
$\tcal$ is rootless and $V^{\circ}=V\setminus
\{\rot\}$ otherwise. We say that $\tcal$ is {\em
leafless} if $V=V^{\prime}$, where $V^{\prime} := \{u
\in V\colon \child{u} \neq \emptyset\}$. Given
$W\subseteq V$ and $n\in \zbb_+,$ we set
$\childn{n}{W}=W$ if $n=0$ and
$\childn{n}{W}=\child{\childn{n-1}{W}}$ if $n\Ge 1$,
where $\child{W} = \bigcup_{u\in W} \{v\in V \colon
(u,v) \in E\}.$ We put $\des{W}=\bigcup_{n=0}^{\infty}
\childn{n}{W}.$ Given $v\in V$, we write
$\child{v}=\child{\{v\}}$ and
$\childn{n}{v}=\childn{n}{\{v\}}$. For $v\in
V^{\circ}$, a unique $u\in V$ such that $(u,v)\in E$
is said to be the {\em parent} of $v$; we denote it by
$\parent{v}$. The cardinality of $\child{v}$ is called
the {\em degree} of a vertex $v\in V$ and denoted by
$\deg{v}$. Recall that if $\tcal$ is rooted, then by
\cite[Corollary~ 2.1.5]{JJS}, we have
   \begin{align} \label{ind1}
V = \bigsqcup_{n=0}^\infty \childn{n} {\rot} \quad
\text{(the disjoint sum)}.
   \end{align}

   Following \cite[page 67]{JJS}, we define the
   directed tree $\tcal_{\eta,\kappa} =
   (V_{\eta,\kappa}, E_{\eta,\kappa})$ by
   \allowdisplaybreaks
   \begin{align}  \label{tetak}
   \left.
   \begin{aligned} V_{\eta,\kappa} & = \big\{-k\colon k\in
J_\kappa\big\} \cup \{0\} \cup \big\{(i,j)\colon i\in
J_\eta,\, j \in J_{\infty}\big\},
   \\
E_{\eta,\kappa} & = E_\kappa \cup
\big\{(0,(i,1))\colon i \in J_\eta\big\} \cup
\big\{((i,j),(i,j+1))\colon i\in J_\eta,\, j\in
J_{\infty}\big\},
   \\
E_\kappa & = \big\{(-k,-k+1) \colon k\in
J_\kappa\big\},
   \end{aligned}
   \;\right\}
   \end{align}
where $\eta \in \{2,3,4,\ldots\} \cup \{\infty\}$,
$\kappa \in \zbb_+ \cup \{\infty\}$ and $J_\iota = \{k
\in \zbb_+\colon 1 \Le k\Le \iota\}$ for $\iota \in
\zbb_+ \sqcup \{\infty\}$. The directed tree
$\tcal_{\eta,\kappa}$ is leafless, it has only one
branching vertex $0$ and $\deg{0}=\eta$. Moreover, it
is rooted if $\kappa < \infty$ and rootless if
$\kappa=\infty$.

Let $\tcal=(V,E)$ be a directed tree. In what follows
$\ell^2(V)$ stands for the Hilbert space of square
summable complex functions on $V$ equipped with the
standard inner product. If $W$ is a nonempty subset of
$V,$ then we regard the Hilbert space $\ell^2(W)$ as a
closed vector subspace of $\ell^2(V)$ by identifying
each $f\in \ell^2(W)$ with the function $\widetilde f
\in \ell^2(V)$ which extends $f$ and vanishes on the
set $V \setminus W$. Note that the set $\{e_u\}_{u\in
V}$, where $e_u\in \ell^2(V)$ is the characteristic
function of $\{u\}$, is an orthonormal basis of
$\ell^2(V)$. Given a system $\lambdab =
\{\lambda_v\}_{v\in V^{\circ}}$ of complex numbers, we
define the operator $\slam$ in $\ell^2(V)$, called a
{\em weighted shift on} $\tcal$ with weights
$\lambdab$ (or simply a weighted shift on $\tcal$), as
follows
   \begin{align*}
   \begin{aligned}
\mathscr D(\slam) & = \{f \in \ell^2(V) \colon
\varLambda_\tcal f \in \ell^2(V)\},
   \\
\slam f & = \varLambda_\tcal f, \quad f \in \mathscr
D(\slam),
   \end{aligned}
   \end{align*}
where $\mathscr D(\slam)$ stands for the {\em domain}
of $\slam$ and $\varLambda_\tcal$ is the mapping
defined on complex functions $f$ on $V$ by
   \begin{align*}
(\varLambda_\tcal f) (v) =
   \begin{cases}
\lambda_v \cdot f\big(\parent v\big) & \text{if } v\in
V^\circ,
   \\
   0 & \text{if } v \text{ is a root of } \tcal.
   \end{cases}
   \end{align*}

Now we collect some properties of weighted shifts on
directed trees that are needed in this paper (see
\cite[Propositions~ 3.1.3, 3.1.8, 3.4.3 and
3.5.1]{JJS}). From now on, we adopt the convention
that $\sum_{v\in \emptyset} x_v = 0$.
   \begin{lemma} \label{basicws}
Let $\slam$ be a weighted shift on $\tcal$ with
weights $\lambdab=\{\lambda_v\}_{v\in V^{\circ}}$.
Then
   \begin{enumerate}
   \item[(i)] $e_u$ is in $\mathcal D(\slam)$ if and
only if $\sum_{v \in \child{u}}|\lambda_v|^2 <
\infty;$ if $e_u \in \mathscr D(\slam)$, then $\slam
e_u = \sum_{v \in \child{u}}\lambda_v e_v$ and
$\|\slam e_u\|^2 = \sum_{v \in
\child{u}}|\lambda_v|^2,$
   \item[(ii)]
$\slam \in \B(\ell^2(V))$ if and only if $\sup_{u\in
V} \sum_{v \in \child{u}}|\lambda_v|^2 < \infty;$ if
this is the case, then $\|\slam\|^2=\sup_{u\in V}
\|\slam e_u\|^2 = \sup_{u\in V} \sum_{v \in
\child{u}}|\lambda_v|^2.$
   \end{enumerate}
Moreover, if $\slam \in \B(\ell^2(V))$, then
   \begin{enumerate}
   \item[(iii)] $\ker{\slam^*} =
   \begin{cases}
\langle e_{\rot} \rangle \oplus \bigoplus_{u \in
V^\prime} \big(\ell^2(\child{u}) \ominus \langle
\lambdab^u \rangle\big) & \text{if $\tcal$ is rooted,}
   \\[.5ex]
\bigoplus_{u \in V^\prime} \big(\ell^2(\child{u})
\ominus \langle \lambdab^u \rangle\big) &
\text{otherwise,}
   \end{cases}$
\\[1ex]
where $\lambdab^u \in \ell^2(\child{u})$ is given by
$\lambdab^u\colon \child{u} \ni v \to \lambda_v \in
\C$,
   \item[(iv)]
$|\slam| e_u = \|\slam e_u\|e_u$ for all $u\in V,$
   \end{enumerate}
   \end{lemma}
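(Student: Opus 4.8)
The plan is to derive all four assertions from one computation, namely the action of $\slam$ on the orthonormal basis $\{e_u\}_{u\in V}$, combined with the single structural fact that in a directed tree every vertex in $V^\circ$ has a \emph{unique} parent. This last fact makes the sets $\{\child{u}\}_{u\in V}$ pairwise disjoint and yields the partition $V^\circ = \bigsqcup_{u\in V^{\prime}} \child{u}$, which is the source of all the orthogonality used below. For (i), I would simply evaluate $\varLambda_\tcal e_u$ at an arbitrary $w$: since $(\varLambda_\tcal e_u)(w) = \lambda_w\, e_u(\parent{w})$ for $w\in V^\circ$, this vanishes unless $\parent{w}=u$, so that
   \begin{align*}
\varLambda_\tcal e_u = \sum_{v\in \child{u}} \lambda_v e_v .
   \end{align*}
This function lies in $\ell^2(V)$ precisely when $\sum_{v\in\child{u}}|\lambda_v|^2<\infty$, which is the membership criterion, and in that case the stated formulas for $\slam e_u$ and $\|\slam e_u\|^2$ are immediate.

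For (ii), the key observation is that distinct columns have disjoint supports: if $u\neq u^{\prime}$ then $\child{u}\cap\child{u^{\prime}}=\emptyset$, hence $\slam e_u \perp \slam e_{u^{\prime}}$. By the Pythagorean theorem, for every finitely supported $f=\sum_u f(u) e_u$ one gets $\|\slam f\|^2 = \sum_u |f(u)|^2 \|\slam e_u\|^2 \Le \big(\sup_u \|\slam e_u\|^2\big)\|f\|^2$. If $\sup_u\|\slam e_u\|^2<\infty$, this bounds $\slam$ on a dense subspace and a routine closure argument extends it to all of $\ell^2(V)$ with $\|\slam\|^2 \Le \sup_u\|\slam e_u\|^2$; conversely $\|\slam e_u\|\Le\|\slam\|$ for each $u$ forces the reverse inequality and the boundedness criterion. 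Substituting $\|\slam e_u\|^2=\sum_{v\in\child{u}}|\lambda_v|^2$ from (i) gives the displayed supremum over $\child{u}$.

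For (iii), I would first compute the adjoint on the basis. From $\langle \slam^* e_v, e_u\rangle = \langle e_v, \slam e_u\rangle$ one reads off $\slam^* e_v = \overline{\lambda_v}\, e_{\parent{v}}$ for $v\in V^\circ$ and $\slam^* e_{\rot}=0$, whence $(\slam^* f)(u) = \sum_{v\in\child{u}} \overline{\lambda_v} f(v) = \langle f|_{\child{u}}, \lambdab^u\rangle$ in $\ell^2(\child{u})$. Using the orthogonal decomposition $\ell^2(V) = \langle e_{\rot}\rangle \oplus \bigoplus_{u\in V^{\prime}} \ell^2(\child{u})$ induced by the partition $V=\{\rot\}\sqcup\bigsqcup_{u\in V^{\prime}}\child{u}$ (and the analogous decomposition without the $\langle e_{\rot}\rangle$ summand in the rootless case), the condition $\slam^* f=0$ says exactly that each $\ell^2(\child{u})$-component of $f$ is orthogonal to $\lambdab^u$, with the $\langle e_{\rot}\rangle$-coordinate left free; this is the stated formula. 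Finally, for (iv) I would compute $\slam^*\slam e_u = \slam^*\sum_{v\in\child{u}}\lambda_v e_v = \big(\sum_{v\in\child{u}}|\lambda_v|^2\big) e_u = \|\slam e_u\|^2 e_u$, so $\slam^*\slam$ is diagonal with eigenvalues $\|\slam e_u\|^2$; taking the positive square root gives $|\slam| e_u = \|\slam e_u\| e_u$. The only points requiring genuine care, rather than mere bookkeeping, are the density/closedness passage in (ii) and verifying the adjoint formula and the partition-induced decomposition in (iii); everything else is forced by the disjoint-support structure.
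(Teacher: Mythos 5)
The paper itself contains no proof of this lemma: it is quoted from the memoir \cite[Propositions~3.1.3, 3.1.8, 3.4.3 and 3.5.1]{JJS}, so there is no internal argument to compare against; your proof is correct and is essentially the standard one from that reference, with everything resting, as you say, on the unique-parent property and the resulting partition $V^{\circ}=\bigsqcup_{u\in V^{\prime}}\child{u}$. The one step you leave as a ``routine closure argument'' in (ii) is the only place where a naive density argument is insufficient on its own: $\slam$ is defined as a maximal-domain operator, so boundedness on finitely supported vectors gives a bounded extension but does not by itself show $\mathscr D(\slam)=\ell^2(V)$. This is closed most cleanly by replacing the density argument with a direct Tonelli computation valid for every $f\in\ell^2(V)$,
\begin{align*}
\|\varLambda_\tcal f\|^2=\sum_{v\in V^{\circ}}|\lambda_v|^2|f(\parent{v})|^2
=\sum_{u\in V}|f(u)|^2\sum_{v\in\child{u}}|\lambda_v|^2
\le \Big(\sup_{u\in V}\sum_{v\in\child{u}}|\lambda_v|^2\Big)\|f\|^2,
\end{align*}
which shows at once that every $f$ lies in the domain and gives the norm bound (alternatively, apply Fatou's lemma to truncations of $f$). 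With that line inserted, all four parts are complete; the computations in (i), (iii) and (iv), including the identification of $|\slam|$ via uniqueness of the positive square root of the diagonal operator $\slam^*\slam$, are exactly as in the cited source.
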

According to \cite[Lemma~ 5.3(viii)]{A-C-J-S}, bounded
weighted shifts on rooted directed trees are
completely non-unitary. As shown in Example~
\ref{obustr} below, this is no longer true for bounded
weighted shifts on rootless directed trees even though
they are isometric and non-unitary (note that, by
\cite[Proposition~ 3.5]{Ja-St}, $2$-isometric
bilateral weighted shifts are always unitary).
   \begin{example} \label{obustr}
Let us consider any isometric weighted shift $\slam$
on the directed tree $\tcal_{\eta,\infty}$ (see
\eqref{tetak}) with weights
$\lambdab=\{\lambda_v\}_{v\in V_{\eta,\infty}}$, where
$\eta \in \{2,3,4, \ldots\} \cup \{\infty\}$ is fixed.
This means that $\sum_{i=1}^{\eta}
|\lambda_{i,1}|^2=1$ and $|\lambda_{i,j}| =
|\lambda_{-k}|=1$ for all $i\in J_{\eta}$, $j\in
J_{\infty} \setminus \{1\}$ and $k\in \zbb_+$. We will
show that $\slam$ is non-unitary and it is not
completely non-unitary. For this, by Wold's
decomposition theorem (see \cite[Theorem~ 1.1]{SF}),
it suffices to prove that $\ker \slam^*\neq \{0\}$ and
$\bigoplus_{n=0}^{\infty} \slam^n(\ker \slam^*) \neq
\ell^2(V_{\eta,\infty})$. In view of Lemma~
\ref{basicws}(iii), we have
   \begin{align} \label{orthd}
\ker \slam^* = \bigoplus_{v \in V_{\eta,\infty}}
\Big(\ell^2(\child{v}) \ominus \langle \lambdab^{v}
\rangle\Big).
   \end{align}
Since $\eta \Ge 2$ and $\lambdab^{v}\neq 0$ for all
$v\in V_{\eta,\infty}$, we deduce that the only
nonzero term in the orthogonal decomposition
\eqref{orthd} is $\ell^2(\child{0}) \ominus \langle
\lambdab^{0} \rangle$. Hence $\ker \slam^*\neq \{0\}$
and
   \begin{align*}
\bigoplus_{n=0}^{\infty} \slam^n(\ker \slam^*)
\subseteq \chi_{\varOmega} \cdot
\ell^2(V_{\eta,\infty}) \neq \ell^2(V_{\eta,\infty}),
   \end{align*}
where $\varOmega = \bigcup_{n=1}^{\infty}
\childn{n}{0}$. This proves our claim. \hfill
$\diamondsuit$
   \end{example}
   \begin{remark}
By \cite[Remark~ 5.8 and Proposition~ 5.11]{A-C-J-S},
a $2$-isometric weighted shift on a rootless directed
tree with nonzero weights which satisfies the kernel
condition is isometric. Further, if $\slam$ is an
isometric weighted shift on a rootless directed tree,
then by Wold's decomposition theorem, it is (up to
unitary equivalence) an orthogonal sum $W \oplus
S^{\oplus{\mathfrak n}}$, where $W$ is a unitary
operator, $S$ is the isometric unilateral shift of
multiplicity $1$ and ${\mathfrak n}=\dim \ker
\slam^*$. In particular, the isometry $\slam$ in
Example~ \ref{obustr} is equal to $U \oplus S^{\oplus
(\eta - 1)}$, where $U$ is the unitary bilateral shift
of multiplicity~ $1$.
   \hfill $\diamondsuit$
   \end{remark}
Recall that a weighted shift $\slam \in \B(\ell^2(V))$
on a leafless directed tree $\tcal$ with nonzero
weights $\lambdab=\{\lambda_v\}_{v\in V^{\circ}}$
satisfies the kernel condition if and only if there
exists a family $\{\alpha_v\}_{v\in V} \subseteq
\rbb_+$ such that
   \begin{align}  \label{hypo+}
\|\slam e_u\|=\alpha_{\parent{u}}, \quad u \in
V^{\circ}.
   \end{align}
In general, the condition \eqref{hypo+} is stronger
than the kernel condition (see \cite[Lemma~
5.6]{A-C-J-S}). In view of \cite[Remark~ 5.8 and
Proposition~ 5.10]{A-C-J-S}, if $\slam\in
\B(\ell^2(V))$ is a $2$-isometric weighted shift on a
rooted directed tree $\tcal$ with nonzero weights
$\lambdab=\{\lambda_v\}_{v\in V^{\circ}}$ which
satisfies the kernel condition, then $\tcal$ is
leafless, $\|\slam e_v\| = \mathrm{const}$ on
$\childn{n}{\rot}$ for every $n\in \zbb_+$, and the
corresponding sequence of constants forms a sequence
of positive weights of a $2$-isometric unilateral
weighted shift (cf.\ \cite[Lemma~ 6.1(ii)]{Ja-St}).
This suggests the following method of constructing
such $\slam$'s.
   \begin{procedure} \label{uwrem}
Let $\tcal$ be a rooted and leafless directed tree.
Take a sequence $\{\beta_n\}_{n=0}^{\infty}$ of
positive weights of a $2$-isometric unilateral
weighted shift. By \cite[Lemma~ 6.1(ii)]{Ja-St}, there
exists $x\in [1,\infty)$ such that $\beta_n=\xi_n(x)$
for all $n\in\zbb_+$ (the converse statement is true
as well). Then, using \eqref{ind1} and the following
equation (cf.\ \cite[Eq.\ (2.2.6)]{BJJS})
   \begin{align*}
\childn{n+1}{\rot} = \bigsqcup_{u\in \childn{n}{\rot}}
\child{u}, \quad n\in \zbb_+,
   \end{align*}
we can define inductively for every $n\in\zbb_+$ the
system $\{\lambda_v\}_{v\in \childn{n+1}{\rot}}$ of
complex numbers (not necessarily nonzero) such that
$\sum_{w\in \child{u}} |\lambda_w|^2 = \beta_{n}^2$
for all $u\in \childn{n}{\rot}$. Let $\slam$ be the
weighted shift on $\tcal$ with the so-constructed
weights $\lambdab=\{\lambda_v\}_{v \in V^{\circ}}.$
Clearly, in view of Lemma~ \ref{basicws}(i), we have
   \begin{align*}
x=\beta_0=\|\slam e_{\rot}\|.
   \end{align*}
Since the sequence $\{\xi_n(t)\}_{n=0}^{\infty}$ is
monotonically decreasing for every $t\in [1,\infty)$
(see Lemma~ \ref{xin11}), we infer from \eqref{ind1}
and Lemma~ \ref{basicws}(ii) that $\slam \in
\B(\ell^2(V))$ and $\beta_0=\|\slam\|.$ By
\cite[Proposition~ 5.10]{A-C-J-S}, $\slam$ is a
$2$-isometric weighted shift on $\tcal$ which
satisfies \eqref{hypo+} for some $\{\alpha_v\}_{v\in
V} \subseteq \rbb_+.$ Hence, according to \cite[Lemma~
5.6]{A-C-J-S}, $\slam$ satisfies the kernel condition.
\hfill $\diamondsuit$
   \end{procedure}
We will show in Theorem~ \ref{2isscs-t} below that a
$2$-isometric weighted shift on a rooted directed tree
which satisfies \eqref{hypo+} is unitarily equivalent
to an orthogonal sum of $2$-isometric unilateral
weighted shifts with positive weights; the orthogonal
sum always contains a ``basic'' $2$-isometric
unilateral weighted shift with weights
$\{\xi_n(x)\}_{n=0}^{\infty}$ for some $x\in
[1,\infty)$ and a number of inflations of
$2$-isometric unilateral weighted shifts with weights
$\{\xi_n(x)\}_{n=k}^{\infty}$, where $k$ varies over a
(possibly empty) subset of $\nbb$ (cf.\ Remark~
\ref{3uw}).

For $x \in [1,\infty)$, we denote by $S_{[x]}$ the
unilateral weighted shift in $\ell^2$ with weights
$\{\xi_n(x)\}_{n=0}^{\infty}$, where
$\{\xi_n\}_{n=0}^{\infty}$ is as in \eqref{xin}. Given
a leafless directed tree $\tcal$ and $k\in \nbb$, we
define the {\em $k$th generation branching degree}
${\mathfrak j}^{\tcal}_k$ of $\tcal$ by
   \begin{align} \label{defjn}
{\mathfrak j}^{\tcal}_k = \sum_{u \in
\childn{k-1}{\rot}} (\deg{u}-1), \quad k\in \nbb.
   \end{align}
Let us note that the proof of Theorem~
\ref{2isscs-t}(i) relies on the technique involved in
the proof of the implication (iii)$\Rightarrow$(v) of
\cite[Theorem~ 2.5]{A-C-J-S}.
   \begin{theorem} \label{2isscs-t}
  \mbox{\phantom{a}}
   \begin{enumerate}
   \item[(i)] Let $\slam \in \B(\ell^2(V))$ be a $2$-isometric
weighted shift on a rooted directed tree $\tcal$
satisfying \eqref{hypo+} for some $\{\alpha_v\}_{v\in
V} \subseteq \rbb_+.$ Then $\tcal$ is leafless and
$\slam$ is unitarily equivalent to the orthogonal sum
   \begin{align}  \label{zenob}
S_{[x]} \oplus \bigoplus_{k = 1}^{\infty}
\big(S_{[\xi_k(x)]}\big)^{\oplus {j}_k},
   \end{align}
where $x =\|\slam e_{\rot}\|$ and ${j}_k = {\mathfrak
j}^{\tcal}_k$ for all $k\in \nbb.$ Moreover, if the
weights of $\slam$ are nonzero, then ${j}_k \Le
\aleph_0$ for all $k\in \nbb$.
   \item[(ii)]
For any $x\in [1,\infty)$ and any sequence of cardinal
numbers $\{j_k\}_{k=1}^{\infty},$ the orthogonal sum
\eqref{zenob} is unitarily equivalent to a
$2$-isometric weighted shift $\slam \in \B(\ell^2(V))$
on a rooted directed tree $\tcal$ satisfying
\eqref{hypo+} for some $\{\alpha_v\}_{v\in V}
\subseteq \rbb_+$ such that $x=\|\slam e_{\rot}\|.$
Moreover, if $j_k \Le \aleph_0$ for all $k\in \nbb$,
then the weights of $\slam$ can be chosen to be~
positive.
   \end{enumerate}
   \end{theorem}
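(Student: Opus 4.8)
For part (i) the plan is to begin by pinning down the moduli of $\slam$. Since $\slam$ is a weighted shift on a tree, both $\slam^*\slam$ and $\slam^{*2}\slam^2$ are diagonal in $\{e_u\}_{u\in V}$, with $\slam^*\slam e_u=\|\slam e_u\|^2e_u$ and $\slam^{*2}\slam^2e_u=\|\slam^2e_u\|^2e_u$, so the $2$-isometry identity collapses to the scalar recursion $\|\slam^2e_u\|^2=2\|\slam e_u\|^2-1$ for every $u\in V$. Feeding \eqref{hypo+} into this, I would use that $\|\slam e_v\|=\alpha_u$ for all $v\in\child u$, whence $\|\slam^2e_u\|^2=\alpha_u^2\|\slam e_u\|^2$, and show by induction on the generation that $\|\slam e_u\|$ is constant on each $\childn{n}{\rot}$; the resulting sequence $a_n$ satisfies $a_0=\|\slam e_\rot\|=x$ and $a_{n+1}=\sqrt{\frac{2a_n^2-1}{a_n^2}}$, so $a_n=\xi_n(x)$ by Lemma~\ref{xin11}(v). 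Because $\xi_n(x)\Ge 1>0$, no vertex can be a leaf, so $\tcal$ is leafless.

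The second step is to organize the generating vectors. Being a completely non-unitary (\cite[Lemma~ 5.3(viii)]{A-C-J-S}) $2$-isometry satisfying the kernel condition (\cite[Lemma~ 5.6]{A-C-J-S}), $\slam$ is analytic by Theorem~\ref{Zak1}, so $\ell^2(V)=\bigvee_{n\Ge0}\slam^n(\ker\slam^*)$ by \cite[Theorem~ 3.6]{Sh}. Using Lemma~\ref{basicws}(iii) I would split $\ker\slam^*$ along generations: its generation-$0$ part is $\langle e_\rot\rangle$, and for $k\Ge1$ its generation-$k$ part is $\bigoplus_{u\in\childn{k-1}{\rot}}\big(\ell^2(\child u)\ominus\langle\lambdab^u\rangle\big)$, of dimension $\sum_{u\in\childn{k-1}{\rot}}(\deg u-1)={\mathfrak j}^{\tcal}_k$. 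I would fix an orthonormal basis $\{g_\iota\}$ of $\ker\slam^*$ subordinate to this grading, so that each $g_\iota$ lies in a single generation $k_\iota$, with exactly one index in generation $0$ and exactly ${\mathfrak j}^{\tcal}_k$ indices in generation $k$.

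The heart of the argument, and the step I expect to be the main obstacle, is to prove that $\{\slam^m g_\iota\}_{\iota,m\Ge0}$ is an orthogonal system and that each $\mathcal H_\iota:=\bigvee_{m\Ge0}\langle\slam^m g_\iota\rangle$ reduces $\slam$. The crucial fact from the first step is that $\slam^*\slam$ acts as the scalar $\xi_j(x)^2$ on $\ell^2(\childn{j}{\rot})$, so that $\langle\slam a,\slam b\rangle=\xi_j(x)^2\langle a,b\rangle$ whenever $a,b$ lie in generation $j$. For orthogonality, vectors landing in different generations are automatically orthogonal, while for two landing in a common generation I would peel off one shift from each, picking up positive factors, until an exponent reaches $0$ and $g_\iota\in\ker\slam^*=(\ran\slam)^\perp$ annihilates the inner product (unless $\iota=\iota'$ and $m=m'$). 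The same scalar identity gives $\slam^*\slam^m g_\iota=\xi_{k_\iota+m-1}(x)^2\,\slam^{m-1}g_\iota$, so each $\mathcal H_\iota$ is $\slam^*$-invariant, hence reducing, and $\|\slam^{m+1}g_\iota\|/\|\slam^m g_\iota\|=\xi_{k_\iota+m}(x)$ identifies $\slam|_{\mathcal H_\iota}$ with $S_{[\xi_{k_\iota}(x)]}$. Completeness $\bigoplus_\iota\mathcal H_\iota=\bigvee_{\iota,m}\slam^m g_\iota=\ell^2(V)$ follows from $\ell^2(V)=\bigvee_n\slam^n(\ker\slam^*)$, and grouping the summands by generation yields \eqref{zenob} with $j_k={\mathfrak j}^{\tcal}_k$. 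Finally, when the weights are nonzero, $\sum_{v\in\child u}|\lambda_v|^2=\xi_{|u|}(x)^2<\infty$ forces each $\child u$ to be countable, so every $\childn{k-1}{\rot}$ is countable and ${\mathfrak j}^{\tcal}_k\Le\aleph_0$.

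For part (ii) the plan is to realize any prescribed data $(x,\{j_k\})$ by a concrete tree. I would build a rooted leafless tree $\tcal$ with a distinguished spine $\rot=s_0\to s_1\to\cdots$ in which $s_{k-1}$ has $1+j_k$ children, one being $s_k$ and the remaining $j_k$ each starting a bare infinite ray of degree-$1$ vertices, every ray vertex having exactly one child. Since ray vertices contribute $0$ to the branching degree, ${\mathfrak j}^{\tcal}_k=\deg s_{k-1}-1=j_k$. Running Procedure~\ref{uwrem} with $\beta_n=\xi_n(x)$ then produces a $2$-isometric weighted shift $\slam$ on $\tcal$ satisfying \eqref{hypo+} with $\|\slam e_\rot\|=x$, and part (i) identifies it with \eqref{zenob}. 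When every $j_k\Le\aleph_0$, each vertex has at most countably many children, so in Procedure~\ref{uwrem} the finite total $\sum_{w\in\child u}|\lambda_w|^2=\xi_{|u|}(x)^2$ can be split into strictly positive summands, giving positive weights.
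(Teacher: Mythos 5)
Your proposal is correct, and for part (i) it takes a genuinely different, more self-contained route than the paper. The paper's proof funnels $\slam$ through the operator-valued weighted shift machinery: Shimorin's Wold-type decomposition turns $\slam$ into an operator valued unilateral weighted shift with invertible weights, which is normalized via \cite[Propositions~2.2 and the proof of Theorem~3.3]{Ja-3} so that the weights become $\int\xi_n(x)\,E(dx)$; then \eqref{hypo+} and the kernel condition are used to show that the zeroth weight equals $|\slam|$ restricted to $\ker\slam^*$, hence is diagonal with eigenvalue $x$ on $\langle e_{\rot}\rangle$ and $\xi_k(x)$ on the generation-$k$ piece of $\ker\slam^*$; the scalar-shift decomposition \eqref{zenob} is finally read off by rerunning the proof of the implication (ii)$\Rightarrow$(i) of Theorem~\ref{fincyc2}. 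You bypass all of this: your key observation is that for tree shifts the $2$-isometry identity diagonalizes, so combined with \eqref{hypo+} it forces $\|\slam e_u\|=\xi_n(x)$ on $\childn{n}{\rot}$ (your recursion plus Lemma~\ref{xin11}(v)), i.e.\ $\slam^*\slam$ acts as the scalar $\xi_n(x)^2$ on each generation; from there you build the orthogonal system $\{\slam^m g_\iota\}$ by hand from a generation-graded orthonormal basis of $\ker\slam^*$, get reduction from $\slam^*\slam^m g_\iota=\xi_{k_\iota+m-1}(x)^2\,\slam^{m-1}g_\iota$, and identify each block with $S_{[\xi_{k_\iota}(x)]}$. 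Your only external inputs are Shimorin's wandering subspace theorem (via analyticity) and Lemma~\ref{basicws}, versus the paper's reliance on \cite{Ja-3}, Theorem~\ref{fincyc2} and spectral measures; what the paper's longer route buys is integration with the model framework of Section~\ref{Sec5} (identifying $E$ as the spectral measure of the zeroth model weight is what ties into the unitary-invariant statements there), while yours buys an elementary argument with an explicitly exhibited unitary. Two points you should make explicit: $x=\|\slam e_{\rot}\|\Ge 1$ (from $\|\slam f\|\Ge\|f\|$ for $2$-isometries, \cite[Lemma~1]{R-0}) is needed before Lemma~\ref{xin11}(v) applies, and the weight identification rests on $\xi_{k+m}(x)=\xi_m(\xi_k(x))$, which is Lemma~\ref{xin11}(ii). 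Part (ii) of your proposal (spine-plus-rays tree, Procedure~\ref{uwrem}, then part (i)) is essentially the paper's own argument.
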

   \begin{proof}
(i) First, observe that by \cite[Lemma~ 5.7]{A-C-J-S},
$\tcal$ is leafless. To prove the unitary equivalence
part, we show that $\slam$ is unitarily equivalent to
an operator valued unilateral weighted shift
$\widetilde W$ on $\ell^2_{\ker{\slam^*}}$ with
weights $\{\widetilde W_n\}_{n=0}^{\infty}$ satisfying
the assumptions of Theorem~ \ref{fincyc2} and that
$\widetilde W_0$ is a diagonal operator.

It follows from \eqref{hypo+} and \cite[Lemma~
5.6]{A-C-J-S} that $T:=\slam$ satisfies the kernel
condition. By Lemma~ \ref{basicws}(iii), $\ker T^*\neq
\{0\}$ and so $T$ is a non-unitary $2$-isometry.
Hence, by \cite[Theorem~ 2.5]{A-C-J-S}, the spaces
$\{T^n (\ker T^*)\}_{n=0}^{\infty}$ are mutually
orthogonal. Since, by \cite[Lemma~5.3(viii)]{A-C-J-S},
$T$ is analytic, we infer from \cite[Theorem~ 3.6]{Sh}
that
   \begin{align} \label{ella}
\ell^2(V) = \bigoplus_{n=0}^{\infty} \mathcal M_n,
   \end{align}
where $\mathcal M_n:=T^n(\ker T^*)$ for $n\in \zbb_+$.
Given that $T$ is non-unitary and left-invertible, we
see that $\mathcal M_n$ is a nonzero closed vector
subspace of $\ell^2(V)$ and $\varLambda_n:=
T|_{\mathcal M_n}\colon \mathcal M_n \to \mathcal
M_{n+1}$ is a linear homeomorphism for every $n\in
\zbb_+$. Therefore, by \cite[Problem 56]{Hal}, the
Hilbert spaces $\mathcal M_n$ and $\mathcal M_0$ are
unitarily equivalent for every $n\in \zbb_+$. Set
$V_0=I_{\mathcal M_0}$. Let $\varLambda_0 = U_0
|\varLambda_0|$ be the polar decomposition of
$\varLambda_0$. Then $U_0\colon \mathcal M_0 \to
\mathcal M_1$ is a unitary isomorphism. Put
$V_1=U_0^{-1}\colon \mathcal M_1 \to \mathcal M_0$.
For $n\Ge 2$, let $V_n\colon \mathcal M_n \to \mathcal
M_0$ be any unitary isomorphism. By \eqref{ella}, we
can define the unitary isomorphism $V\colon \ell^2(V)
\to \ell^2_{\mathcal M_0}$ by
   \begin{align*}
V(h_0 \oplus h_1 \oplus \ldots) = (V_0h_0, V_1h_1,
\ldots), \quad h_0 \oplus h_1 \oplus \ldots \in
\ell^2(V).
   \end{align*}
Let $W\in \B(\ell^2_{\mathcal M_0})$ be the operator
valued unilateral weighted shift with (uniformly
bounded) invertible weights $\{V_{n+1} \varLambda_n
V_n^{-1}\}_{n=0}^{\infty} \subseteq \B(\mathcal M_0)$.
It is a routine matter to verify that $VT = WV.$
Therefore, $T=\slam$ is unitarily equivalent to $W$.
Since the zeroth weight of $W$, say $W_0$, equals $V_1
\varLambda_0 V_0^{-1},$ we get $W_0=|\varLambda_0|$. A
careful look at the proof of \cite[Proposition~
2.2]{Ja-3} reveals that $W$ is unitarily equivalent to
a $2$-isometric operator valued unilateral weighted
shift $\widetilde W$ on $\ell^2_{\mathcal M_0}$ with
invertible weights $\{\widetilde W_n\}_{n=0}^{\infty}
\subseteq \B(\mathcal M_0)$ such that $\widetilde W_0
= |W_0|$ and $\widetilde W_n \, \cdots\, \widetilde
W_0 \Ge 0$ for all $n \in \zbb_+$. Thus
   \begin{align} \label{w0l0}
\widetilde W_0=|\varLambda_0|.
   \end{align}
By \cite[Lemma~1]{R-0}, $\|\widetilde Wh\| \Ge \|h\|$
for all $h\in \ell^2_{\mathcal M_0}$, which yields
   \begin{align*}
\|\widetilde W_0 h_0\| = \|(0, \widetilde W_0 h_0, 0,
\ldots)\| = \|\widetilde W(h_0, 0, \ldots)\| \Ge
\|h_0\|, \quad h_0 \in \mathcal M_0.
   \end{align*}
Hence, by \eqref{w0l0}, $\widetilde W_0 \Ge I$. This
combined with the proof of \cite[Theorem~ 3.3]{Ja-3}
and Lemma~ \eqref{xin11}(v) implies that

   \begin{align*}
\widetilde W_n = \int_{[1,\|\widetilde W_0\|]}
\xi_n(x) E(d x), \quad n\in \zbb_+,
   \end{align*}
where $E$ is the spectral measure of $\widetilde W_0.$

Our next goal is to show that
   \begin{align} \label{w0more}
\text{${\mathcal M_0}$ reduces $|\slam|$ and
$\widetilde W_0 = |\slam||_{\mathcal M_0}$.}
   \end{align}
For this, observe that $\slam$ extends the operator
$\varLambda_0\colon \mathcal M_0 \to \mathcal M_1$ and
consequently
   \begin{align} \label{herb}
\langle \varLambda_0^*\varLambda_0 f,g\rangle =
\langle \slam^*\slam f,g\rangle, \quad f, g \in
\mathcal M_0.
   \end{align}
Knowing that $\slam$ satisfies the kernel condition,
we infer from \eqref{herb} that
$\varLambda_0^*\varLambda_0 = \slam^*\slam|_{\mathcal
M_0}$. This means that the orthogonal projection of
$\ell^2(V)$ onto $\mathcal M_0$ commutes with
$\slam^*\slam$. By the square root theorem, it
commutes with $|\slam|$ as well, which together with
\eqref{w0l0} implies \eqref{w0more}.

It follows from \eqref{ind1} and Lemma~
\ref{basicws}(iii) that
   \begin{align} \label{w0more-3}
\mathcal M_0 = \ker \slam^* = \langle e_{\rot} \rangle
\oplus \bigoplus_{k=1}^{\infty} \mathcal G_k,
   \end{align}
where $\mathcal G_{k}=\bigoplus_{u\in
\childn{k-1}{\rot}} \big(\ell^2(\child{u}) \ominus
\langle \lambdab^{u} \rangle\big)$ for $k\in \nbb$. In
view of Lemma~ \ref{basicws}(iv) and \eqref{hypo+}, we
see that $|\slam| e_{\rot} = \|\slam e_{\rot}\|
e_{\rot}$ and
   \begin{align*}
|\slam| f = \sum_{v\in \child{u}} f(v) |\slam| e_v =
\alpha_u f, \quad f \in \ell^2(\child{u}), \, u \in V.
   \end{align*}
This combined with \eqref{w0more} and \cite[Lemma~
5.9(iii)]{A-C-J-S} implies that
   \begin{align}  \label{afew}
   \left.
   \begin{aligned}
&\text{$\widetilde W_0$ is a diagonal operator,}
   \\
& \text{$\langle e_{\rot} \rangle$ reduces $\widetilde
W_0$ and $\widetilde W_0|_{\langle e_{\rot} \rangle} =
x I_{\langle e_{\rot} \rangle}$ with $x:=\|\slam
e_{\rot}\|$,}
   \\
& \text{$\mathcal G_k$ reduces $\widetilde W_0$ and
$\widetilde W_0|_{\mathcal G_k}=\xi_k(x) I_{\mathcal
G_k}$ for every $k\in \nbb$.}
   \end{aligned}
   \; \right\}
   \end{align}
Since $2$-isometries are injective and, by Lemma~
\ref{basicws}(i), $\|\slam e_u\|^2 = \sum_{v \in
\child{u}}|\lambda_v|^2$, we see that $\lambdab^{u}
\neq 0$ for every $u\in V$. As a consequence, we have
   \begin{align} \label{afew2}
\dim \mathcal G_k = \sum_{u \in \childn{k-1}{\rot}}
(\deg{u}-1)={\mathfrak j}^{\tcal}_k, \quad k\in\nbb.
   \end{align}
Now, following the proof of the implication
(ii)$\Rightarrow$(i) of Theorem~ \ref{fincyc2} and
applying \eqref{w0more-3}, \eqref{afew} and
\eqref{afew2} we see that $\slam$ is unitarily
equivalent to the orthogonal sum \eqref{zenob}. The
``moreover'' part is a direct consequence of
\cite[Proposition~ 3.1.10]{JJS}.

(ii) Let $\{j_k\}_{k=1}^{\infty}$ be a sequence of
cardinal numbers and $x\in [1,\infty).$ First, we
construct a directed tree $\tcal$. Without loss of
generality, we may assume that the set $\{n \in
\nbb\colon j_n \Ge 1\}$ is nonempty. Let $1 \Le n_1 <
n_2 < \ldots$ be a (finite or infinite) sequence of
positive integers such that
   \begin{align*}
\{n \in \nbb\colon j_n \Ge 1\} = \{n_1, n_2, \ldots\}.
   \end{align*}
Then using induction one can construct a leafless
directed tree $\tcal=(V,E)$ with root $\rot$ such that
each set $\childn{n_k-1}{\rot}$ has exactly one vertex
of degree $1+j_{n_k}$ and these particular vertices
are the only vertices in $V$ of degree greater than
one; clearly, the other vertices of $V$ are of degree
one (see Figure \ref{Fig1c-general}). Note that if $k
\Ge 3$, then a directed tree with these properties is
not unique (up to graph-isomorphism). Using Procedure~
\ref{uwrem}, we can find a system $\lambdab =
\{\lambda_v\}_{v\in V^{\circ}} \subseteq \rbb_+$ such
that $\slam \in \B(\ell^2(V))$, $\slam$ is a
$2$-isometry which satisfies \eqref{hypo+} for some
$\{\alpha_v\}_{v\in V} \subseteq \rbb_+$ and
$x=\|\slam e_{\rot}\|$. If additionally $j_n \Le
\aleph_0$ for all $n\in \nbb$, then the weights
$\{\lambda_v\}_{v\in V^{\circ}}$ can be chosen to be
positive (consult Procedure~ \ref{uwrem}). Since
   \begin{align*}
j_n=\sum_{u \in \childn{n-1}{\rot}} (\deg{u}-1), \quad
n\in \nbb,
   \end{align*}
we deduce from Theorem~ \ref{2isscs-t} that $T\cong
\slam$.
   \end{proof}
   \begin{figure}[t]
   \begin{center}
\subfigure {
\includegraphics[scale=0.58]{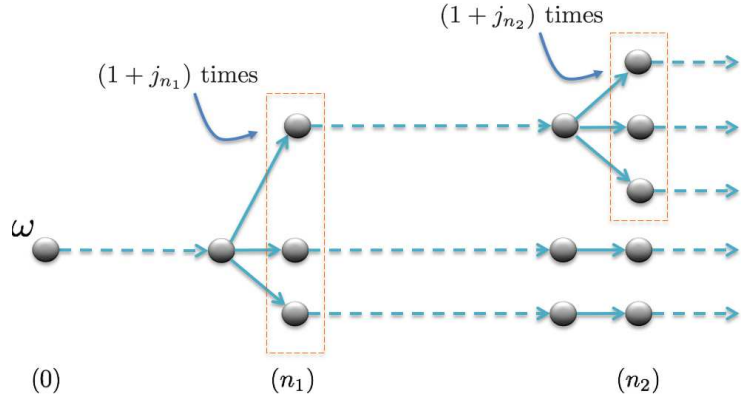}
} \caption{An example of a leafless directed tree
$\tcal$ with the properties required in the proof of
Theorem~ \ref{2isscs-t}(ii).} \label{Fig1c-general}
   \end{center}
   \end{figure}
Combining Theorem~ \ref{2isscs-t}(i), Theorem~
\ref{desz1} and Lemma~ \ref{xin11}(iv), we get the
following classification theorem.
   \begin{theorem} \label{equival}
For $k=1,2$, let $\tcal_k=(V_k,E_k)$ be a directed
tree with root $\rot_{k}$ and let $S_{\lambdab_k}\in
\B(\ell^2(V_k))$ be a $2$-isometric weighted shift on
$\tcal_k$ with weights $\lambdab_k =
\{\lambda_{k,v}\}_{v\in V_k^{\circ}}$ which satisfies
the condition \eqref{hypo+} for some
$\{\alpha_{k,v}\}_{v\in V_k} \subseteq \rbb_+$. Then
$S_{\lambdab_1} \cong S_{\lambdab_2}$ if and only if
one of the following conditions holds{\em :}
   \begin{enumerate}
   \item[(i)] $\|S_{\lambdab_1} e_{\rot_{1}}\| =
\|S_{\lambdab_2} e_{\rot_{2}}\| > 1$ and ${\mathfrak
j}^{\tcal_1}_n = {\mathfrak j}^{\tcal_2}_n$ for every
$n\in \nbb$,
   \item[(ii)] $\|S_{\lambdab_1} e_{\rot_{1}}\|
= \|S_{\lambdab_2} e_{\rot_{2}}\| =1$ and
$\sum_{n=1}^{\infty}{\mathfrak j}^{\tcal_1}_n =
\sum_{n=1}^{\infty} {\mathfrak j}^{\tcal_2}_n$.
   \end{enumerate}
   \end{theorem}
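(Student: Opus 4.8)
The plan is to reduce Theorem~\ref{equival} to the classification of orthogonal sums of injective unilateral weighted shifts established in Theorem~\ref{desz1}, using the explicit model \eqref{zenob} provided by Theorem~\ref{2isscs-t}(i). For $k=1,2$, the hypotheses on $S_{\lambdab_k}$ are exactly those of Theorem~\ref{2isscs-t}(i), so setting $x_k = \|S_{\lambdab_k} e_{\rot_k}\|$ we have
   \begin{align*}
S_{\lambdab_k} \cong S_{[x_k]} \oplus
\bigoplus_{n=1}^{\infty}
\big(S_{[\xi_n(x_k)]}\big)^{\oplus {\mathfrak
j}^{\tcal_k}_n}.
   \end{align*}
Thus $S_{\lambdab_1} \cong S_{\lambdab_2}$ if and only if these two orthogonal sums are unitarily equivalent. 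Each summand is an injective $2$-isometric unilateral weighted shift (hence injective by Lemma~\ref{basicws} or since $2$-isometries are injective), and the whole family is uniformly bounded because $\sup_n \xi_n(x_k) = x_k < \infty$ by Lemma~\ref{xin11}(iv). So Theorem~\ref{desz1} applies: the two sums are unitarily equivalent precisely when there is a bijection between their index sets matching the summands exactly as unilateral weighted shifts, i.e.\ matching shifts with identical positive weight sequences.

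First I would record the crucial bookkeeping fact that a shift $S_{[\xi_m(x_1)]}$ can be identified with a shift $S_{[\xi_n(x_2)]}$ if and only if their full weight sequences agree, that is $\xi_{m+i}(x_1) = \xi_{n+i}(x_2)$ for all $i \in \zbb_+$ (here I use the semigroup identity $\xi_{m+i} = \xi_m \circ \xi_i$ of Lemma~\ref{xin11}(ii), so that the $i$th weight of $S_{[\xi_m(x)]}$ is $\xi_{m+i}(x) = \xi_i(\xi_m(x))$, making $S_{[\xi_m(x)]} = S_{[\xi_m(x)]}$ in the obvious normalization). The decisive point is the strict monotonicity from Lemma~\ref{xin11}(iv): for fixed argument $t \in (1,\infty)$ the sequence $\{\xi_n(t)\}_{n}$ is strictly decreasing to $1$, whereas for $t=1$ it is constantly $1$ by Lemma~\ref{xin11}(iii). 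This lets me read off both the base point and the shift-index from a weight sequence: if $x_1 > 1$, then the first weight $\xi_m(x_1)$ together with monotonicity recovers $m$ and the value $x_1$ uniquely, so two such shifts coincide iff they carry the same base point and the same index.

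Next I would split into the two regimes of the statement. In case $x_1 = x_2 > 1$: matching the ``basic'' summand $S_{[x_k]}$ forces $x_1 = x_2$ (its zeroth weight is $x_k > 1$, and no $\xi_n(x)$ with $n \ge 1$ equals $x_k$ by strict monotonicity), and then for each fixed generation index $n$ the summands $S_{[\xi_n(x)]}$ at level $n$ are mutually distinct across levels by monotonicity, so the bijection of Theorem~\ref{desz1} must preserve the level $n$ and hence match multiplicities: ${\mathfrak j}^{\tcal_1}_n = {\mathfrak j}^{\tcal_2}_n$ for every $n$. This is condition (i). In case $x_1 = x_2 = 1$: by Lemma~\ref{xin11}(iii) all weights $\xi_n(1)=1$ collapse, so \emph{every} summand $S_{[\xi_n(1)]}$ is the unweighted unilateral (isometric) shift $S_{[1]}$, and the two models become $S_{[1]}^{\oplus(1+\sum_n {\mathfrak j}^{\tcal_1}_n)}$ and $S_{[1]}^{\oplus(1+\sum_n {\mathfrak j}^{\tcal_2}_n)}$; Theorem~\ref{desz1} then gives unitary equivalence iff the cardinalities agree, i.e.\ iff $\sum_n {\mathfrak j}^{\tcal_1}_n = \sum_n {\mathfrak j}^{\tcal_2}_n$, which is condition (ii). It remains only to note that unitary equivalence forces $x_1 = x_2$ in any case, because $\|S_{\lambdab_k}\| = x_k$ (the norm is a unitary invariant, and $\|S_{[x]}\|=x$ dominates the norms of all inflated summands $\|S_{[\xi_n(x)]}\| = \xi_n(x) \le x$), which pins down which of the two mutually exclusive alternatives (i), (ii) can occur.

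The main obstacle is the combinatorial matching in case~(i): I must rule out any ``diagonal'' bijection that pairs a summand from generation $m$ of $\tcal_1$ with a summand from a different generation $n$ of $\tcal_2$. This is precisely where strict monotonicity of $\{\xi_n(x)\}_n$ at $x > 1$ is essential---it guarantees the weight sequences $\{\xi_{m+i}(x)\}_i$ and $\{\xi_{n+i}(x)\}_i$ are equal only when $m=n$, so the bijection of Theorem~\ref{desz1} automatically respects generations and the multiplicities ${\mathfrak j}^{\tcal_k}_n$ become a complete invariant. The case $x=1$ is by contrast easy but degenerate, and the only subtlety there is handling possibly infinite cardinalities in the sum $\sum_n {\mathfrak j}^{\tcal_k}_n$, which Theorem~\ref{desz1} already accommodates since it is stated for index sets of arbitrary cardinality.
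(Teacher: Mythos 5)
Your proposal is correct and is essentially the paper's own argument: the paper proves Theorem~\ref{equival} precisely by combining Theorem~\ref{2isscs-t}(i), Theorem~\ref{desz1} and Lemma~\ref{xin11}(iv), which is exactly the reduction you carry out (your write-up merely makes explicit the matching of summands via zeroth weights and the degenerate collapse at $x=1$ that the paper leaves to the reader).
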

It is worth pointing out that, by \cite[Remark~ 5.8
and Lemma~ 5.9(iv)]{A-C-J-S} and Theorem~
\ref{equival}, the sequence $(\|\slam e_{\omega}\|,
{\mathfrak j}^{\tcal}_1, {\mathfrak j}^{\tcal}_2,
{\mathfrak j}^{\tcal}_3, \ldots)$ forms a complete
system of unitary invariants for non-isometric
$2$-isometric weighted shifts $\slam$ on rooted
directed trees $\tcal$ with nonzero weights satisfying
the kernel condition. In turn, the quantity
$\sum_{n=1}^{\infty} {\mathfrak j}^{\tcal}_n$ forms a
complete system of unitary invariants for isometric
weighted shifts $\slam$ on rooted directed trees
$\tcal$ (cf.\ \cite[Proposition~ 2.4]{Kub}).
   \begin{remark} \label{3uw}
Let us make a few observations concerning Theorem~
\ref{2isscs-t} (still under the assumptions of this
theorem). First, if $\slam$ is not an isometry, then
Lemma~ \ref{xin11}(iv) implies that the additive
exponent ${j}_k$ of the inflation
$\big(S_{[\xi_k(x)]}\big)^{\oplus {j}_k}$ that appears
in the orthogonal decomposition \eqref{zenob} is
maximal for every $k\in \nbb$. Second, by Lemma~
\ref{xin11}(ii), the weights of $S_{[\xi_k(x)]}$ take
the form $\{\xi_n(x)\}_{n=k}^{\infty}$. Hence, the
weights of components of the decomposition
\eqref{zenob} are built on the weights of a single
$2$-isometric unilateral weighted shift. Third, in
view of Corollary~ \ref{mulcyc} and Theorem~
\ref{2isscs-t}, general completely non-unitary
$2$-isometric operators satisfying the kernel
condition cannot be modelled by weighted shifts on
rooted direct trees. Finally, in view of Theorem~
\ref{equival}, there exist two unitarily equivalent
$2$-isometric weighted shifts on the same rooted
directed tree one with nonzero weights, the other with
some zero weights.
   \hfill $\diamondsuit$
   \end{remark}
Concluding this section, we show that there are
unitarily equivalent $2$-isometric weighted shifts on
non-graph isomorphic directed trees that satisfy
\eqref{hypo+}.
   \begin{figure}[t]
   \begin{center}
   \subfigure {
\includegraphics[scale=0.58]{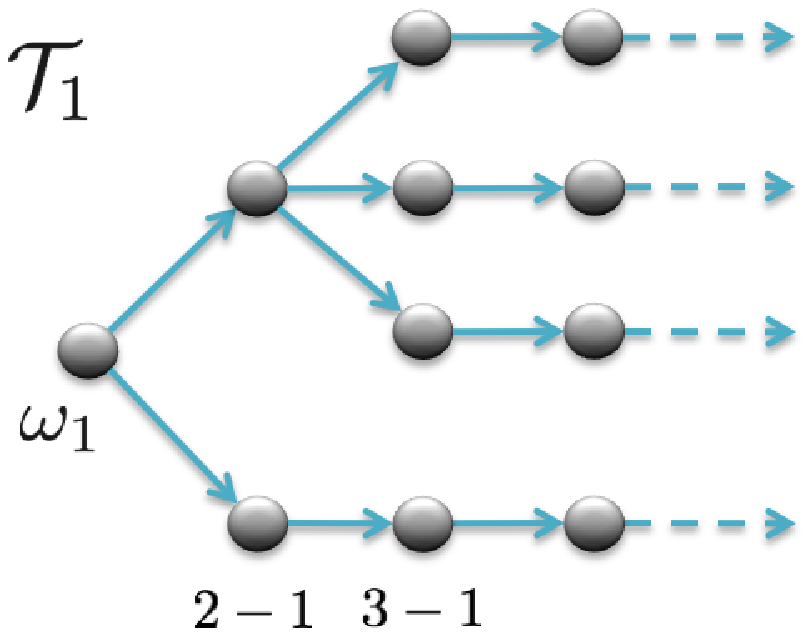}
}
   \subfigure {
\includegraphics[scale=0.58]{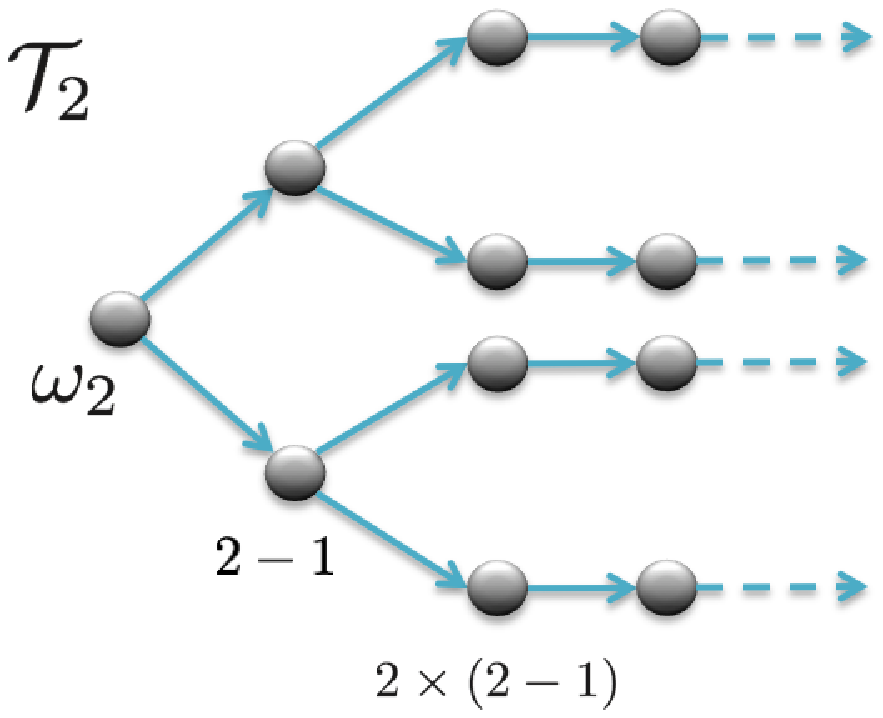}
} \caption{Two non-graph isomorphic directed trees
used in Example~ \ref{2+3}.} \label{fig1d}
   \end{center}
   \end{figure}
   \begin{example} \label{2+3}
For $k=1,2$, let $\tcal_k=(V_k,E_k)$ be a directed
tree with root $\rot_{k}$ as in Figure \ref{fig1d}.
Clearly, these two directed graphs are not graph
isomorphic. Moreover, we have (see \eqref{defjn} for
notation)
   \begin{align*}
{\mathfrak j}^{\tcal_{1}}_n = {\mathfrak
j}^{\tcal_{2}}_n =
   \begin{cases}
1 & \text{if } n=1,
   \\
2 & \text{if } n=2,
   \\
0 & \text{if } n\ge 3.
   \end{cases}
   \end{align*}
Fix $x\in [1,\infty)$. Using Procedure~ \ref{uwrem},
one can construct for $k=1,2$, a $2$-isometric
weighted shift $S_{\lambdab_k}\in \B(\ell^2(V_k))$ on
$\tcal_k$ with weights $\lambdab_k =
\{\lambda_{k,v}\}_{v\in V_k^{\circ}}$ which satisfies
the condition \eqref{hypo+} for some
$\{\alpha_{k,v}\}_{v\in V_k} \subseteq \rbb_+$ and the
equation $x=\|S_{\lambdab_k} e_{\rot_{k}}\|$. The
above combined with Theorem~ \ref{2isscs-t} implies
that
   \begin{align*}
S_{\lambdab_k} \cong S_{[x]} \oplus S_{[\xi_1(x)]}
\oplus \big(S_{[\xi_2(x)]}\big)^{\oplus 2}, \quad
k=1,2,
   \end{align*}
and so $S_{\lambdab_1} \cong S_{\lambdab_2}.$ In
particular, if $x=1$, then $S_{\lambdab_1}$ and
$S_{\lambdab_2}$ are unitarily equivalent isometries.
\hfill $\diamondsuit$
   \end{example}
   \section{\label{Sec6}The membership of the Cauchy dual operators in
$C_{0 \cdot}$ and $C_{\cdot 0}$} We begin by recalling
necessary concepts from \cite[Chapter~ II]{SF}. A
contraction $S \in \B(\hh)$ is of class $C_{0 \cdot}$
(resp., $C_{\cdot 0}$) if $S^n f \rar 0$ (resp.,
$S^{*n}f \rar 0$) as $n \rar \infty$ for all $f\in
\hh$. If $S$ is of class $C_{0 \cdot}$ and of class
$C_{\cdot 0}$, then we say that $S$ is of class
$C_{00}$. Observe that the norm of a contraction which
is not of class $C_{0 \cdot}$ (or not of class
$C_{\cdot 0}$) must equal $1$. Clearly, a contraction
$S$ is of class $C_{0 \cdot}$ if and only if $\mathsf
A_S=0,$ where $\mathsf A_S$ stands for the limit in
the strong (equivalently, weak) operator topology of
the sequence $\{S^{*n}S^{n}\}_{n=1}^{\infty}.$ That
such a limit exists plays a key role in the theory of
unitary and isometric asymptotes (see \cite[Chapter~
IX]{SF}; see also \cite[Theorem~ 1]{Dou}). As we know,
the Cauchy dual operator $T'$ of a $2$-isometry $T$ is
always a contraction (see \eqref{2hypcon}), so we can
look for an explicit description of $\mathsf A_{T'}.$
By examining the proof of \cite[Corollary~
4.6]{A-C-J-S}, we can calculate $\mathsf A_{T^\prime}$
for two classes of $2$-isometries, namely
quasi-Brownian isometries and $2$-isometries
satisfying the kernel condition. Recall that an
operator $T\in \B(\hh)$ is a {\em quasi-Brownian
isometry} if $T$ is a $2$-isometry such that
$\triangle_T T = \triangle_T^{1/2} T
\triangle_T^{1/2},$ where $\triangle_T =T^*T-I.$ A
quasi-Brownian isometry, called in \cite{Maj} a
$\triangle_T$-regular $2$-isometry, generalizes the
notion of a Brownian isometry introduced in
\cite{Ag-St}.
   \begin{lemma} \label{convcd}
Let $T\in \B(\hh)$ be a $2$-isometry and $G_T$ be the
spectral measure of $T^*T$. Then the following
assertions hold{\em :}
   \begin{enumerate}
   \item[(i)] if $T$ satisfies the kernel condition,
then $\mathsf A_{T^\prime}=G_T(\{1\}),$
   \item[(ii)] if $T$ is a quasi-Brownian isometry, then
$\mathsf A_{T^\prime}=\frac{1}{2}G_T(\{1\}) + (I +
T^*T)^{-1}.$
   \end{enumerate}
   \end{lemma}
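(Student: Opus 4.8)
The plan is to treat the two cases separately, in each reducing to a concrete model in which $T'^{*n}T'^n$ can be computed coordinate-wise and then passed to the limit. Throughout, write $\triangle_T=T^*T-I\Ge 0$; by \eqref{2hypcon} the operator $T'$ is a contraction, so $\{T'^{*n}T'^n\}_{n=1}^\infty$ is a decreasing sequence of positive contractions and hence converges in the strong operator topology to $\mathsf A_{T'}$, and our only task is to identify this limit. It is convenient to record the general identities $T'^*T'=(T^*T)^{-1}$ and, for every $2$-isometry, $T^{*n}T^n=I+n\triangle_T$ (the latter by the elementary induction based on $T^*\triangle_T T=\triangle_T$).

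For (i) I would first split $T=U\oplus V$ into its (unitary) normal part and its pure part; since $U$ is unitary, $U'=U$, $U'^{*n}U'^n=I$ and the spectral measure of $U^*U=I$ gives $G(\{1\})=I$ on that summand, so the assertion holds trivially there, while the kernel condition descends to $V$. Thus it suffices to treat a pure $2$-isometry satisfying the kernel condition, and by Theorem~\ref{Zak1} I may assume $T=W$ is the operator valued unilateral weighted shift on $\ell^2_{\mathcal M}$ with positive commuting weights $W_n=\int_{[1,\infty)}\xi_n\,dE$. A direct check shows that the Cauchy dual of a weighted shift with positive invertible weights $\{W_n\}$ is again a weighted shift, now with weights $\{W_n^{-1}\}=\{\int \xi_n^{-1}\,dE\}$. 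Hence $W'^{*n}W'^n$ is the diagonal operator whose $j$th entry is $\prod_{i=j}^{j+n-1}W_i^{-2}=\int_{[1,\infty)}\prod_{i=j}^{j+n-1}\xi_i(x)^{-2}\,E(dx)$, and the telescoping coming from $\xi_i(x)^2=\tfrac{1+(i+1)(x^2-1)}{1+i(x^2-1)}$ in \eqref{xin} collapses this to $\int_{[1,\infty)}\tfrac{1+j(x^2-1)}{1+(j+n)(x^2-1)}\,E(dx)$. As $n\to\infty$ the integrand tends boundedly to $\chi_{\{1\}}$, so each entry converges to $E(\{1\})$; since $E(\{1\})$ is exactly the spectral projection of $W^*W$ at $1$ on every coordinate, this yields $\mathsf A_{W'}=G(\{1\})$, proving (i). (Equivalently, the computation shows $T'^{*n}T'^n=(T^{*n}T^n)^{-1}=(I+n\triangle_T)^{-1}$, whose strong limit is the projection onto $\ker\triangle_T=\ran G_T(\{1\})$.)

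For (ii) I would invoke the triangular model of a quasi-Brownian isometry (cf.\ \cite{Ag-St} and the analysis around \cite[Corollary~4.6]{A-C-J-S}): up to unitary equivalence $T=\left(\begin{smallmatrix} V & E\\ 0 & U\end{smallmatrix}\right)$ on $\hh\oplus\kk$, where $V$ is an isometry, $U$ is unitary, $V^*E=0$, and $\Sigma:=E^*E$ commutes with $U$; one verifies conversely that any such $T$ is a quasi-Brownian isometry. Then $T^*T=I_{\hh}\oplus(I+\Sigma)$, so with $R:=(I+\Sigma)^{-1}$ one gets $T'=\left(\begin{smallmatrix} V & ER\\ 0 & UR\end{smallmatrix}\right)$, and an induction gives $T'^{n}=\left(\begin{smallmatrix} V^n & F_n\\ 0 & U^nR^n\end{smallmatrix}\right)$ with $F_n=\sum_{k=0}^{n-1}V^{\,n-1-k}E\,U^kR^{k+1}$. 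Using $V^*E=0$ (hence $V^{*m}E=0$ and $E^*V^m=0$ for $m\Ge1$) all off-diagonal cross terms cancel, and $V^{*n}V^n=I$, so $T'^{*n}T'^n=I_{\hh}\oplus\big(F_n^*F_n+R^{2n}\big)$ with $F_n^*F_n=\sum_{k=0}^{n-1}\Sigma R^{2k+2}$, the last equality using that $\Sigma$ commutes with $U$ and with $R$.

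Finally I would pass to the limit through the functional calculus of $\Sigma$: for a spectral value $s\Ge0$ the $\kk$-entry equals $(1+s)^{-2n}+s(1+s)^{-2}\sum_{k=0}^{n-1}(1+s)^{-2k}$, which converges (boundedly and monotonically) to $1$ if $s=0$ and to $\tfrac1{s+2}$ if $s>0$; that is, $F_n^*F_n+R^{2n}\to \tfrac12 E_\Sigma(\{0\})+(2I+\Sigma)^{-1}$ in the strong operator topology. Since $G_T(\{1\})=I_{\hh}\oplus E_\Sigma(\{0\})$ and $(I+T^*T)^{-1}=\tfrac12 I_{\hh}\oplus(2I+\Sigma)^{-1}$, combining the two summands gives precisely $\mathsf A_{T'}=\tfrac12 G_T(\{1\})+(I+T^*T)^{-1}$. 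The main obstacle in both parts is the model reduction together with the bookkeeping that renders the limit computable: in (i) it is the telescoping of $\prod\xi_i^2$, while in (ii) it is the cancellation of the off-diagonal contributions via $V^*E=0$ and the ensuing geometric series in $\Sigma$; the strong-operator convergence is then guaranteed by bounded convergence in the spectral calculus and by monotonicity, $T'$ being a contraction.
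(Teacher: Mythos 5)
Part (i) of your argument is correct, and it is a genuinely different route from the paper's: the paper obtains both assertions by quoting the closed formulas for $T'^{*n}T'^{n}$ established in \cite{A-C-J-S} (namely $T'^{*n}T'^{n}=(I+n\triangle_T)^{-1}$ under the kernel condition, and $T'^{*n}T'^{n}=(I+(T^*T)^{1-2n})(I+T^*T)^{-1}$ for quasi-Brownian isometries, from Theorems~3.3 and 4.5 there) and then letting $n\to\infty$ in the spectral calculus, whereas you rederive the first formula from the model of Theorem~\ref{Zak1} by telescoping the weights. Your reduction to the pure part, the identification of the Cauchy dual of an operator valued weighted shift with invertible positive weights, the telescoping, the identity $G_W(\{1\})=\bigoplus_j E(\{1\})$ (which is \eqref{wprim-4} of the paper), and the bounded-convergence passage to the limit are all sound.

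Part (ii), however, rests on a structure theorem that is false as stated: not every quasi-Brownian isometry is unitarily equivalent to $\bigl(\begin{smallmatrix}V&E\\0&U\end{smallmatrix}\bigr)$ with $U$ \emph{unitary}. Take $V$ an isometry on $\hh_1$ with infinite-dimensional cokernel, $E\colon\ell^2\to\hh_1$ an isometry with $\ran E\perp\ran V$, and $U=S$ the unilateral shift; then $T=\bigl(\begin{smallmatrix}V&E\\0&S\end{smallmatrix}\bigr)$ satisfies $T^*T=I\oplus 2I$ and $\triangle_T=0\oplus I$, and one checks directly that $T^*\triangle_TT=\triangle_T$ (this is just $S^*S=I$) and $\triangle_TT=\triangle_T^{1/2}T\triangle_T^{1/2}$, so $T$ is a quasi-Brownian isometry. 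Yet $T$ admits no representation of your form: in any such representation $\triangle_T=0\oplus\Sigma'$ with $\Sigma'=E'^*E'$ commuting with the unitary $U'$, so $\overline{\ran \Sigma'}=\overline{\ran\triangle_T}$ reduces $U'$, and the compression of $T$ to $\overline{\ran\triangle_T}$ --- an object independent of the representation --- would be unitary; for the $T$ above this compression equals $S$. (The Agler--Stankus model you cite concerns Brownian isometries, a strictly smaller class; for quasi-Brownian isometries the corner is in general only an isometry.)

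The gap is repairable precisely because your computation never uses surjectivity of $U$, only $U^{*k}U^{k}=I$ and $U\Sigma=\Sigma U$. The correct model can be proved directly: write $\hh=\ker\triangle_T\oplus\overline{\ran\triangle_T}$; since $T^*\triangle_TT=\triangle_T$ and $\triangle_T\Ge 0$, one has $T(\ker\triangle_T)\subseteq\ker\triangle_T$, so $T=\bigl(\begin{smallmatrix}V&E\\0&U\end{smallmatrix}\bigr)$ with $V$ isometric and $V^*E=0$ (block-diagonality of $T^*T=I+\triangle_T$); setting $D=\triangle_T|_{\overline{\ran\triangle_T}}$ (injective with dense range), the quasi-Brownian identity is equivalent to $UD=DU$, the $2$-isometry identity $U^*DU=D$ then forces $U^*U=I$, and comparing lower-right corners of $T^*T$ gives $\Sigma:=E^*E=D$. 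With $U$ merely isometric, your induction, the cancellations via $V^{*m}E=0$, the geometric series, and the limit $\tfrac12 E_\Sigma(\{0\})+(2I+\Sigma)^{-1}$ go through verbatim and yield (ii).
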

Before stating the main result of this section, we
record the following fact.
   \begin{lemma} \label{Wiktorek}
If $T\in \B(\hh)$ is left-invertible and $T'$ is of
class $C_{0\cdot}$ or of class $C_{\cdot 0},$ then $T$
is completely non-unitary.
   \end{lemma}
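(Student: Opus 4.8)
The plan is to argue by contraposition. Suppose $T$ is \emph{not} completely non-unitary, so that there is a nonzero reducing closed vector subspace $\mathcal L$ of $\hh$ for which $U := T|_{\mathcal L}$ is a unitary operator. I will show that under this assumption $T'$ can be of neither class $C_{0\cdot}$ nor class $C_{\cdot 0}$, which is exactly the desired contrapositive.

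First I would verify that $\mathcal L$ reduces the Cauchy dual $T' = T(T^*T)^{-1}$ and that in fact $T'|_{\mathcal L} = U$. Since $\mathcal L$ reduces $T$, it also reduces $T^*$ and hence the positive invertible operator $T^*T$; by the functional calculus (using that $T$ is left-invertible, so $T^*T$ is bounded below and invertible) $\mathcal L$ reduces $(T^*T)^{-1}$ as well, and therefore it reduces $T'$. Because $U = T|_{\mathcal L}$ is unitary, one has $(T^*T)|_{\mathcal L} = U^*U = I_{\mathcal L}$, whence $(T^*T)^{-1}|_{\mathcal L} = I_{\mathcal L}$ and consequently $T'|_{\mathcal L} = T|_{\mathcal L} = U$.

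The final step exploits the definitions of the two classes recalled at the start of this section. Fix any nonzero $f \in \mathcal L$. If $T'$ were of class $C_{0\cdot}$, then $T'^n f \to 0$; but $T'^n f = U^n f$ with $\|U^n f\| = \|f\| \neq 0$ for every $n$, a contradiction. If instead $T'$ were of class $C_{\cdot 0}$, then, since $\mathcal L$ reduces $T'$ it also reduces $T'^*$ and $T'^*|_{\mathcal L} = U^*$ is again unitary; hence $\|T'^{*n} f\| = \|U^{*n} f\| = \|f\| \neq 0$, contradicting $T'^{*n} f \to 0$. In either case we reach a contradiction, so $T$ must be completely non-unitary.

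I do not anticipate a genuine obstacle here; the argument is short. The only point that requires care is the identification $T'|_{\mathcal L} = U$, which rests on the observation that $T^*T$ restricts to the identity precisely on the subspace where $T$ acts unitarily, together with the routine fact that a reducing subspace for $T^*T$ is automatically reducing for $(T^*T)^{-1}$.
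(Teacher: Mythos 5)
Your proof is correct and takes essentially the same route as the paper's: the paper invokes the facts that the Cauchy dual of an orthogonal sum $A\oplus B$ is $A'\oplus B'$ and that the Cauchy dual of a unitary operator is unitary, which is precisely your computation that $\mathcal L$ reduces $T'$ with $T'|_{\mathcal L}=U$, after which the failure of both convergence conditions on $\mathcal L$ is immediate. You have simply written out the details the paper leaves implicit.
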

   \begin{proof}
First, note the following.
   \begin{align} \label{orthsum}
   \begin{minipage}{70ex}
{\em If $T$ is left-invertible and $T$ is an
orthogonal sum of operators $A$ and $B$, i.e.,
$T=A\oplus B$, then $A$ and $B$ are left-invertible
and $T^{\prime} = A^{\prime} \oplus B^{\prime}.$}
   \end{minipage}
   \end{align}
   This together with the fact that the Cauchy dual
operator of a unitary operator is unitary completes
the proof.
   \end{proof}
Now, we can prove the main result of this section.
   \begin{theorem} \label{coo}
Let $T\in \B(\hh)$ be a $2$-isometry. Then
   \begin{enumerate}
   \item[(i)] $T'$ is of class $C_{\cdot 0}$
if and only if $T$ is completely non-unitary.
   \end{enumerate}
Moreover, if $T$ satisfies the kernel condition, then
   \begin{enumerate}
   \item[(ii)] $T'$ is of class $C_{0\cdot}$ if and only if
$T$ is completely non-unitary and $E(\{1\})=0$, where
$E$ is as in Theorem~ {\em \ref{Zak1}(iv)},
   \item[(iii)]
$T'$ is of class $C_{0\cdot}$ if and only if $T'$ is
of class $C_{00}$, or equivalently if and only if
$G_T(\{1\})=0,$ where $G_T$ is the spectral measure of
$T^*T.$
   \end{enumerate}
   \end{theorem}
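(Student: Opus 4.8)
The three assertions interlock, so the plan is to establish (i) first and then read off (ii) and (iii). For the substantive equivalence in (iii), recall that a contraction $S$ is of class $C_{0\cdot}$ exactly when its asymptotic limit $\mathsf A_S$ vanishes; since $T$ satisfies the kernel condition, Lemma~\ref{convcd}(i) gives $\mathsf A_{T'}=G_T(\{1\})$, so $T'$ is of class $C_{0\cdot}$ if and only if $G_T(\{1\})=0$. To upgrade $C_{0\cdot}$ to $C_{00}$ I would invoke (i): if $T'$ is of class $C_{0\cdot}$ then $T$ is completely non-unitary by Lemma~\ref{Wiktorek}, whence $T'$ is also of class $C_{\cdot 0}$ by (i), so $T'$ is of class $C_{00}$; the reverse implication is trivial.

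For (ii), Lemma~\ref{Wiktorek} reduces matters to the completely non-unitary case, where Theorem~\ref{Zak1}(iv) models $T$ by $W$ with weights $W_n=\int_{[1,\infty)}\xi_n(x)\,E(dx)$. The point is to prove $G_T(\{1\})=0\iff E(\{1\})=0$. Since $W^*W=\bigoplus_{n}W_n^{*}W_n$ with $W_n^*W_n=\int_{[1,\infty)}\xi_n(x)^2\,E(dx)$, and since $\xi_n(1)=1$ while $\xi_n(x)>1$ for $x>1$ by Lemma~\ref{xin11}(iii),(iv), the integrand $\xi_n(\cdot)^2-1$ is nonnegative and vanishes only at $x=1$; hence $\ker(W_n^*W_n-I)=\ran E(\{1\})$ for every $n$, so the spectral projection $G_T(\{1\})$ of $T^*T=W^*W$ onto $\ker(T^*T-I)$ is the projection onto $\ell^2_{\ran E(\{1\})}$, which is $0$ exactly when $E(\{1\})=0$. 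Combined with the characterization of $C_{0\cdot}$ from the proof of (iii), this gives (ii).

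The substance is (i). The forward implication is Lemma~\ref{Wiktorek}. For the converse I would first isolate the unitary part of $T$ by setting $\hh_\infty=\bigcap_{n=0}^{\infty}\ran T^n$, each $\ran T^n$ being closed because the $2$-isometry $T$ is bounded below. One checks that $T$ maps $\hh_\infty$ bijectively onto itself; an invertible $2$-isometry is unitary, since substituting $T^{-m}f$ in the Agler identity shows $\|T^kf\|^2$ is affine in $k\in\zbb$ and nonnegativity forces $\|Tf\|=\|f\|$, so $T|_{\hh_\infty}$ is unitary and $T^*\hh_\infty\subseteq\hh_\infty$. Thus $\hh_\infty$ reduces $T$ to a unitary operator, and $T$ is completely non-unitary if and only if $\hh_\infty=\{0\}$, i.e.\ $T$ is analytic. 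Writing $T=U\oplus T_0$ with $U=T|_{\hh_\infty}$ unitary and $T_0$ analytic, \eqref{orthsum} gives $T'=U\oplus T_0'$, hence $\mathsf A_{(T')^*}=I_{\hh_\infty}\oplus\mathsf A_{(T_0')^*}$, where $\mathsf A_{S^*}:=\mathrm{SOT}\text{-}\lim_n S^nS^{*n}$ and $S$ is of class $C_{\cdot 0}$ iff $\mathsf A_{S^*}=0$. So (i) reduces to the single claim that an analytic $2$-isometry $T_0$ has $\mathsf A_{(T_0')^*}=0$.

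This last claim is the main obstacle, and I expect it to require the most care. A clean route is to show $\bigcap_n\overline{\ran T_0'^{n}}=\{0\}$: then $\bigcup_n\ker T_0'^{*n}$ is dense and, $T_0'^{*}$ being a contraction, $\|T_0'^{*n}f\|\to0$ for every $f$. Under the kernel condition this is immediate, since the kernel condition makes $T^*T$ leave $\ker T^*$, equivalently $\ran T$, invariant, and in the model of Theorem~\ref{Zak1}(iv) both $T_0$ and $T_0'$ are operator weighted shifts with invertible weights, so $\ran T_0'^{n}=\ran T_0^{n}$ and the intersection equals $\hh_\infty=\{0\}$; this already covers the applications (ii) and (iii). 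In general, however, the spaces $T^n(\ker T^*)$ need not be orthogonal and $\ran T_0'^{n}$ need not equal $\ran T_0^{n}$, so here I would instead exploit that $T_0'$ is, by \eqref{2hypcon}, a \emph{hyponormal} contraction and is injective: injectivity forces the isometric asymptote of $T_0'^{*}$ to be surjective, hence unitary, and hyponormality should upgrade this unitary asymptote to a genuine unitary direct summand of $T_0'$. Since $T_0$ being completely non-unitary is equivalent to $T_0'$ having no unitary reducing subspace (via $(T_0')'=T_0$), analyticity of $T_0$ would then force $\mathsf A_{(T_0')^*}=0$. The delicate point is exactly this passage from ``the asymptote of $T_0'^{*}$ is unitary'' to ``$T_0'$ has a unitary reducing summand,'' which is where hyponormality must enter in an essential way.
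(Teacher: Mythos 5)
Your handling of (ii) and (iii) is correct and essentially the paper's: both rest on Lemma~\ref{convcd}(i) (so that, under the kernel condition, $T'$ is of class $C_{0\cdot}$ iff $G_T(\{1\})=0$), on Lemma~\ref{Wiktorek}, and on the model of Theorem~\ref{Zak1}(iv) together with the fact that $\xi_n(x)=1$ exactly when $x=1$; your computation via $\ker(W_n^*W_n-I)=\ran E(\{1\})$ is the same calculation the paper performs with the pushforward formula for spectral measures. Moreover, your observation that (ii) and (iii) need the hard direction of (i) only for operators satisfying the kernel condition, together with your self-contained proof of that case ($W$ and $W'$ are operator weighted shifts whose weights are invertible with uniformly bounded inverses, so $\ran W'^n=\ran W^n$ is the set of sequences vanishing in the first $n$ coordinates, whence $\bigcap_n \ran W'^n=\{0\}$ and $\bigcup_n\ker W'^{*n}$ is dense), is valid and avoids the deepest ingredient of the paper's proof for that special case.

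The genuine gap is in (i) for a general $2$-isometry. After your (correct) Wold-type reduction to the completely non-unitary, analytic case, the argument rests on the claim that an injective hyponormal contraction with no unitary reducing summand is of class $C_{\cdot 0}$ — you phrase it as upgrading the unitary isometric asymptote of $T_0'^{*}$ to a unitary direct summand of $T_0'$ — and you explicitly leave this step unproven. That claim is precisely Putnam's theorem, \cite[Theorem~3]{Put} (a pure hyponormal contraction is of class $C_{\cdot 0}$), and it is the linchpin of the paper's proof of (i): the paper shows that $T'$ is a \emph{pure} hyponormal contraction — purity follows from \eqref{orthsum}, the identity $(T')'=T$, the normality of the Cauchy dual of a left-invertible normal operator, and the fact that a normal $2$-isometry is unitary \cite[Theorem~3.4]{Ja-St} — and then quotes Putnam. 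The missing step cannot be finessed, since it fails without hyponormality: the bilateral weighted shift with weights $w_n=1$ for $n<0$ and $w_n=\tfrac12$ for $n\ge 0$ is an invertible, completely non-unitary contraction (even of class $C_{0\cdot}$), yet $\|W^{*n}e_k\|=1$ for all $n$ whenever $k\le 0$, so it is not of class $C_{\cdot 0}$ and has no unitary summand. So as written, your proposal proves (ii), (iii) and the kernel-condition case of (i), but the general assertion (i) remains open unless you supply a proof of, or a reference to, Putnam's theorem.
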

   \begin{proof}
First, observe that if $T'$ is of class $C_{0 \cdot}$
or of class $C_{\cdot 0}$, then by Lemma~
\ref{Wiktorek}, $T$ is completely non-unitary. Note
also that the same conclusion holds if $G_T(\{1\})=0.$
Indeed, otherwise there exists a nonzero closed vector
subspace $\mathcal M$ of $\hh$ reducing $T$ to a
unitary operator. Then $T^*T = I$ on $\mathcal M$ and
thus $1$ is in the point spectrum of $T^*T$, which
implies that $G_T(\{1\}) \neq 0$, a contradiction.
These two observations show that there is no loss of
generality in assuming that $T$ is completely
non-unitary.

(i) It is enough to prove that $T'$ is of class
$C_{\cdot 0}$ (under the assumption that $T$ is
completely non-unitary). Using \eqref{orthsum}, the
well-known identity $(T')'=T$ (which holds for any
left-invertible operator $T$) and observing that the
Cauchy dual operator of a left-invertible normal
operator is normal and a normal $2$-isometry is
unitary (see \cite[Theorem~ 3.4]{Ja-St}), one can
deduce from \eqref{2hypcon} that $T$ is a pure and
hyponormal contraction. Since, according to
\cite[Theorem~ 3]{Put}, a pure and hyponormal
contraction is of class $C_{\cdot 0},$ we are done.

(ii)\&(iii) Assume that $T$ satisfies the kernel
condition. In view of Theorem~ \ref{Zak1}, we may
further assume that $T=W,$ where $W$ is as in (iv) of
this theorem. Using Lemma~ \ref{convcd}(i), we deduce
that $W'$ is of class $C_{0\cdot}$ if and only if
$G_W(\{1\})=0.$ We will show that
   \begin{align} \label{wprim-4}
\text{$G_W(\{1\})=0$ if and only if $E(\{1\})=0.$}
   \end{align}
Set $\eta=\sup(\supp{E})$. Note that $\eta \in [1,
\infty)$. It follows from \eqref{aopws2} and
\eqref{wagi} that
   \begin{align} \label{wprim-2}
W^{*}W = \bigoplus_{j=0}^{\infty} \int_{[1,\eta]}
\phi_{j}(x) E(dx),
   \end{align}
where $\phi_{j}\colon [1,\eta] \to \rbb_+$ is given by
$\phi_{j}(x)=\xi_j(x)^2$ for $x\in [1,\eta]$ and $j\in
\zbb_+$. By Lemma~ \ref{xin11}, $1 \Le \phi_j \Le
\eta^2$ for all $j\in \zbb_+$. This together with
\eqref{wprim-2}, \cite[Theorem~ 5.4.10]{b-s} and the
uniqueness part of the spectral theorem implies that
   \begin{align*}
G_W(\varDelta) = \bigoplus_{j=0}^{\infty}
E(\phi_{j}^{-1}(\varDelta)), \quad \varDelta \in
\borel{[1,\eta^2]}.
   \end{align*}
Since $\phi_{j}^{-1}(\{1\}) = \{1\}$ for all $j\in
\zbb_+$, we conclude that \eqref{wprim-4} holds. This
together with (i) completes the proof.
   \end{proof}
   \begin{remark}
According to \cite[Theorem~ 3.1]{Ch-1}, all positive
integer powers $T'^n$ of the Cauchy dual operator $T'$
of a $2$-hyperexpansive operator $T\in \B(\hh)$ are
hyponormal. This immediately implies that if
$T\in\B(\hh)$ is a $2$-hyperexpansive operator such
that $T'$ is of class $C_{0\cdot}$, then $T'$ is of
class $C_{0 0}.$ \hfill $\diamondsuit$
   \end{remark}
Regarding Theorem~ \ref{coo}, note that there exist
completely non-unitary $2$-isome\-tries satisfying the
kernel condition whose Cauchy dual operators are not
of class $C_{0\cdot}.$ To see this, consider a nonzero
Hilbert space $\mathcal M$ and a compactly supported
$\B(\mathcal M)$-valued Borel spectral measure $E$ on
the interval $[1,\infty)$ such that $E(\{1\}) \neq 0$.
Then, by Theorems~ \ref{Zak1} and \ref{coo}(ii), the
operator valued unilateral weighted shift $W$ on
$\ell^2_{\mathcal M}$ with weights
$\{W_n\}_{n=0}^{\infty}$ defined by \eqref{wagi} has
all the required properties.

The following proposition shows that unlike the case
of $2$-isometries satisfying the kernel condition, the
Cauchy dual operator of a quasi-Brownian isometry is
never of class $C_{0.}$ (see also Lemma~
\ref{convcd}(ii)).
   \begin{proposition} \label{coo-qB} Let $T\in
\B(\hh)$ be a $2$-isometry and let $T'$ be its Cauchy
dual operator. Then the following assertions hold{\em
:}
   \begin{enumerate}
   \item[(i)] if $T$ is a quasi-Brownian
isometry, then for every $n\in \zbb_+$,
   \begin{align} \label{coo-qBw}
\|T^{\prime n} f\|^2 \Ge c_n \|f\|^2, \quad f\in \hh,
   \end{align}
where $c_n=\frac{1+\|T\|^{2(1-2n)}}{1+\|T\|^2}$ is the
largest constant for which \eqref{coo-qBw} holds; in
particular, $T'$ is not of class $C_{0.}$ and
$\|T'\|=1,$
   \item[(ii)] if $T$ satisfies
the kernel condition, then for every $n\in \zbb_+$,
   \begin{align} \label{coo-kcw}
\|T^{\prime n} f\|^2 \Ge c_n \|f\|^2, \quad f\in \hh,
   \end{align}
where $c_n=\frac{1}{1+n(\|T\|^2-1)}$ is the largest
constant for which \eqref{coo-kcw} holds.
   \end{enumerate}
   \end{proposition}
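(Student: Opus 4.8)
The plan is to identify, in each case, the positive operator $T'^{*n}T'^{n}$ (or at least its smallest spectral value), because the largest constant $c_n$ for which $\|T'^{n}f\|^2\ge c_n\|f\|^2$ holds for all $f$ is precisely $\inf_{\|f\|=1}\langle T'^{*n}T'^{n}f,f\rangle=\min\sigma(T'^{*n}T'^{n})$. Throughout I would use three general facts about a $2$-isometry $T$: it is left-invertible with $\Delta:=T^*T-I\ge 0$; a standard computation from $T^*\Delta T=\Delta$ gives $T^{*n}T^{n}=I+n\Delta$, so that $\|T^{n}\|^{2}=\|I+n\Delta\|=1+n(\|T\|^{2}-1)$; and $T'$ is a contraction by \eqref{2hypcon}.

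For part (ii) the key claim is the operator identity $T'^{*n}T'^{n}=(T^{*n}T^{n})^{-1}$, which is special to the kernel condition. To prove it I would split $T=N\oplus P$ into its normal and pure parts: $N$ is a normal $2$-isometry, hence unitary, so $N'=N$ and it contributes the identity to both sides, while $P$ inherits the kernel condition (the defect space of $T$ sits in the pure summand) and is pure, so by Theorem~\ref{Zak1} it is unitarily equivalent to the model shift $W$ on $\ell^2_{\mathcal M}$ with commuting weights $W_m=\int\xi_m\,\mathrm dE$. Since these weights are functions of a single spectral measure, $W'$ is the weighted shift with weights $W_m^{-1}$, and a direct computation of the block-diagonal operators gives $(W^{*n}W^{n}h)_m=(W_{m+n-1}\cdots W_m)^{2}h_m$ and $(W'^{*n}W'^{n}h)_m=(W_{m+n-1}\cdots W_m)^{-2}h_m$, the telescoping identity $\prod_{j=0}^{k-1}\xi_j(x)^2=1+k(x^2-1)$ making both explicit. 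Hence $W'^{*n}W'^{n}=(W^{*n}W^{n})^{-1}$, and the claim follows. Consequently $c_n=\min\sigma\big((T^{*n}T^{n})^{-1}\big)=1/\|T^{*n}T^{n}\|=1/\big(1+n(\|T\|^{2}-1)\big)$, the minimum being attained at the top of $\sigma(T^*T)$.

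Part (i) is the substantive computation. From the quasi-Brownian relation $\Delta T=\Delta^{1/2}T\Delta^{1/2}$, writing $D=\Delta^{1/2}$ so that $D^2T=DTD$, I would first extract two intertwinings: applied to $f\in\ker\Delta$ the relation gives $\Delta(Tf)=0$, so $T(\ker\Delta)\subseteq\ker\Delta$; and combining $D^2T=DTD$ with $T^*\Delta T=\Delta$ yields $(T^*DT-D)D=0$, hence $T^*DT=D$ on $\overline{\ran D}$, while on $\ker D=\ker\Delta$ one has $DTf=0$, so $T^*DT=D$ everywhere. Iterating $D^{k}T=DTD^{k-1}$ then gives $T^*D^{k}T=D^{k}$ for all $k\ge 1$, whereas $T^*I\,T=I+\Delta$. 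Splitting $g(\Delta)=g(0)I+\big(g(\Delta)-g(0)I\big)$ and approximating the second term (which vanishes at $0$) by polynomials in $\Delta$ without constant term, this upgrades to the master identity
\[
T^*g(\Delta)\,T=g(\Delta)+g(0)\,\Delta \qquad\text{for bounded Borel } g .
\]
With $P=I+\Delta$ and $Q_n:=T'^{*n}T'^{n}=P^{-1}T^*Q_{n-1}TP^{-1}$, an induction using the master identity shows $Q_n=g_n(\Delta)$; since one checks $g_n(0)=1$ for every $n$, the recursion collapses to $g_n(t)=\big(g_{n-1}(t)+t\big)/(1+t)^2$ with $g_0\equiv 1$.

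Solving this recursion, conveniently in the variable $s=1+t$, gives the closed form
\[
g_n(t)\big|_{s=1+t}=\frac{1+s^{\,1-2n}}{1+s},
\]
which is decreasing in $s$, so its minimum over $\sigma(T^*T)\subseteq[1,\|T\|^{2}]$ is attained at $s=\|T\|^{2}$, yielding $c_n=\big(1+\|T\|^{2(1-2n)}\big)/\big(1+\|T\|^{2}\big)$ as the best constant. Finally, since $c_n\to 1/(1+\|T\|^{2})>0$, no nonzero $f$ satisfies $T'^{n}f\to 0$, so $T'$ is not of class $C_{0\cdot}$; being a contraction that is not $C_{0\cdot}$, it must have $\|T'\|=1$. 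I expect the main obstacle to be establishing the master identity $T^*g(\Delta)T=g(\Delta)+g(0)\Delta$ rigorously, in particular controlling the behaviour on $\ker\Delta$ when passing from powers of $D$ to arbitrary Borel functions, together with recognizing and verifying the closed form of the recursion; by comparison, checking that $s=\|T\|^{2}$ realizes the extremum (so that $c_n$ is sharp), and the analogous attainment in part (ii), is routine.
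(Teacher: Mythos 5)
Your proposal is correct, but it takes a genuinely different---and much more self-contained---route than the paper. The paper's proof of both parts is essentially two lines long: it quotes from the companion paper \cite{A-C-J-S} the explicit operator identities behind everything, namely Theorem~4.5 there for quasi-Brownian isometries (which gives $(T'^{*n}T'^{n})^{-1}=s_n(T^*T)$ with $s_n(x)=\frac{1+x}{1+x^{1-2n}}$) and Theorem~3.3 there for the kernel condition (which gives $T'^{*n}T'^{n}=(T^{*n}T^{n})^{-1}$), and then computes the optimal constant exactly as you do, via $\hat c_n = 1/\|(T'^{*n}T'^{n})^{-1}\| = 1/s_n\big(\sup\sigma(T^*T)\big)=1/s_n(\|T\|^2)$, using monotonicity of $s_n$ and the functional calculus. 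What you do instead is reprove both identities from scratch: in (ii) by splitting off the unitary part and computing $W'^{*n}W'^{n}=(W^{*n}W^{n})^{-1}$ directly on the model of Theorem~\ref{Zak1}, and in (i) by extracting $T^*\Delta^{k/2}T=\Delta^{k/2}$ from the quasi-Brownian relation, upgrading this to $T^*g(\Delta)T=g(\Delta)+g(0)\Delta$, and solving the recursion $g_n(t)=\big(g_{n-1}(t)+t\big)/(1+t)^2$; your closed form $\frac{1+s^{1-2n}}{1+s}$ with $s=1+t$ is precisely the reciprocal of the paper's $s_n$, so the two computations agree. I checked your intermediate steps (the identity $(T^*DT-D)D=0$, the invariance of $\ker\Delta$ under $T$, the induction $D^kT=DTD^{k-1}$, the telescoping product of the $\xi_j$, and the solution of the recursion) and they are all sound. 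Two small remarks: the master identity is only ever applied to the continuous (indeed rational) functions $g_n$, so the worry you flag about general Borel $g$ is moot---approximating $g-g(0)$ uniformly on $\sigma(\Delta)\cup\{0\}$ by polynomials vanishing at $0$ suffices; and your sharpness claim rests (correctly) on $\|T\|^2=\|T^*T\|\in\sigma(T^*T)$ together with the spectral mapping theorem. In sum, the paper's proof buys brevity by leaning on the companion paper, while yours buys self-containment: it in effect reproves the relevant parts of \cite[Theorems 3.3 and 4.5]{A-C-J-S} within the toolkit of the present paper.
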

   \begin{proof}
(i) Fix $n\in \zbb_+.$ Note that $T^{\prime n}$ is
left-invertible. Denote by $\hat c_n$ the largest
positive constant for which \eqref{coo-qBw} holds.
Define $s_n\colon [1,\infty) \to (0,\infty)$ by
   \begin{align*}
s_n(x) = \frac{1+x}{1+{x^{1-2n}}}, \quad x \in [1,
\infty).
   \end{align*}
Using \cite[Theorem~ 4.5]{A-C-J-S}, the fact that
$\sigma(T^*T) \subseteq [1,\infty)$ and the functional
calculus (see \cite[Theorem~ VIII.2.6]{Con}), we
deduce that
   \begin{gather*}
\hat c_n = \frac{1}{\|(T'^{*n}T'^n)^{-1}\|} =
\frac{1}{\|s_n(T^*T)\|} = \frac{1}{\sup_{x\in
\sigma(T^*T)}s_n(x)}
   \\
= \frac{1}{s_n(\sup \sigma(T^*T))} =
\frac{1}{s_n(\|T\|^2)}.
   \end{gather*}
Due to \eqref{2hypcon}, the ``in particular'' part of
(i) is now clear.

(ii) Argue as in (i) using \cite[Theorem~
3.3]{A-C-J-S} in place of \cite[Theorem~
4.5]{A-C-J-S}.
   \end{proof}
As a direct consequence of Proposition~ \ref{coo-qB}
and the fact that $\|T\|\Ge 1$ for any $2$-isometry
$T$ (see \cite[Lemma~ 1]{R-0}), we get
   \begin{align*}
\lim_{n\to \infty} c_n =
   \begin{cases}
0 & \text{if $T$ is a $2$-isometry satisfying
\eqref{kc} and } \|T\|\neq 1,
   \\
\frac{1}{1+\|T\|^2} & \text{if $T$ is a quasi-Brownian
isometry and } \|T\|\neq 1.
   \end{cases}
   \end{align*}
{\bf Acknowledgments.} A part of this paper was
written while the second author visited Jagiellonian
University in Summer of 2018. He wishes to thank the
faculty and the administration of this unit for their
warm hospitality.
   \bibliographystyle{amsplain}
   
   \end{document}